\begin{document}
\title{\bf A Remark on Gromov-Witten Invariants of Quintic Threefold}
\author{\sl Longting Wu\footnote{longtingw@pku.edu.cn}\\[3pt]
            Beijing International Center for Mathematical Research,\\
            Peking University}
\date{}
\maketitle
\begin{abstract}
The purpose of the article is to give a proof of a conjecture of Maulik and Pandharipande for genus 2 and 3. As a result, it gives a way to determine Gromov-Witten invariants of the quintic threefold for genus 2 and 3.
\end{abstract}

\setcounter{tocdepth}{2}
\renewcommand*{\contentsname}{\centerline{Contents}}
\tableofcontents

\numberwithin{equation}{section}

\newtheorem{theorem}{Theorem}[section]
\newtheorem{conjecture}[theorem]{Conjecture}
\newtheorem{lemma}[theorem]{Lemma}
\newtheorem{corollary}[theorem]{Corollary}
\newtheorem{example}{Example}
\newtheorem{definition}[theorem]{Definition}
\newtheorem{remark}[theorem]{Remark}

\section{Introduction}
Let $Q$ be the quintic threefold in $\mathbb{P}^4$. $\mathbb{P}(N_{Q/\mathbb{P}^4}\oplus \mathcal{O}_Q) $ is the projective bundle associated to the vector bundle $N_{Q/\mathbb{P}^4}\oplus \mathcal{O}_Q $ over $Q$. $D_0$ is a divisor of $\mathbb{P}(N_{Q/\mathbb{P}^4}\oplus \mathcal{O}_Q)$ determined by the factor $N_{Q/\mathbb{P}^4}$.

Gathmann \cite{Ga1} used relative virtual localization technique to reduce some relative Gromov-Witten invariants of the pair $(\mathbb{P}(N_{Q/\mathbb{P}^4}\oplus \mathcal{O}_Q),D_0)$ to the absolute Gromov-Witten invariants of $Q$ when genus $g\leq 1$. Combining it with degeneration formula (\ref{degfm}), which relates Gromov-Witten invariants of $\mathbb{P}^4$ to relative invariants of the pairs $(\mathbb{P}^4,Q)$ and $(\mathbb{P}(N_{Q/\mathbb{P}^4}\oplus \mathcal{O}_Q),D_0)$, he could recursively determine Gromov-Witten invariants of the quintic threefold $N_{g,d}$ (\ref{def-Q}) for genus $g\leq 1$. For a discussion of the history of computing Gromov-Witten invariants of quintic threefold, we recommend the reader to see \cite{Li3}\cite{Liu}.

Later, Maulik and Pandharipande have found an algorithm (see \cite{MP}, Theorem 1) to determine relative invariants of the pair $(\mathbb{P}(N_{Q/\mathbb{P}^4}\oplus \mathcal{O}_Q),D_0)$ from the absolute invariants of $Q$ without the constraint of genus. Inspired by Gathmann's proposal, they proposed the following conjecture:

\begin{conjecture}[\cite{MP}]\label{conj}
The system of equations obtained from the degeneration formula (\ref{degfm}) (set $(V,W)=(\mathbb{P}^4,Q)$ in the formula) and the Maulik-Pandharipande's algorithm (see Section \ref{algm} or \cite{MP}, Theorem 1) can be used to determine both the relative theory of the pair $(\mathbb{P}^4,Q)$ and the Gromov-Witten invariants $N_{g,d}$ of $Q$.
\end{conjecture}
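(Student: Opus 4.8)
The plan is to prove solvability by induction on the degree $d$, treating the genus $g\in\{2,3\}$ as fixed and using the genus $\le 1$ theory of Gathmann as the base input. First I would write out the degeneration formula (\ref{degfm}) for the (computable) absolute descendent invariants of $\mathbb{P}^4$ in class $d[\mathrm{line}]$ and genus $g$. Each such invariant becomes a finite sum over splittings of the genus and of the degree $d=d_1+d_2$, and over cohomology-weighted relative partitions $\mu$ of the contact order along $Q$, pairing a relative invariant of $(\mathbb{P}^4,Q)$ with a relative invariant of the bundle pair $(\mathbb{P}(N_{Q/\mathbb{P}^4}\oplus\mathcal{O}_Q),D_0)$. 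I would then substitute the Maulik--Pandharipande algorithm to rewrite every bundle-side factor as a universal expression in the absolute invariants $N_{g',d'}$ of $Q$ with $g'\le g$ and $d'\le d$.

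The key observation is that the two families of unknowns---the relative invariants of $(\mathbb{P}^4,Q)$ and the numbers $N_{g,d}$---enter the resulting equations through disjoint ``leading'' channels. When the bundle side carries minimal base degree, the bundle factor is elementary and a Gathmann-type leading-term analysis, extended to genus $g$, isolates a single previously unseen relative invariant of $(\mathbb{P}^4,Q)$, all other relative contributions having strictly smaller degree or a coarser partition and hence being known by induction. When instead the bundle side carries all the base degree $d$, the dominant contribution comes from curves lying in the complementary section $D_\infty$ (the section determined by $\mathcal{O}_Q$, again isomorphic to $Q$); the Maulik--Pandharipande reduction of this contribution produces $N_{g,d}$ with an explicit nonzero coefficient, the remaining terms once more lying in the inductively known range.

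With these two channels identified, I would order all unknowns (by degree, then genus, then the refinement of $\mu$ and the descendent weight) and verify that the full system is triangular with invertible diagonal: each distinguished equation determines exactly one new relative invariant of $(\mathbb{P}^4,Q)$ or one new value $N_{g,d}$ in terms of the known absolute $\mathbb{P}^4$ invariants and of quantities already solved for. Running the induction to its conclusion then determines the entire relative theory of $(\mathbb{P}^4,Q)$ together with all $N_{g,d}$ for $g\le 3$, which is the assertion of the conjecture.

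The hard part is establishing the triangularity and the nonvanishing of the leading coefficients in genus $2$ and $3$. Unlike the genus $\le 1$ situation, the relative invariants of $(\mathbb{P}^4,Q)$ proliferate in higher genus---more descendent insertions and finer cohomology-weighted partitions---so the degeneration formula and the Maulik--Pandharipande algorithm may not by themselves isolate a unique new unknown in each equation. To close the induction I expect to need the low-genus tautological relations on $\overline{M}_{g,n}$ (such as Getzler's genus-$2$ relation and the Belorousski--Pandharipande relation, together with their genus-$3$ counterparts) and the string, dilaton, and divisor equations, used to reduce the set of independent relative invariants so that the degeneration and Maulik--Pandharipande equations become exactly enough to force the system to be triangular. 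It is precisely the availability of these universal relations for $g=2,3$, and their absence in higher genus, that confines the argument to these two cases.
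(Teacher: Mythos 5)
Your overall skeleton (degeneration formula for descendent invariants of $\mathbb{P}^4$, Maulik--Pandharipande reduction of the bundle side to $N_{g',d'}$, recursion on $(g,d)$) matches the paper, but the central mechanism is missing and the step you flag as ``the hard part'' is resolved by the wrong tool. For $g\geq 2$ no single degeneration equation isolates one new unknown: every equation $A_{g,d}(\zeta)=\sum_{\varrho\in S_{g,d}'}R_{g,d}(\varrho)\mathcal{F}(\varrho|\zeta)+B_{g,d}(\zeta)+NPT$ involves the \emph{whole} family of principal relative invariants $R_{g,d}(\varrho)$ of $(\mathbb{P}^4,Q)$ together with $N_{g,d}$ (hidden inside $B_{g,d}(\zeta)$), so your ``two disjoint leading channels'' picture does not hold. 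The paper's key idea is to \emph{eliminate} all the $R_{g,d}(\varrho)$ at once: the matrix of genus-$0$ fiber invariants $\mathcal{F}(\varrho|\rho)$, $\varrho,\rho\in S_{g,d}'$, is upper triangular with nonzero diagonal (Lemma \ref{discon-fbv}), so one solves (\ref{sol-rel}) for coefficients $C_\rho$ and forms the combination (\ref{key-rel}) in which the principal $(\mathbb{P}^4,Q)$ terms cancel. The relative theory of $(\mathbb{P}^4,Q)$ is then recovered only \emph{after} $N_{g,d}$ is known, via the Maulik--Pandharipande lower-triangular system (Lemma \ref{rl-ab}); it is not solved for in parallel.

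The second gap is the nonvanishing of the coefficient of $N_{g,d}$. After elimination one must choose $\zeta_{g,d}\in S_{g,d}\setminus S_{g,d}'$ (for $\zeta\in S_{g,d}'$ the combination is identically zero, as shown in Section \ref{section-4}), and then prove that $B_{g,d}(\zeta_{g,d})-\sum_\rho B_{g,d}(\rho)C_\rho=C_{g,d}N_{g,d}$ with $C_{g,d}\neq 0$. This is done by explicit relative virtual localization on $(Y,D_0)$ combined with Gathmann's virtual pushforward property (Lemmas \ref{vpfp-1}, \ref{vpfp-2}, \ref{typeI-nf1}--\ref{typeI-nf3}), yielding e.g.\ $C_{3,d}=24(5d-3)(5d-4)$. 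Getzler's genus-$2$ relation and the Belorousski--Pandharipande relation play no role anywhere, and your diagnosis of why the argument stops at $g=3$ is incorrect: the obstruction for $g\geq 4$ is combinatorial --- $\zeta_{g,d}\notin S_{g,d}'$ forces at least $g$ insertions and hence $d\geq(2g-1)/5$, which already fails at $d=1$ --- not the unavailability of tautological relations. As written, your induction cannot close because you have no device to decouple the $R_{g,d}(\varrho)$ from $N_{g,d}$ and no computation certifying that the coefficient of $N_{g,d}$ is nonzero.
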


\begin{remark}
Conjecture \ref{conj} for $g=0,1$ directly follows from the idea of Gathmann. Maulik and Pandharipande have claimed in their paper that they have proven Conjecture \ref{conj} for genus 2, but they did not give a proof.
\end{remark}

In this paper, we prove that
\begin{theorem}\label{thm-2}
The Conjecture \ref{conj} is true for $g=2,3$.
\end{theorem}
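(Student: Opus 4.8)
The plan is to read Conjecture \ref{conj} as the assertion that a large system of equations — indexed by the Gromov--Witten invariants of $\mathbb{P}^4$ together with their tangency refinements along $Q$ — has a unique solution in two families of unknowns: the relative invariants of the pair $(\mathbb{P}^4,Q)$ and the absolute invariants $N_{g,d}$. I would organize the argument as a triple induction: an outer induction on the genus $g$, with $g=0,1$ supplied by Gathmann's work \cite{Ga1} as the base case; a middle induction on the degree $d$; and an inner induction on a complexity parameter of the relative invariant (say the number of relative markings together with the total insertion degree). At each stage the aim is to produce one instance of the degeneration formula (\ref{degfm}) in which, after all strictly-lower data have been substituted, exactly one new unknown survives with an invertible coefficient.

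Concretely, I would first record how the two tools feed the system. For fixed class and insertions the left-hand side of (\ref{degfm}) is a Gromov--Witten invariant of $\mathbb{P}^4$, which is known; its right-hand side is a sum over cohomology-weighted partitions $\mu$ (with $|\mu|=5d$ forced by $[Q]=5H$) of products $\langle\,\cdots\mid\mu\,\rangle^{(\mathbb{P}^4,Q)}\cdot\langle\,\mu\mid\cdots\,\rangle^{(P,D_0)}$, where $P=\mathbb{P}(N_{Q/\mathbb{P}^4}\oplus\mathcal{O}_Q)$. By the Maulik--Pandharipande algorithm (Section \ref{algm}) every cap factor $\langle\,\mu\mid\cdots\,\rangle^{(P,D_0)}$ is a universal polynomial in the $N_{g',d'}$ with $g'\le g$ and $d'\le d$, so each equation becomes a relation purely among relative invariants of $(\mathbb{P}^4,Q)$ and the $N$'s. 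I would then isolate the unknowns by a leading-term analysis in the spirit of Gathmann: raising the contact order at a relative marking gives a recursion expressing a higher-tangency relative invariant of $(\mathbb{P}^4,Q)$ through a lower-tangency one, corrected by boundary terms in which a subcurve is absorbed into $Q$. The extreme case of a single relative point of maximal contact $5d$ is where the correction is governed by curves lying in $Q$, and this is exactly the equation carrying $N_{g,d}$. I would check that there $N_{g,d}$ occurs with a nonzero combinatorial coefficient while every remaining term is a known $\mathbb{P}^4$ invariant, a cap factor already expressed in lower $N$'s, or a relative $(\mathbb{P}^4,Q)$ invariant of strictly smaller complexity already solved for; this yields both the solvability of the relative theory and the recursion determining $N_{g,d}$.

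The hard part is the bookkeeping of the same-genus, same-degree boundary contributions that first become unavoidable at $g=2$. When a genus-$h$ subcurve is contracted into $Q$ while the remaining genus $g-h$ carries the full degree on the $\mathbb{P}^4$-side, the resulting term couples a Hodge-type integral over $\overline{M}_{h}$ (twisted by $N_{Q/\mathbb{P}^4}$ and the obstruction bundle) to a relative invariant of genus exactly $g$. I would argue that for $g=2,3$ only finitely many such splitting types occur, that each is either determined by the inductive hypothesis on $d$ or enters the isolating equation through the same coefficient that multiplies $N_{g,d}$, and that the accompanying $\overline{M}_{h}$ integrals are precisely the explicit universal constants already appearing in the Maulik--Pandharipande algorithm. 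Verifying that these contributions do not spoil the triangular structure — equivalently, that the leading coefficient matrix of the relevant block stays nonsingular for $g\le 3$ — is the crux of the proof, and it is exactly this non-degeneracy that is not yet controlled beyond $g=3$.
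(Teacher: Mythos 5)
Your overall skeleton---feed the degeneration formula with Gathmann-type tangency insertions, convert the $(Y,D_0)$ cap factors into lower $N_{g',d'}$ via the Maulik--Pandharipande algorithm, and isolate $N_{g,d}$ from one distinguished equation---matches the paper's strategy, but the proposal has two genuine gaps at exactly the points where genus $2,3$ differ from genus $0,1$.

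First, the isolating equation you propose (a single relative point of maximal contact) cannot work for $g\geq 2$. The dimension constraint forces a single tangency insertion to have the form $\{(5d+1-g-m,m)\}$, which for $g\geq 2$ always satisfies $\sum(a_i+1)\leq 5d$, i.e.\ lies in $S_{g,d}'$; as explained in Section \ref{section-4}, for such data the principal combination degenerates to the trivial identity $0=0$ and carries no information about $N_{g,d}$. One needs at least $g$ tangency insertions (the paper uses $\zeta_{2,d}=\{(5d-2,0),(1,0)\}$ and $\zeta_{3,d}=\{(5d-4,0),(1,0),(1,0)\}$), and correspondingly the degeneration formula produces disconnected fiber contributions $\mathcal{F}(\varrho|\zeta)$ spread over several rational fiber components. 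Controlling these (Lemmas \ref{typeI-g0} and \ref{discon-fbv}) is what makes the elimination of the unknown relative $(\mathbb{P}^4,Q)$ invariants possible: the matrix $\big(\mathcal{F}(\varrho|\rho)\big)$ is upper triangular with nonzero diagonal, so all of them are cancelled by one explicit linear combination (\ref{sol-rel}), rather than solved for one at a time by your inner induction on complexity; the relative theory of $(\mathbb{P}^4,Q)$ is recovered only afterwards, from Lemma \ref{rl-ab}.

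Second, you correctly name the crux---non-degeneracy of the coefficient multiplying $N_{g,d}$---but give no method to establish it; the claim that the accompanying $\overline{\mathcal{M}}_h$ integrals are ``precisely the explicit universal constants already appearing in the Maulik--Pandharipande algorithm'' is an assertion, not an argument, and the difficulty is not located on the $(\mathbb{P}^4,Q)$ side at all. In the paper the coefficient is computed entirely on the $(Y,D_0)$ side: the multi-point invariants $B_{g,d}(\zeta)$ with $|\zeta|=2,3$ are shown to be scalar multiples of $N_{g,d}$ via the virtual pushforward properties (Lemmas \ref{vpfp-1}, \ref{vpfp-2}), and the scalars are then evaluated by relative virtual localization (Lemmas \ref{typeI-nf1}, \ref{typeI-nf2}, \ref{typeI-nf3} and \ref{keytrick}), yielding the explicit nonzero values $4(5d-1)(5d-2)$ for $g=2$ and $24(5d-3)(5d-4)$ for $g=3$. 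Without an analogue of these computations your argument does not close.
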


As a consequence of Theorem \ref{thm-2}, it gives an algorithm to determine $N_{g,d}$ for $g=2,3$. Here, we do not claim any priority to the proof of Conjecture \ref{conj} for genus 2. We may owe it to Maulik and Pandharipande.

\begin{remark}
In the same paper \cite{MP}, Maulik and Pandharipande also gave a calculation scheme to determine all $N_{g,d}$, which is different from the method of Conjecture \ref{conj}. But they also remarked that the method provided by Conjecture \ref{conj} appears more suitable for calculation.
\end{remark}

Since Gromov-Witten invariants of $\mathbb{P}^4$ are known by several methods \cite{Ga2}\cite{Gi}\cite{GP}, we treat Gromov-Witten invariants of $\mathbb{P}^4$ as known constants in the proof of Theorem \ref{thm-2}.

Now our strategy to prove Theorem \ref{thm-2} can be summarized as follows.

We determine $N_{g,d}$ at first. It is well known that $N_{g,0}$ can be derived from Hodge integral in the moduli space of curves \cite{FP1}
\begin{equation}\label{inv-d0}
N_{g,0}=(-1)^g\frac{\chi(Q)}{2}\frac{|B_{2g}||B_{2g-2}|}{2g(2g-2)(2g-2)!},
\end{equation}
where $\chi(Q)$ is the Euler characteristic of $Q$ and $B_{2g}$ are Bernoulli numbers.

In order to explain our approach to determining $N_{g,d>0}$, we need to introduce the following notations.

\begin{definition}\label{maindf-1}
Let $S_{g,d}$ be a set, which consists of sets of integer pairs $\{(a_i,b_i)\}_{i=1}^r$ such that $(a_i,b_i)\in\mathbb{Z}_{\geq 0}\times\{0,1,2,3\}$, $(a_i,b_i)\neq (0,0)$ and $\sum(a_i+b_i)=5d+1-g$.
\end{definition}

Here, the number of integer pairs is not fixed.

\begin{definition}\label{maindf-2}
Let $S_{g,d}'$ be the subset of $S_{g,d}$, which we further require $\{(a_i,b_i)\}_{i=1}^r$ to satisfy
\[\sum(a_i+1)\leq 5d.\]
\end{definition}

In order to compute $N_{g,d>0}$, we firstly show by degeneration formula (\ref{degfm}) that

\begin{theorem}\label{mainthm-1}
For fixed $d>0$, $g\geq 0$ and $\zeta\in S_{g,d}$, there exist uniquely determined constants $C_{\rho}$ such that
\begin{equation}\label{mainrl-1}
A_{g,d}(\zeta)-\sum_{\rho\in S_{g,d}'}A_{g,d}(\rho)C_{\rho}=B_{g,d}(\zeta)-\sum_{\rho\in S_{g,d}'}B_{g,d}(\rho)C_{\rho}+NPT.
\end{equation}
\end{theorem}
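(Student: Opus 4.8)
The plan is to derive (\ref{mainrl-1}) from the degeneration formula (\ref{degfm}) together with a tangency-lowering recursion of Gathmann type for relative invariants of a smooth very ample divisor. First I would expand (\ref{degfm}) with $(V,W)=(\mathbb{P}^4,Q)$ and read off the index set: a relative datum is a collection of cohomology-weighted contact orders, and imposing the virtual dimension constraint together with the cohomology grading of $Q$ (recording in $b_i\in\{0,1,2,3\}$ the complex codimension of the relative insertion, with $a_i$ the reduced contact order) reproduces exactly the conditions defining $S_{g,d}$, namely $(a_i,b_i)\neq(0,0)$ and $\sum(a_i+b_i)=5d+1-g$. Under this dictionary $A_{g,d}(\zeta)$ and $B_{g,d}(\zeta)$ are the relative invariants attached to $\zeta$ on the two sides of the degeneration, and the auxiliary inequality $\sum(a_i+1)\leq 5d$ cutting out $S'_{g,d}$ is precisely the statement that the total contact order does not exceed the intersection number $5d$ of a degree $d$ curve with the quintic $Q$.

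The heart of the proof is the reduction of the excess-tangency conditions $\zeta\in S_{g,d}\setminus S'_{g,d}$, where $\sum(a_i+1)>5d$, to the admissible conditions in $S'_{g,d}$. Here I would invoke a Gathmann-type recursion (equivalently the Maulik--Pandharipande algorithm of Section \ref{algm}), which rewrites a relative invariant of excess contact order as a universal linear combination of invariants of lower contact order plus contributions of strictly smaller degree, smaller genus, or of boundary type. The crucial point is that the coefficients of this recursion are combinatorial: they depend only on the tangency profile $\zeta$ and not on the target pair, so the \emph{same} constants $C_\rho$ reduce the $A$-invariants and the $B$-invariants. Applying the recursion to both sides and cancelling the coincident principal parts leaves (\ref{mainrl-1}), with every contribution of strictly smaller degree, smaller genus, or boundary type swept into $NPT$.

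For the uniqueness of the $C_\rho$ I would order the conditions by the partial order induced by $\sum(a_i+1)$ and show the reduction is triangular with nonvanishing diagonal, so that the coefficients are pinned down one contact order at a time; equivalently the leading symbols of $\{A_{g,d}(\rho)\}_{\rho\in S'_{g,d}}$ are linearly independent and the solution is forced. The main obstacle I anticipate is the reduction step itself: identifying the correct tangency-lowering relation, proving its coefficients are genuinely target-independent so that they act identically on $A$ and $B$, and carrying out the bookkeeping that certifies which contributions are principal and which may be absorbed into $NPT$ without escaping the fixed pair $(g,d)$. Establishing that this reduction closes within $S_{g,d}$, rather than generating conditions outside it, is the delicate part on which the whole scheme rests.
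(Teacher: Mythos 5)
Your proposal misidentifies the mechanism that produces the constants $C_{\rho}$, and in doing so leaves the central step of the argument unproved. First, a point of bookkeeping that matters: $A_{g,d}(\zeta)$ is an \emph{absolute} Gromov--Witten invariant of $\mathbb{P}^4$ with insertions $H^{m_i}\prod_{k_i}(k_i\tau+\iota_*(Id))$, and $B_{g,d}(\zeta)$ is a type I relative invariant of $(Y,D_0)$; neither is ``the relative invariant attached to $\zeta$ on one side of the degeneration.'' The quantities that must be eliminated are the relative invariants $R_{g,d}(\varrho)=\langle\,|\eta\rangle^{\mathbb{P}^4,Q}_{g,d}$ of the pair $(\mathbb{P}^4,Q)$, which your proposal never isolates. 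The paper's proof expands the degeneration formula for $A_{g,d}(\zeta)$ and sorts the triples $(G_1^r,G_2^r,I)$ into three cases: (A) all genus and degree on the $(\mathbb{P}^4,Q)$ side, giving $\sum_{\varrho\in S'_{g,d}}R_{g,d}(\varrho)\mathcal{F}(\varrho|\zeta)$, where $\mathcal{F}(\varrho|\zeta)$ is an explicit product of genus-$0$ fiber invariants of $(Y,D_0)$ computed in Lemma \ref{typeI-g0}; (B) all genus and degree on the $(Y,D_0)$ side, giving exactly $B_{g,d}(\zeta)$; (C) everything else, which Lemma \ref{rl-ab} shows is determined by $N_{g',d'}$ with $(g',d')<(g,d)$ and hence goes into $NPT$. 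The constants $C_{\rho}$ are then \emph{defined} as the unique solution of the linear system $\mathcal{F}(\varrho|\zeta)=\sum_{\rho\in S'_{g,d}}\mathcal{F}(\varrho|\rho)C_{\rho}$, which is solvable because Lemma \ref{discon-fbv} shows the matrix $(\mathcal{F}(\varrho|\rho))$ is triangular with nonzero diagonal when $S'_{g,d}$ is ordered by the number of integer pairs. Subtracting $\sum_{\rho}C_{\rho}$ times the analogous identity for $A_{g,d}(\rho)$ then cancels the unknown $R_{g,d}(\varrho)$ term for term, and (\ref{mainrl-1}) drops out. There is no separate ``reduction'' applied to the $A$'s and to the $B$'s whose coefficients must be checked to agree: both appear in a single identity, so the cancellation is automatic once the $C_{\rho}$ satisfy (\ref{sol-rel}).

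The gap in your plan is therefore the appeal to a ``Gathmann-type tangency-lowering recursion with target-independent coefficients.'' No such recursion is available here in the form you need (its absence for $g\geq 2$ is precisely why the paper routes everything through the degeneration formula and the Maulik--Pandharipande algorithm rather than through Gathmann's method), and even if one existed, you would still have to prove that its coefficients coincide with the $C_{\rho}$ solving (\ref{sol-rel}) — a statement about explicit fiber integrals over $(Y,D_0)$, e.g.\ the factors $5^{h+1}\mu^{m-2}\binom{h+m-2}{m-2}$ of Lemma \ref{typeI-g0}, which are certainly not independent of the target. Your uniqueness argument is likewise aimed at the wrong object: the $C_{\rho}$ are pinned down by the nondegeneracy of the matrix $(\mathcal{F}(\varrho|\rho))$, not by any linear independence of the values $A_{g,d}(\rho)$ (which are just numbers and impose no constraint). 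To repair the proof you would need to (i) carry out the case analysis of the degeneration formula to isolate the terms $R_{g,d}(\varrho)\mathcal{F}(\varrho|\zeta)$ and $B_{g,d}(\zeta)$, (ii) compute the genus-$0$ fiber invariants entering $\mathcal{F}$, and (iii) establish the triangularity of Lemma \ref{discon-fbv}; none of these is present in the proposal.
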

Here, those $A_{g,d}$ are Gromov-Witten invariants of $\mathbb{P}^4$, those $B_{g,d}$ are relative Gromov-Witten invariants of the pair $(Y,D_0)$, and $NPT$ stands for non-principal terms in the degeneration formula, which can be determined by those $N_{g',d'}$ such that
\[g'<g,~d'\leq d~~~\text{or}~~~g'\leq g,~d'<d.\]

We refer to Theorem \ref{keythm} and its proof for more details of the notations
in Theorem \ref{mainthm-1}.

Next, we specialize to $g=2,3$. Combining relative virtual localization formula (\ref{lc-0}) and virtual push-forward properties of Lemmas \ref{vpfp-1} and \ref{vpfp-2}, we can show that
\begin{theorem}\label{mainthm-2}
For each pair $(g,d)$ with $g=2,3$ and $d>0$, there always exists $\zeta_{g,d}\in S_{g,d}$ such that term
\begin{equation}\label{mainrl-2}
B_{g,d}(\zeta_{g,d})-\sum_{\rho\in S_{g,d}'}B_{g,d}(\rho)C_{\rho}
\end{equation}
on the R.H.S of (\ref{mainrl-1}) equals to $C_{g,d}N_{g,d}$ where $C_{g,d}\neq 0$.
\end{theorem}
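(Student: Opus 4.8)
The plan is to analyze each relative invariant $B_{g,d}(\rho)$ appearing in (\ref{mainrl-2}) via the relative virtual localization formula (\ref{lc-0}) with respect to the fiberwise $\mathbb{C}^*$-action on the $\mathbb{P}^1$-bundle $Y=\mathbb{P}(N_{Q/\mathbb{P}^4}\oplus\mathcal{O}_Q)\to Q$. The torus fixes the two sections $D_0$ and $D_\infty$, both isomorphic to $Q$, so each $\mathbb{C}^*$-fixed relative stable map splits into a part mapping to the section $D_\infty$ and a rubber/expanded-degeneration part carrying the tangency conditions along $D_0$. Summing the graph contributions expresses $B_{g,d}(\rho)$ as a distinguished \emph{principal} term, in which the full genus $g$ and degree $d$ are concentrated on $D_\infty\cong Q$ and which is therefore a fixed equivariant Hodge-integral multiple of the absolute invariant $N_{g,d}$, together with a collection of \emph{lower} terms whose maps either drop the genus or degree on $Q$ (to be swept into $NPT$) or whose descendent data at the interior markings has strictly smaller weight. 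I would first make this decomposition precise and record, for each fixed locus, the exact coefficient of $N_{g,d}$ produced by the rubber integral.

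The next step is to use the virtual push-forward properties of Lemmas \ref{vpfp-1} and \ref{vpfp-2} to collapse the non-principal contributions. Pushing the relative moduli space forward along the projection $\pi\colon Y\to Q$ forces a dimension and weight drop on the rubber and comb contributions, so that each such term is itself either proportional to some $B_{g,d}(\rho)$ with $\rho\in S_{g,d}'$ of strictly smaller descendent weight, or is an $NPT$ term governed by $N_{g',d'}$ with lower $(g',d')$. This is exactly the bookkeeping for which the constraint $\sum(a_i+b_i)=5d+1-g$ defining $S_{g,d}$ and the subset condition $\sum(a_i+1)\le 5d$ defining $S_{g,d}'$ are tailored: they isolate the weight at which the principal $N_{g,d}$-term appears, while the genuinely smaller-weight terms land inside $S_{g,d}'$.

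With these two ingredients in hand, the idea is to choose the distinguished insertion $\zeta_{g,d}\in S_{g,d}$ for $g=2,3$ so that, in the combination $B_{g,d}(\zeta_{g,d})-\sum_{\rho\in S_{g,d}'}B_{g,d}(\rho)C_\rho$, every non-principal relative term cancels against the $C_\rho$-weighted contributions of the $\rho\in S_{g,d}'$, leaving only the principal term $C_{g,d}N_{g,d}$. Since the constants $C_\rho$ are precisely the ones furnished by Theorem \ref{mainthm-1}, the cancellation is not automatic: I would verify it graph by graph, matching the localization contribution of $\zeta_{g,d}$ against those of the $\rho$'s and using the recursive structure of the rubber integrals.

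The hard part will be the final nonvanishing $C_{g,d}\neq 0$. After all cancellations the coefficient $C_{g,d}$ is a universal constant, independent of $d$, built from the rubber Hodge integrals and the equivariant Euler class of the normal bundle of $D_\infty$ in $Y$. Establishing its nonvanishing reduces to evaluating a small number of explicit Hodge integrals in genus $2$ and genus $3$; this is where the genus restriction enters, since neither the clean nonvanishing of these integrals nor the existence of a single $\zeta_{g,d}$ achieving full cancellation is guaranteed for general $g$. I would therefore carry out these genus $2$ and $3$ evaluations explicitly and confirm $C_{g,d}\neq 0$.
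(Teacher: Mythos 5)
Your toolkit (relative virtual localization plus the virtual push-forward lemmas) is the right one, but the structure of your argument does not match how the statement actually gets proved, and two of your structural claims are wrong. First, you miss the decisive simplification: for any $\rho\in S_{g,d}$ the class being integrated over $\overline{\mathcal{M}}_{\Gamma}(Y,D_0)$ has dimension $0$, equal to the virtual dimension of $\overline{\mathcal{M}}_{g,0}(Q,d)$, so Lemma \ref{vpfp-2} applied to the projection-and-forget morphism immediately gives $B_{g,d}(\rho)=c_\rho N_{g,d}$ for a scalar $c_\rho$. Hence the combination $B_{g,d}(\zeta_{g,d})-\sum_\rho B_{g,d}(\rho)C_\rho$ is \emph{automatically} of the form $C_{g,d}N_{g,d}$; there are no ``non-principal relative terms'' to cancel graph by graph, and no $NPT$ terms arise on this side (those come from the degeneration formula in Theorem \ref{keythm}, not from localizing the $B$'s). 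Your taxonomy of the fixed loci is also off: the composite loci, where the genus-$g$, degree-$d$ part sits over $D_0$ as a rubber map, do \emph{not} drop genus or degree and do not reduce to other $B_{g,d}(\rho)$'s --- they contribute directly to the same scalar $c_\rho$ and are in fact the hardest part of the computation (cf.\ Lemmas \ref{typeI-nf3} and \ref{keytrick}, where they are evaluated by an auxiliary comparison of two ways of computing a second invariant).

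Second, the real content of the theorem is the choice of $\zeta_{g,d}$ and the explicit nonvanishing, and here your plan is missing the key criterion. If $\zeta_{g,d}\in S_{g,d}'$, the linear system (\ref{sol-rel}) is solved by $C_{\zeta_{g,d}}=1$ and $C_\rho=0$ otherwise, so the combination vanishes identically; one must take $\zeta_{g,d}\in S_{g,d}\setminus S_{g,d}'$ (the paper uses $\{(5d-2,0),(1,0)\}$ for $g=2$ and $\{(5d-4,0),(1,0),(1,0)\}$ for $g=3$), and it is exactly this constraint that forces $d\geq(2g-1)/5$ and obstructs the argument for $g\geq 4$, $d=1$. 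Finally, your claim that $C_{g,d}$ is a universal constant independent of $d$ is false: the computation yields $C_{2,d}=4(5d-1)(5d-2)$ and $C_{3,d}=24(5d-3)(5d-4)$, and the nonvanishing is checked from these explicit polynomials in $d$, not from an evaluation of genus-$2$ or genus-$3$ Hodge integrals (the Hodge-type classes $\mathrm{ch}_i(N_{g,m,d})$ that appear in the localization are killed by the vanishing Lemma \ref{TGW}, and the surviving terms are handled by string, dilaton and divisor equations).
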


For more details of Theorem \ref{keythm2},  we refer to Theorem \ref{mainthm-2} and its proof.

Since the proofs of Theorem \ref{mainthm-2} for $g=2,3$ are similar, we give a detailed proof for $g=3$ in Section \ref{partII} and a short proof for $g=2$ in Appendix A.

Combining (\ref{mainrl-1}) and (\ref{mainrl-2}), we can recursively determine $N_{g,d}$ for $g=2,3$ by
\begin{equation*}
N_{g,d}=\frac{1}{C_{g,d}}\big\{A_{g,d}(\zeta_{g,d})-\sum_{\rho\in S_{g,d}'}A_{g,d}(\rho)C_{\rho}-NPT\big\}.
\end{equation*}

Once $N_{g,d}$ are known, relative Gromov-Witten invariants of the pair $(\mathbb{P}^4, Q)$ can also be recursively determined by \cite{MP}, Theorem 2.

To show the effectiveness of our method, we give a computation of $N_{2,1}$ at Appendix D.

In order to prove Conjecture \ref{conj} for $g\geq 4$, we may try to generalize Theorem \ref{mainthm-2}. But we show in Section \ref{section-4} that this is impossible for all the pairs $(g,d)$. The key reason is that we need to choose $\zeta_{g,d}\in S_{g,d}\backslash S_{g,d}'$. Otherwise
\[B_{g,d}(\zeta_{g,d})-\sum_{\rho\in S_{g,d}'}B_{g,d}(\rho)C_{\rho}=0.\]
The constraint $\zeta_{g,d}\in S_{g,d}\backslash S_{g,d}'$ then requires that
\[d\geq \frac{2g-1}{5}.\]

So we can not generalize Theorem \ref{mainthm-2} for all the pairs $(g,d)$. But we conjecture that
\begin{conjecture}\label{maincj}
For each pair $(g,d)$ with $d\geq \frac{2g-1}{5}$, there always exists $\zeta_{g,d}\in S_{g,d}$ such that
\[B_{g,d}(\zeta_{g,d})-\sum_{\rho\in S_{g,d}'}B_{g,d}(\rho)C_{\rho}\]
equals to $C_{g,d}N_{g,d}$ with $C_{g,d}\neq 0$.
\end{conjecture}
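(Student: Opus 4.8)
The plan is to extend the mechanism behind Theorem~\ref{mainthm-2} from $g=2,3$ to arbitrary genus in the stated range, the only genuinely new input being a genus-uniform control of the rubber contributions. The first step is to exhibit an explicit candidate. I would take $\zeta_{g,d}$ to consist of $5d+1-g$ copies of the pair $(1,0)$. Then $\sum(a_i+b_i)=5d+1-g$, so $\zeta_{g,d}\in S_{g,d}$, while $\sum(a_i+1)=2(5d+1-g)$, which exceeds $5d$ exactly when $d\geq\frac{2g-1}{5}$; hence $\zeta_{g,d}\in S_{g,d}\backslash S_{g,d}'$ on precisely the conjectured range. This is the natural extremal choice: among all admissible pairs, $(1,0)$ minimizes the per-marking contribution to $\sum(a_i+b_i)$ while maximizing the quantity $r-\sum b_i$ that governs membership in $S_{g,d}\backslash S_{g,d}'$, so any proof must begin near this configuration.

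Next I would feed $\zeta_{g,d}$, together with each $\rho\in S_{g,d}'$ appearing in (\ref{mainrl-2}), into the relative virtual localization formula (\ref{lc-0}) for the $\mathbb{C}^*$-action scaling the fibers of $Y=\mathbb{P}(N_{Q/\mathbb{P}^4}\oplus\mathcal{O}_Q)$. The fixed loci split a stable map into a part supported on the two fixed sections (each $\cong Q$) and on fibers, glued to a rubber piece over $D_0$ that absorbs the prescribed contact orders. The target of the argument is to isolate a single \emph{distinguished} fixed locus -- a connected genus-$g$, degree-$d$ curve in $Q$ meeting the rubber through the maximal-contact configuration forced by the $(1,0)$ markings -- and to show that its contribution equals $C_{g,d}N_{g,d}$, with $C_{g,d}$ an explicit integral over $\overline{\mathcal{M}}_{g}$ built from the equivariant Euler classes of the Hodge bundle and of $N_{Q/\mathbb{P}^4}$.

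The third step is the bookkeeping. Every fixed locus other than the distinguished one must be shown to cancel, to be absorbed into $\sum_{\rho\in S_{g,d}'}B_{g,d}(\rho)C_\rho$, or to reduce to invariants $N_{g',d'}$ with $(g',d')$ strictly smaller (hence already non-principal in the sense of Theorem~\ref{mainthm-1}). This is exactly where the virtual push-forward properties of Lemmas~\ref{vpfp-1} and~\ref{vpfp-2} enter: they should annihilate the tautological ($\psi$- and $\lambda$-class) integrals on the rubber and on the quintic moduli whenever the insertion profile is that of an element of $S_{g,d}'$, and the uniqueness of the constants $C_\rho$ from Theorem~\ref{mainthm-1} is what forces the cancellation to close, leaving only the distinguished term $C_{g,d}N_{g,d}$.

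The hard part will be making the third step work uniformly in $g$, and then proving $C_{g,d}\neq 0$. For $g=2,3$ the tautological ring of the relevant moduli spaces and the set of localization graphs are small enough that Lemmas~\ref{vpfp-1} and~\ref{vpfp-2} clear away every unwanted term by a finite explicit check; as $g$ grows, both the number of graphs and the dimension of the space of rubber and Hodge integrals grow, and the vanishing one needs is no longer guaranteed by the low-genus push-forward lemmas. I expect the crux to be finding the correct genus-insensitive strengthening of the virtual push-forward principle, after which the non-vanishing of $C_{g,d}$ -- a single Hodge-type integral over $\overline{\mathcal{M}}_{g}$ coupled to the Euler classes above -- should follow from a closed-form evaluation or a degree/positivity argument rather than from any feature special to genus $2$ and $3$.
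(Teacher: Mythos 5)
This statement is left as an open conjecture in the paper: the paper proves it only for $g=2,3$ (Theorem \ref{keythm2}, via the explicit choices $\zeta_{2,d}=\{(5d-2,0),(1,0)\}$ and $\zeta_{3,d}=\{(5d-4,0),(1,0),(1,0)\}$ and the closed-form evaluations of Lemmas \ref{typeI-nf1}--\ref{typeI-nf3}), and in Section \ref{section-4} it only establishes the \emph{necessity} of the bound $d\geq\frac{2g-1}{5}$. There is no proof in the paper for you to match, and your proposal is not one either: it is a plan whose decisive step is explicitly deferred. The key point you should internalize is that, by Lemma \ref{vpfp-2}, \emph{every} $B_{g,d}(\rho)$ with $\rho\in S_{g,d}$ is automatically a scalar multiple of $N_{g,d}$ (the push-forward to $\overline{\mathcal{M}}_{g,0}(Q,d)$ lands in virtual dimension zero), so the entire content of the conjecture is the nonvanishing of the resulting coefficient $C_{g,d}$. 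Your third step, organized around showing that non-distinguished fixed loci "cancel" or get "absorbed into $NPT$", mislocates the difficulty: the terms $\sum_{\rho\in S_{g,d}'}B_{g,d}(\rho)C_{\rho}$ are principal, the combination is a multiple of $N_{g,d}$ for free, and no strengthening of the push-forward lemmas addresses whether that multiple is zero. Indeed the paper shows that for $\zeta_{g,d}\in S_{g,d}'$ the combination is identically zero, so nonvanishing is a genuinely delicate cancellation question, settled in the paper only by computing everything explicitly.

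Two further concrete problems. First, your candidate $\zeta_{g,d}=\{(1,0)^{5d+1-g}\}$ does lie in $S_{g,d}\backslash S_{g,d}'$ exactly when $d\geq\frac{2g-1}{5}$, but it is not "the configuration any proof must begin near": the paper's working choices have exactly $g$ pairs (one large pair plus $g-1$ copies of $(1,0)$), and the method depends on $|\zeta|$ being small so that the matrix $\mathcal{F}(\varrho|\rho)$ of Lemma \ref{discon-fbv} and the localization graph sum stay tractable; with $5d+1-g$ marked points both blow up combinatorially and there is no evidence the resulting $C_{g,d}$ is nonzero. Second, your description of the distinguished contribution as a Hodge-type integral over $\overline{\mathcal{M}}_{g}$ does not reflect the structure of the paper's computation: the twisted contributions $\mathrm{ch}_i(N_{g,m,d})$ with $i>0$ vanish by Lemma \ref{TGW}, and the surviving coefficient is assembled from $\psi$- and divisor-class push-forwards plus the edge combinatorics of Lemma \ref{comb-idt}, with no $\lambda$-classes appearing. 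What is missing, and what the paper itself does not supply, is any genus-uniform mechanism for evaluating $C_{g,d}$ or certifying $C_{g,d}\neq 0$.
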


If this conjecture is true, then we can recursively determine all $N_{g,d}$ from those $N_{g,d}$ with $0<d<\frac{2g-1}{5}$.

The paper is organized as follows. In Section \ref{section-2}, we give some necessary preliminaries for further discussion. In Section \ref{section-3}, we give a proof of Theorem \ref{thm-2}. In Section \ref{section-4}, we give a further remark on Conjecture \ref{conj} for $g>3$.

Finally, we mention that there are also two very different approaches to computing $N_{g\geq 2,d}$ by Chang-Li-Li-Liu \cite{CLLL2}\cite{CLLL1} and Guo-Janda-Ruan \cite{GJR} recently.

\textbf{Acknowledgement.} First, the author would like to thank my advisor Xiaobo Liu for suggesting to me this interesting problem and numerous valuable suggestions when writing this paper. Secondly, he thanks, Melissa Liu, Rahul Pandharipande, Davesh Maulik, Huai-Liang Chang, Jun Li and Wei-Ping Li for helpful conversations. Research of the author was partially supported by SRFDP grant 20120001110051.

\section{Preliminaries}\label{section-2}
\subsection{Absolute Gromov-Witten Invariants}
Let $V$ be a smooth projective variety.

We use $\overline{\mathcal{M}}_{g,m}(V,\beta)$ to denote the moduli space of stable maps from genus $g$, $m$-pointed curve $\Sigma$ to $V$ with curve class $\beta$.

The cotangent line of the $i$th marked point forms a line bundle $L_i$ on $\overline{\mathcal{M}}_{g,m}(V,\beta)$. We denote the first Chern class of $L_i$ as $\psi_i$. The evaluation map of the $i$th marked point on $\overline{\mathcal{M}}_{g,m}(V,\beta)$ is defined by
\begin{eqnarray*}
ev_i:  & \overline{\mathcal{M}}_{g,m}(V,\beta) &\longrightarrow  V\\
       & [(\Sigma,x,f)]& \longmapsto  f(x_i),
\end{eqnarray*}
where $[(\Sigma,x,f)]$ is an isomorphic class of the stable map.

Let $\theta_1,\theta_2,...,\theta_{n_V}$ be a basis of $H^*(V,\mathbb{Q})$. The Gromov-Witten invariants of $V$ are defined by
\begin{equation}\label{abGW}
\Big\langle\prod_{i=1}^m\tau_{k_i}(\theta_{l_i})\Big\rangle^{V}_{g,\beta}:=\int_{[\overline{\mathcal{M}}_{g,m}(V,\beta)]^{vir}}\prod_{i=1}^m\psi_i^{k_i}ev_i^*(\theta_{l_i})
\end{equation}
where $[\overline{\mathcal{M}}_{g,m}(V,\beta)]^{vir}$ is the virtual fundamental class of $\overline{\mathcal{M}}_{g,m}(V,\beta)$  with virtual dimension
\begin{equation}\label{abvirdim}
c_1(T_V)\cdot\beta+(1-g)(\text{dim}V-3)+m.
\end{equation}

To distinguish them to relative Gromov-Witten invariants, we call the above invariants absolute Gromov-Witten invariants.

\subsection{Relative Gromov-Witten Invariants}
The relative Gromov-Witten invariants were defined by Ionel-Parker \cite{IP1}, Li-Ruan \cite{LR} in symplectic geometry, and Jun Li \cite{Li1} in algebraic geometry. We will give an overview of relative Gromov-Witten invariants and fix notations throughout the paper. Our presentation is based on \cite{GrVa}\cite{Ka}\cite{MP}.

\subsubsection{Definition}
Let $W$ be a smooth connected divisor on $V$. If $E$ is a vector bundle on $V$, we use $\mathbb{P}(E)$ to denote the projectization of $E$.

Let $N_{W/V}$ denote the normal bundle of $W$ in $V$. We set $Y=\mathbb{P}(N_{W/V}\oplus \mathcal{O}_W)$. $D_0$ and $D_{\infty}$ are two divisors in $Y$ corresponding to $\mathbb{P}(N_{W/V})$ and $\mathbb{P}(\mathcal{O}_W)$ respectively.

\begin{definition}
Let $Y_s$ be the union of $s$ copies of $Y$, so that the $i$th copy of $D_{\infty}$ is glued to $(i+1)$th copy of $D_0$.
\end{definition}

We define $D^{i}_0$ (resp. $D^{i}_{\infty}$) to be the $i$th copy of $D_0$ (resp. $D_{\infty}$). The singular locus of $Y_s$ is $Sing (Y_s)=\bigcup_{i=1}^{s-1}D^{i}_{\infty}$. For simplicity, we also use $D_0$ (resp. $D_{\infty}$) to denote the first (resp. last) copy of $D_0$ (resp. $D_{\infty}$).

The projection map from $Y$ to $W$ can be naturally extended to $Y_s$. We denote the extended map as $p_s$.

Let $Z_s=D_0\bigcup Sing(Y_s)\bigcup D_{\infty}$. We define $Aut_{Z_s}(Y_s)$ to be the group of isomorphisms of $Y_s$ fixing $Z_s$. Obviously, $Aut_{Z_s}(Y_s)\backsimeq (\mathbb{C}^*)^s$.

\begin{definition}
Let $\widetilde{V}_s$ be the union of $V$ and $Y_s$, so that the divisor $W$ of $V$ is glued to $D_0$ of $Y_s$.
\end{definition}

The singular locus of $\widetilde{V}_s$ is
$Sing(\widetilde{V}_s)=W\bigcup Sing~Y_s$. The contraction map $c_s:\widetilde{V}_s\rightarrow V$ is defined by $c_s|_V=Id$ and $c_s|_{Y_s}=p_s$.

We set $T_s=V\bigcup Z_s$. Let $Aut_{T_s}(\widetilde{V}_s)$ denote the group of isomorphisms of $\widetilde{V}_s$ fixing $T_s$. Obviously, $Aut_{T_s}(\widetilde{V}_s)=Aut_{Z_s}(Y_s)$.

Let $\Gamma$ be a tuple $(g,\beta,\{\nu_i\}_{i=1}^n,m)$, where $g,m$ are nonnegative integers, $\beta$ is a curve class, and $\{\nu_i\}_{i=1}^n$ is a set of positive integers such that $\sum_i \nu_i=W\cdot \beta$.

\begin{definition}\label{relmap}
A stable relative map of the pair $(V, W)$ with data $\Gamma$ is a tuple $(\Sigma,x_1,...,x_m,y_1,...,y_n,f,\widetilde{V}_s)$, where
\begin{itemize}
\item[(a1)] $\Sigma$ is a connected nodal curve with arithmetic genus $g$. $x_1,\ldots,y_n$ are distinct smooth marked points on $\Sigma$. We set $x=\{x_1,\ldots,x_m\}$, $y=\{y_1,\ldots,y_n\}$ and call $x$ the absolute marked points and $y$ the boundary (relative) marked points.
\item[(a2)]  $f$ is a morphism from $\Sigma$ to $\widetilde{V}_s$, such that $(c_s\circ f)_*([\Sigma])=\beta$ and $f^{-1}(D_{\infty})=\Sigma\nu_j y_j$.
\item[(a3)] Predeformability condition: the preimage of $Sing(\widetilde{V}_s)$ contains only the nodes of $\Sigma$; for each node $q$ mapped to $Sing(\widetilde{V}_s)$, the two branches of $\Sigma$ at $q$ are mapped to two different irreducible components of $\widetilde{V}_s$, and the orders of contact with $Sing(\widetilde{V}_s)$ on both sides are equal.
\item[(a4)] $|Aut(f)|<\infty$, where $Aut(f):=\{(\varphi,\psi)\in Aut(\Sigma,x,y)\times Aut_{T_s}(\widetilde{V}_s)|f\circ \varphi=\psi\circ f\}$.
\end{itemize}
\end{definition}
Two stable relative maps $(\Sigma,x,y,f,\widetilde{V}_s)$ and $(\Sigma',x',y',f',\widetilde{V}_{s'})$ are said to be isomorphic if there are isomorphisms $\varphi: (\Sigma,x,y)\rightarrow (\Sigma',x',y')$ and $\psi:\widetilde{V}_s\rightarrow \widetilde{V}_{s'}$, such that $\psi\circ f=f'\circ \varphi$ and $c_{s'}\circ \psi=c_s$.

We use $\overline{\mathcal{M}}_{\Gamma}(V,W)$ to denote the moduli space of stable relative maps of the pair $(V,W)$ with data $\Gamma$.

Let $L_i$ denote the cotangent line bundle associated to the absolute marked point $x_i$ and set $\psi_i=c_1(L_i)$. The evaluation map associated to $x_i$ is defined by
\begin{eqnarray*}
ev_i:  & \overline{\mathcal{M}}_{\Gamma}(V,W) &\longrightarrow  V\\
       & [(\Sigma,x,y,f,\widetilde{V}_s)]& \longmapsto (c_s\circ f)(x_i)
\end{eqnarray*}
where $[(\Sigma,x,y,f,\widetilde{V}_s)]$ is an isomorphic class. The evaluation map associated to the boundary marked point $y_j$ is defined by:
\begin{eqnarray*}
\widetilde{ev}_j:  & \overline{\mathcal{M}}_{\Gamma}(V,W) &\longrightarrow  W\\
       & [(\Sigma,x,y,f,\widetilde{V}_s)]& \longmapsto (c_s\circ f)(y_j).
\end{eqnarray*}
We note that the boundary marked point $y_j$ is mapped to $W$.

Let $\delta_1,\delta_2,...\delta_{n_W}$ be a basis of $H^*(W,\mathbb{Q})$. A cohomology weighted partition $\nu$ is an unordered set of weighted pairs
$$\big\{(\nu_1,\delta_{s_1}),(\nu_2,\delta_{s_2})~...~(\nu_n,\delta_{s_n})\big\}.$$
The group of permutation symmetries of $\nu$ is denoted as $Aut(\nu)$. We may use $l(\nu)$ to denote the number of weighted pairs in $\nu$. Here $l(\nu)=n$.

The standard order of weighted pairs is given by
\[(\nu_i,\delta_i)>(\nu'_{i'},\delta_{i'}),~\text{if}~\nu_i>\nu'_{i'}~\text{or}~\nu_i=\nu'_{i'},~ i>i'.\]
Without further explanation, we always assume that $\nu$ is written in decreasing order by standard order.

The relative Gromov-Witten invariant of the pair $(V,W)$ is defined by
\begin{equation}\label{relGW}
\begin{aligned}
\Big\langle\prod_{i=1}^m&\tau_{k_i}(\theta_{l_i})\Big | \nu\Big\rangle_{g,\beta}^{V,W}\\
&:=\frac{1}{|Aut(\nu)|}\int_{[\overline{\mathcal{M}}_{\Gamma}(V,W)]^{vir}}\prod_{i=1}^m\psi_i^{k_i}ev_i^*(\theta_{l_i})\prod_{j=1}^{n} \widetilde{ev}_j^*(\delta_{s_j}),
\end{aligned}
\end{equation}
where $\Gamma=(g,\beta,\{\nu_i\}_{i=1}^n,m)$ and $[\overline{\mathcal{M}}_{\Gamma}(V,W)]^{vir}$ is the virtual fundamental class of $\overline{\mathcal{M}}_{\Gamma}(V,W)$ with virtual dimension
\begin{equation}\label{relvirdim}
c_1(T_V)\cdot\beta+(dim V-3)(1-g)+m+n-W\cdot\beta.
\end{equation}

We also need the moduli space of relative stable maps with possibly disconnected domains. Firstly, we need to introduce a relative graph $G^r$ to keep track of the data of different components.

 The relative graph $G^r$ consists of
\begin{itemize}
\item[(i)] a set $S$ of vertices,
\item[(ii)] an assignment of genera $g:S\rightarrow \mathbb{Z}_{\geq 0}$, and an assignment of curve classes $\beta:S\rightarrow H_2(V,\mathbb{Z})$,
\item[(iii)] an assignment of absolute marked points $a:\{1,2,\ldots,m\}\rightarrow S$,
\item[(iv)] an assignment of boundary marked points $b:\{1,\ldots,n\}\rightarrow S$,
\item[(v)]an assignment of contact orders $\tilde{\nu}:\{1,\ldots,n\}\rightarrow \mathbb{Z}_{>0}$, such that for each vertex $s_i\in S$, we have $\beta(s_i)\cdot W=\sum_{j\in b^{-1}(s_i)}\tilde{\nu}(j)$.
\end{itemize}

We set $g(G^r)=\sum_{s_i\in S} g(s_i)-|S|+1$ to be the genus of $G^r$. We set $\beta(G^r)=\sum_{s_i\in S}\beta(s_i)$ to be the curve class of $G^r$. We set
\begin{equation}\label{relgrapy-inf}
\Gamma(G^r)=\big(g(G^r),\beta(G^r),m,n,\tilde{\nu}\big)
\end{equation}
to be the total data of relative graph $G^r$.

Two relative graphs $G^r_1$ and $G^r_2$ are said to be isomorphic if they have the same numbers of absolute and relative marked points, the same assignment of contact orders and there exists a bijection of vertices $\phi:S_1\rightarrow S_2$ which commutes with the assignments (ii)-(iv).

\begin{definition}
A stable relative map of the pair $(V,W)$ with relative graph $G^r$ is a tuple $(\Sigma,x_1,...,x_m,y_1,...,y_n,f,\widetilde{V}_s)$ such that
\begin{itemize}
\item[(a1')] $\Sigma$ is a disjoint union of connected nodal curves $\bigsqcup_{s_i\in S}\Sigma_{s_i}$, such that the genus of
$\Sigma_{s_i}$ is given by $g(s_i)$, the absolute marked point $x_i$ lies on $\Sigma_{a(i)}$ and the boundary marked point $y_j$ lies on $\Sigma_{b(j)}$.
\item[(a2')] $f$ is a morphism from $\Sigma$ to $\widetilde{V}_s$. If we denote $f_i$ to be the restriction of $f$ to $\Sigma_{s_i}$, then $f_i$ is required to satisfy $(c_s\circ f_i)_*([\Sigma_{s_i}])=\beta(s_i)$, $f_i^{-1}(D_{\infty})=\sum_{\{j\in b^{-1}(s_i)\}}\tilde{\nu}(j) y_j$.
\item[(a3')] $f$ is also required to satisfy conditions (a3)-(a4) in the Definition \ref{relmap}.
\end{itemize}
\end{definition}

We use $\overline{\mathcal{M}}_{G^r}^{\bullet}(V,W)$ to denote the corresponding moduli space associated to the relative graph $G^r$. The possibly disconnected relative invariant can be defined as (\ref{relGW}), which factors as a product of the individual connected invariants. It is denoted by $\langle\cdots\rangle^{\bullet, V,W}_{G^r}$.

\subsubsection{Type I or II invariants}
Let $[D_0],[D_{\infty}]\in H^*(Y,\mathbb{Q})$ be cohomology classes
associated to divisors $D_0,D_{\infty}$. We may view $\delta_1,\delta_2,...\delta_{n_W}$ as elements of $H^*(Y,\mathbb{Q})$ via pullback by the projection map $p:Y\rightarrow W$. Define classes in $H^*(Y,\mathbb{Q})$ by
\begin{align*}
\gamma_i&=\delta_i,\\
\gamma_{n_W+i}&=\delta_i \cdot [D_0],\\
\gamma_{2{n_W}+i}&=\delta_i \cdot [D_{\infty}],
\end{align*}
where $i=1,2,...n_W$.

We denote the relative invariants of the pair $(Y,D_{\infty})$ by ignoring the superscripts:
\begin{equation*}
\Big\langle\prod_{i=1}^m\tau_{k_i}(\gamma_{l_i})\Big |\nu\Big \rangle_{g,\beta}=\Big\langle\prod_{i=1}^m\tau_{k_i}(\gamma_{l_i})\Big | \nu\Big\rangle_{g,\beta}^{Y,D_{\infty}}.
\end{equation*}

For the pair $(Y,D_0)$, we write the relative condition $\mu=\{(\mu_j,\delta_{t_j})\}_{j=1}^{l(\mu)}$ on the left side and define
\begin{equation}\label{Def-TypeI}
\begin{aligned}
&\Big\langle\mu\Big |\prod_{i=1}^m\tau_{k_i}(\gamma_{l_i}) \Big\rangle_{g,\beta}\\
&~:=\frac{1}{|Aut(\mu)|}\int_{[\overline{\mathcal{M}}_{\Gamma}(Y,D_0)]^{vir}}\prod_{j=1}^{l(\mu)}\widetilde{ev}_{l(\mu)+1-j}^*(\delta_{t_{l(\mu)+1-j}})\prod_{i=1}^m\psi_i^{k_i}ev_i^*(\gamma_{l_i}).
\end{aligned}
\end{equation}
Here, we write the product $\prod_{j=1}^{l(\mu)}\widetilde{ev}_{l(\mu)+1-j}^*(\delta_{t_{l(\mu)+1-j}})$ in reverse order so as to give the right sign in the degeneration formula.

Relative invariants of the pairs $(Y,D_0)$ and $(Y,D_{\infty})$ are termed $type\ I$.

The definition of stable relative maps of the pair $(Y,D_0\cup D_{\infty})$ is slightly different from that of \ref{relmap}. We will describe it in the below.

Let $Y_{s,l}=Y_s\cup Y\cup Y_l$, where $D_{\infty}$ of $Y_s$ is glued to $D_0$ of $Y$ and $D_{0}$ of $Y_l$ is glued to $D_{\infty}$ of $Y$. The divisors $D_0$ of $Y_s$ and $D_{\infty}$ of $Y_l$ can naturally be seen as divisors of $Y_{s,l}$. With some abuse of notation, we also denote them as $D_0$ and $D_{\infty}$ in $Y_{s,l}$. The natural contraction from $Y_{s,l}$ to the central $Y$ is denoted as $c_{s,l}$.

We use $Sing(Y_{s,l})$ to denote the singular locus of $Y_{s,l}$. Let $Z_{s,l}=D_0\cup Sing(Y_{s,l})\cup D_{\infty}$ and $T_{s,l}$ be the union of $Z_{s,l}$ and the central $Y$ in $Y_{s,l}$.

Let $Aut_{T_{s,l}}(Y_{s,l})$ denote the group of isomorphisms of $Y_{s,l}$ fixing $T_{s,l}$. Obviously, $Aut_{T_{s,l}}(Y_{s,l})\simeq(\mathbb{C}^*)^{s+l}$.

Let $\Gamma=\{g,\beta,\{\mu_j\}_{j=1}^n,\{\nu_k\}_{k=1}^{n'},m\}$, where $g,m$ are nonnegative integers, $\beta$ is a curve class, $\{\mu_j\}_{j=1}^n$ is a set of positive integers such that $\sum_j \mu_j=D_0\cdot \beta$ and $\{\nu_k\}_{k=1}^{n'}$ is a set of positive integers
satisfies $\sum_k \nu_k=D_{\infty}\cdot \beta$.
\begin{definition}\label{type II}
A stable relative map of the pair $(Y,D_0\cup D_{\infty})$ with data $\Gamma$ is a tuple $(\Sigma,x_1,\ldots,x_m,y_1,\ldots,y_n,z_1,\ldots,z_{n'},f,Y_{s,l})$  such that
\begin{itemize}
\item[(b1)]$\Sigma$ is a genus $g$ connected nodal curve. $x_1,\ldots,z_{n'}$ are distinct smooth marked points on $\Sigma$. We set $x=\{x_1,\ldots,x_m\}$, $y=\{y_1,\ldots,y_n\}$ and $z=\{z_1,\ldots,z_{n'}\}$.
\item[(b2)] $f$ is a morphism from $\Sigma$ to $Y_{s,l}$, such that $(c_{s,l}\circ f)_*([\Sigma])=\beta$, $f^{-1}(D_0)\\=\sum_j \mu_jy_j$ and  $f^{-1}(D_{\infty})=\sum_k \nu_k z_k$.
\item[(b3)] Predeformability condition as in Definition \ref{relmap}.
\item[(b4)] $|Aut(f)|<\infty$.

\end{itemize}
Here, $Aut(f):=\{(\varphi,\psi)\in Aut(\Sigma,x,y,z)\times Aut_{T_{s,l}}(Y_{s,l})|f \circ \varphi = \psi \circ f\}$.
\end{definition}
The isomorphism between two stable relative maps can be defined similarly as before. We denote the isomorphic class as $[(\Sigma,x,y,z,f,Y_{s,l})]$.

The moduli space of stable relative maps of the pair $(Y,D_0\cup D_{\infty})$ with data $\Gamma$ is denoted as $\overline{\mathcal{M}}_{\Gamma}(Y,D_0\cup D_{\infty})$.

As before, we use $\psi_i$ to denote the first Chern class of the cotangent line bundle associated to $x_i$. The evaluation map associated to $x_i$ is defined to be
\begin{eqnarray*}
ev_i:  & \overline{\mathcal{M}}_{\Gamma}(Y,D_0\cup D_{\infty}) &\longrightarrow  Y\\
       & [(\Sigma,x,y,z,f,Y_{s,l})]& \longmapsto (c_{s,l}\circ f)(x_i).
\end{eqnarray*}
The evaluation map associated to $y_j$ is defined to be
\begin{eqnarray*}
\widetilde{ev}_j:  & \overline{\mathcal{M}}_{\Gamma}(Y,D_0\cup D_{\infty}) &\longrightarrow  W\\
       & [(\Sigma,x,y,z,f,Y_{s,l})]& \longmapsto (p\circ c_{s,l}\circ f)(y_j).
\end{eqnarray*}
The evaluation map $\widehat{ev}_k$ associated to $z_k$ can be defined as $\widetilde{ev}_j$.

The relative invariants of the pair $(Y,D_0\bigcup D_{\infty})$ can be defined by
\begin{equation*}
\begin{aligned}
&\Big\langle\mu\Big |\prod_{i=1}^m\tau_{k_i}(\gamma_{l_i})\Big | \nu\Big \rangle_{g,\beta}\\
&~~:=A_{\mu,\nu}\int_{[\overline{\mathcal{M}}_{\Gamma}(Y,D_0\cup D_{\infty})]^{vir}}\prod_{j=1}^{n} \widetilde{ev}_{n+1-j}^*(\delta_{t_{n+1-j}})\prod_{i=1}^m\psi_i^{k_i}ev_i^*(\gamma_{l_i})\prod_{k=1}^{n'} \widehat{ev}_k^*(\delta_{s_k}),
\end{aligned}
\end{equation*}
where $A_{\mu,\nu}=\frac{1}{|Aut(\mu)||Aut(\nu)|}$. The relative invariants of the pair $(Y,D_0\cup D_{\infty})$ are termed $type\ II$.

In order to define disconnected type II invariants, we need to introduce the relative graph for the pair $(Y,D_0\cup D_{\infty})$. It consists of
\begin{itemize}
\item[(i)] a set $S$ of vertices,
\item[(ii)] an assignment of genera $g:S\rightarrow \mathbb{Z}_{\geq 0}$, and an assignment of curve classes $\beta:S\rightarrow H_2(Y,\mathbb{Z})$,
\item[(iii)] an assignment of absolute marked points $a:\{1,2,\ldots,m\}\rightarrow S$,
\item[(iv)] assignments of boundary marked points
         $$b_0:\{1,\ldots,n\}\rightarrow S,~~~~~b_{\infty}:\{1,\ldots,n'\}\rightarrow S,$$
\item[(v)] assignments of contact orders
$$\tilde{\mu}:\{1,\ldots,n\}\rightarrow \mathbb{Z}_{>0},~~~~~\tilde{\nu}:\{1,\ldots,n'\}\rightarrow \mathbb{Z}_{>0},$$
such that for a given vertex $s$, we have
$$\beta(s)\cdot [D_0]=\sum_{i\in b_0^{-1}(s)}\tilde{\mu}(i),~~~~~\beta(s)\cdot [D_{\infty}]=\sum_{j\in b_{\infty}^{-1}(s)}\tilde{\nu}(j).$$
\end{itemize}
For a given relative graph $G^r$ of the pair $(Y,D_0\cup D_{\infty})$, it is very natural to define the possibly disconnected moduli space $\overline{\mathcal{M}}_{G^r}^{\bullet}(Y,D_0\cup D_{\infty})$. The corresponding possibly disconnected invariants can be computed by the product of the individual connected invariants as before.

The possibly disconnected type I and type II invariants are indicated by adding a superscript $\bullet$ to the bracket.

\subsubsection{Rubber invariants}
Rubber invariants will naturally appear in the relative virtual localization formula.

Firstly, I will describe the moduli space of stable relative maps to a non-rigid target.

 A stable relative map to a non-rigid target is a tuple $(\Sigma,x,y,z,f,Y_{s,l})$ which satisfies (b1)-(b4) of Definition \ref{type II} except that the automorphism group $Aut(f)$ is changed to be
\[Aut(f)=\{(\varphi,\psi)\in Aut(\Sigma,x,y,z)\times Aut_{Z_{s,l}}(Y_{s,l})|f \circ \varphi = \psi \circ f\}.\]
Here, the automorphism group $Aut_{Z_{s,l}}(Y_{s,l})$ does not fix the central $Y$ in $Y_{s,l}$. Since the central $Y$ is not fixed, we could replace $Y_{s,l}$ by $Y_{s+l+1}$ ignoring the choices of different $s,l$ to a fixed sum $s+l$.

The moduli space of stable relative maps to a non-rigid target is denoted as $\overline{\mathcal{M}}_{\Gamma}(Y,D_0\cup D_{\infty})^{\sim}$. We also call it a rubber space.

The evaluation maps for $x_i$, $y_j$ and $z_k$ can be defined by evaluating the corresponding marked points under the map $p_{s+l+1}\circ f$. Here, all the evaluation maps are mapped into $W$.

Next, we introduce cotangent line bundles on $\overline{\mathcal{M}}_{\Gamma}(Y,D_0\cup D_{\infty})^{\sim}$ coming from the targets. Our main reference is \cite{GrVa} Section 2.5.

Let $\mathfrak{M}_{0,2}$ denote the Artin stack of genus 0, 2-pointed pre-stable curves. $\mathcal{U}$ is an open substack of $\mathfrak{M}_{0,2}$ which parametrizing curves such that the two marked points are separated by every node.

Fixing a point $w\in W$.  There is a well-defined map
\begin{eqnarray*}
T:  & \overline{\mathcal{M}}_{\Gamma}(Y,D_0\cup D_{\infty})^{\sim} &\longrightarrow  \mathcal{U}\\
       & [(\Sigma,x,y,z,f,Y_{s})]& \longmapsto \bigr[\big(R_{s},D_0\cap R_{s}, D_{\infty}\cap R_{s}\big)\bigr],
\end{eqnarray*}
where $R_{s}=(p_s)^{-1}(w)$ is a genus 0 pre-stable curve with two marked points $D_0\cap R_{s}$ and $D_{\infty}\cap R_{s}$.

The first Chern class of the cotangent line bundle associated to the marked point $D_0\cap R_{s}$ is denoted as $\psi_{0}$ and that of $D_{\infty}\cap R_{s}$ is denoted as $\psi_{\infty}$. The pullback of $\psi_0$ (resp. $\psi_{\infty}$) to $\overline{\mathcal{M}}_{\Gamma}(Y,D_0\cup D_{\infty})^{\sim}$ by $T$ will still be denoted as $\psi_0$ (resp. $\psi_{\infty}$). We note that $\psi_0$ and $\psi_{\infty}$ do not depend on the choice of $w$.

In the following, we only consider rubber invariants of the form
\begin{equation*}
\begin{aligned}
&\Big\langle\mu\Big |\prod_{i=1}^m\tau_{k_i}(\delta_{l_i})\Big |k_{\infty}, \nu\Big \rangle_{g,\beta}^{\sim}=A_{\mu,\nu}\times\\
&\int_{[\overline{\mathcal{M}}_{\Gamma}(Y,D_0\cup D_{\infty})^{\sim}]^{vir}}\prod_{j=1}^{n} \widetilde{ev}_{n+1-j}^*(\delta_{t_{n+1-j}})\prod_{i=1}^m\psi_i^{k_i}ev_i^*(\delta_{l_i})\prod_{k=1}^{n'} \widehat{ev}_k^*(\delta_{s_k})\psi_{\infty}^{k_{\infty}}.
\end{aligned}
\end{equation*}

The graph for rubber space can be defined in the same way as for the stable relative maps of the pair $(Y,D_0\cup D_{\infty})$. We may similarly define the possibly disconnected rubber space and the corresponding possibly disconnected invariant.
As before, we add a superscript $\bullet$ to indicate it.

\begin{remark}
Rubber invariants can be determined from type II invariants by rubber calculus in \cite{MP}.
\end{remark}

\subsection{Degeneration formula}\label{ep-defm}
We will introduce degeneration formula \cite{EGH}\cite{IP2}\cite{LR}\cite{Li2} in the case of deformation to the normal cone. For simplicity, we may suppose that the cohomology classes of $V$ only contain even classes. The presentation here follows from \cite{HHKQ}\cite{Li2}.

Let $\mathcal{B}$ be the blowing up of $V\times \mathbb{C}$ along $W\times 0$. We set $\pi:\mathcal{B}\rightarrow \mathbb{C}$ to be the projection to the second factor. For $t\neq 0$, the fiber $\pi^{-1}(t)$ is isomorphic to $V$. For $t=0$, the fiber is $V\cup_{W}Y$.

We have natural inclusion map
\[i_t:\pi^{-1}(t)\rightarrow \mathcal{B}\]
and one gluing map
\[j=(j_1,j_2):V\bigsqcup Y\rightarrow \pi^{-1}(0).\]

We say cohomology class $\tilde{\theta}\in H^*(\mathcal{B},\mathbb{Q})$ is a lifting of $\theta\in H^*(V,\mathbb{Q})$, if $i_t^*(\tilde{\theta})=\theta$, for all $t\neq 0$.

Let $b:\mathcal{B}\rightarrow V\times \mathbb{C}$ be the blowing down map. All the cohomology classes of $V$ can be lifted to $\mathcal{B}$ by the pullback of $b$. The lifting of a cohomology class in $V$ may not be unique. We suppose that $\{\tilde{\theta}_i\}$ is a lifting of the basis $\{\theta_i\}$.

Let $\{\delta^i\}$ be the dual basis of $\{\delta_i\}$. For a cohomology weighted partition $\eta=\{(\eta_i,\delta_{r_i})\}$, we use $\check{\eta}$ to denote the dual partition $\{(\eta_i,\delta^{r_i})\}$.

The degeneration formula expresses absolute invariants of $V$ in terms of relative invariants of the pairs $(V,W)$ and $(Y, D_0)$,
\begin{equation}\label{degfm}
\begin{aligned}
&\Big\langle\prod_{i=1}^m\tau_{k_i}(\theta_{l_i})\Big\rangle^{V}_{g,\beta}
=\sum_{\eta}C_{\eta}\sum_{[(G^r_1,G^r_2,I)]}\Big\langle\prod_{i_1\in I_1}\tau_{k_{i_1}}\big((i_0\circ j_1)^*\tilde{\theta}_{l_{i_1}}\big)\Big | \eta\Big\rangle_{G^r_1}^{\bullet,V,W}\\
&~~~~~~~~~~~~~~~~~~~~~~~~~~~~~~~~~~~~~~~~~~\times\Big\langle\check{\eta}\Big |\prod_{i_2\in I_2}\tau_{k_{i_2}}\big((i_0\circ j_2)^*\tilde{\theta}_{l_{i_2}}\big)\Big \rangle_{G^r_2}^{\bullet}.
\end{aligned}
\end{equation}
Here, $C_{\eta}=\prod_{j=1}^{l(\eta)}\eta_j\cdot |Aut(\eta)|$, $I=(I_1,I_2)$ with $I_1\bigsqcup I_2=\{1,\ldots,m\}$, $G^r_1$ is a relative graph of the pair $(V,W)$ and $G^r_2$ is that of the pair $(Y,D_0)$. $G^r_1, G^r_2$ both have $l(\eta)$ boundary marked points which are indexed by $\{1,2,\ldots,l(\eta)\}$ and the same assignment of contact orders given by $\{\eta_i\}_{i=1}^{l(\eta)}$. The triple $(G^r_1,G^r_2,I)$ satisfies
\begin{itemize}
 \item[(i)] For any two source domains $\Sigma_1$, $\Sigma_2$ of the relative maps with graphs $G^r_1$, $G^r_2$, the gluing domain, which is given by gluing $\Sigma_1$, $\Sigma_2$ along the relative marked points with the same markings, is connected.
 \item[(ii)] $g=g(G^r_1)+g(G^r_2)+l(\eta)-1$.
 \item[(iii)] $(i_t)_*(\beta)=(i_0)_*\big((j_1)_*(\beta(G^r_1))+(j_2)_*(\beta(G^r_2))\big)$.
 \item[(vi)] The absolute marked points of $G^r_i$ are indexed by the set $I_i$.
 \end{itemize}
Two triples $(G^r_1,G^r_2,I)$ and $(G^r_{1'},G^r_{2'},I')$ with the same relative data $\{\eta_i\}_{i=1}^{l(\eta)}$ are said to be isomorphic, if $I=I'$ and $G^r_i$ is isomorphic to $G^r_{i'}$ for $i=1,2$. We use $[(G^r_1,G^r_2,I)]$ to denote the equivalent class of the triples.

Recall that $p:Y\rightarrow W$ is the projection map, $\iota:W\hookrightarrow V$ is the natural inclusion. We may deduce from condition (iii) that
\begin{equation}\label{dgfm-degcons}
\beta=\beta(G^r_1)+(\iota\circ p)_*(\beta(G^r_2)).
\end{equation}

\subsection{Maulik-Pandharipande's algorithm}\label{algm}
In \cite{MP}, Maulik and Pandharipande related relative Gromov-Witten invariants to absolute Gromov-Witten invariants. We will briefly review their method under the assumption $(V,W)=(\mathbb{P}^4,Q)$. In this case $Y=\mathbb{P}(N_{Q/\mathbb{P}^4}\oplus \mathcal{O}_Q)$.

Firstly, we briefly review Maulik and Pandharipande's algorithm (see \cite{MP}, Theorem 1) which can be used to determine type I and type II invariants from the absolute Gromov-Witten invariants of $Q$.

Let $[F]\in H_2(Y)$ be the curve class of the fiber of $Y$. Type I or Type II invariants with curve class $\beta=d[F]$ will be called fiber invariants. The fiber invariants can be completely solved by the equivariant relative theory of $\mathbb{P}^1$ (see \cite{MP}, Section 1.2).

The distinguished type II invariants are defined to be type II invariants with a distinguished insertion of the form $\tau_0([D_0]\cdot \delta)$, i.e.
\begin{equation*}
\Big\langle\mu\Big |\tau_0([D_0]\cdot \delta)\prod_{i=1}^m\tau_{k_i}(\gamma_{l_i})\Big | \nu\Big \rangle_{g,\beta}
\end{equation*}
where $\delta$ is a cohomology class of $H^*(Y,\mathbb{Q})$ pulling back from $Q$ with $deg(\delta)>0$.

They gave a partial ordering $\stackrel{\circ}{<}$ (see \cite{MP}, Section 1.3) for the distinguished type II invariants which are important to the inductive algorithm.

Now the type I and type II invariants can be determined from the absolute Gromov-Witten invariants of $Q$ by the following several steps.

Step 1. Each type I or type II invariant can be expressed as twisted Gromov-Witten invariants of $Q$ (see (\ref{defTGW})) and rubber invariants via relative virtual localization formula.

Step 2. Twisted Gromov-Witten invariants of $Q$ can be converted into absolute Gromov-Witten invariants of $Q$ by quantum Riemann-Roch theorem \cite{CG}. Rubber invariants can be computed in terms of distinguished type II invariants via rubber calculus (see \cite{MP}, Section 1.5).

Step 3. Via degeneration formula, distinguished type II invariants can be computed in terms of strictly lower distinguished type II invariants with respect to the partial ordering $\stackrel{\circ}{<}$ and some type I invariants. Those type I invariants can be expressed again in terms of Gromov-Witten invariants of $Q$ and distinguished type II invariants which are lower than the original one by Step $1$ and $2$.

Step 4. After finite steps, each type I or type II invariant can be computed in terms of Gromov-Witten invariants of $Q$ and fiber invariants.

The fiber invariants are completely determined. So each type I or type II invariant can be determined from Gromov-Witten invariants of $Q$.

Combing the algorithm given above with degeneration formula, Maulik and Pandharipande came up with a way to determine relative invariants of the pair $(\mathbb{P}^4,Q)$ from the absolute invariants of $\mathbb{P}^4$ and $Q$ (see \cite{MP}, Theorem 2). We give a brief account of it.

The degeneration formula (\ref{degfm})
(set $(V,W)=(\mathbb{P}^4,Q)$) can be seen as equations relating relative invariants of the pair $(\mathbb{P}^4,Q)$ with absolute invariants of $\mathbb{P}^4$.

If we treat relative invariants of the pair $(\mathbb{P}^4,Q)$ as unknowns. Those equations form a lower triangle system with respect to the partial ordering $\stackrel{\circ}{<}$ (see \cite{MP}, Section 2.3) on the relative invariants of the pair $(\mathbb{P}^4,Q)$.

Then we can solve the equations to recover relative invariants of the pair $(\mathbb{P}^4,Q)$ from the absolute invariants of $\mathbb{P}^4$ and relative invariants of the pair $(Y,D_0)$. The latter can be determined from absolute invariants of $Q$ using the algorithm given above.

\section{Proof of main theorem}\label{section-3}
We will prove Theorem \ref{thm-2} in this section. It relies on Theorems \ref{keythm} and \ref{keythm2}. Theorem \ref{keythm} will be proven in Section \ref{partI}. Section \ref{partII} and Appendix A give a complete proof of Theorem \ref{keythm2}. By using Theorems \ref{keythm} and \ref{keythm2}, we give a short proof of Theorem \ref{thm-2} at the end of Section \ref{partI}. In this section, we always assume that the relative pair $(V,W)=(\mathbb{P}^4,Q)$. Thus $Y=\mathbb{P}(N_{Q/\mathbb{P}^4}\oplus \mathcal{O}_Q)$.

\subsection{Part I}\label{partI}
Let $\alpha$ be the generator of curve classes of $Q$. By virtual dimension formula (\ref{abvirdim}), we have
\begin{equation}\label{virdim-Q}
\text{dim}([\overline{\mathcal{M}}_{g,m}(Q,d\alpha)]^{vir})=m.
\end{equation}
Sometimes, we will ignore $\alpha$ and denote the curve class $d\alpha$ by $d$.

Now the following definition makes sense
\begin{equation}\label{def-Q}
N_{g,d}:=\langle\rangle_{g,d}^Q,
\end{equation}
where $g\geq 2$ or $d\neq 0$. Since $N_{g,0}$ can be computed by (\ref{inv-d0}), we only need to compute those $N_{g,d}$ with $d>0$. So we always assume that $d>0$ in the following.

For any absolute Gromov-Witten invariant of $Q$
\[\Big\langle\prod_{i=1}^m\tau_{k_i}(\delta_{l_i})\Big\rangle^{Q}_{g,d},\]
the dimension constraint (\ref{virdim-Q}) implies that there must be one insertion in $\prod_{i=1}^m\tau_{k_i}(\delta_{l_i})$ with the following form: \[\tau_0(1),~~ \tau_0(\delta)~ (deg(\delta)=2)~~ \text{or}~~ \tau_1(1).\]
The insertion can be eliminated by string equation, divisor equation or dilaton equation. After finite steps all the insertions can be eliminated. So $N_{g,d}$ actually include all the information of the Gromov-Witten invariants of $Q$.

We recall that $p:Y\rightarrow Q$ is the projection map. For a fixed type I or type II invariant with genus $g$ and curve class $\beta$, Maulik-Pandharipande's algorithm discussed in Section \ref{algm} actually gives a recursive way to determine it from absolute invariants of $Q$ with genus $g'\leq g$ and degree $d'\leq d$, where $p_*(\beta)=d\alpha$. The absolute invariants can further be determined by $N_{g',d'}$.

We may also deduce from the lower triangle system discussed in Section \ref{algm} (see \cite{MP}, Theorem 2 for more details) that relative invariants of the pair $(\mathbb{P}^4,Q)$ with fixed genus $g$ and degree $d$ can be determined from absolute invariants of $\mathbb{P}^4$ and $Q$ with genus $g'\leq g$ and degree $d'\leq d$. The invariants of $\mathbb{P}^4$ are known by the method of \cite{Ga2}\cite{Gi}\cite{GP}
and that of $Q$ can be determined by $N_{g',d'}$. We may conclude that
\begin{lemma}\label{rl-ab}
Type I and type II invariants with fixed genus $g$ and curve class $\beta$ $\big(p_*(\beta)=d\alpha\big)$ can be determined by $N_{g'\leq g,d'\leq d}$. If we treat absolute invariants of $\mathbb{P}^4$ as known constants, relative invariants of the pair $(\mathbb{P}^4,Q)$ with fixed genus $g$ and degree $d$ can be determined by $N_{g'\leq g,d'\leq d}$.
\end{lemma}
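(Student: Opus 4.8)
Lemma \ref{rl-ab} asserts two things. First, any type I or type II invariant with fixed genus $g$ and curve class $\beta$ (with $p_*(\beta)=d\alpha$) is determined by the collection $\{N_{g',d'}\}_{g'\leq g,\,d'\leq d}$. Second, granting that absolute invariants of $\mathbb{P}^4$ are known constants, any relative invariant of the pair $(\mathbb{P}^4,Q)$ with fixed genus $g$ and degree $d$ is likewise determined by $\{N_{g',d'}\}_{g'\leq g,\,d'\leq d}$. The statement is essentially a bookkeeping consequence of the Maulik--Pandharipande algorithm of Section \ref{algm}, so my plan is to trace the four-step algorithm and track carefully which genera and degrees enter at each stage.

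**First assertion.** The plan is to run the algorithm of Section \ref{algm} and verify that the recursion never raises $g$ or $d$. I would set up the induction on the pair $(g,d)$ with respect to the lexicographic-type order implicit in the algorithm, taking as the inductive hypothesis that all fiber invariants and all strictly lower distinguished type II invariants are already expressed through $N_{g'\leq g,\,d'\leq d}$. The key bookkeeping points are: in Step 1, relative virtual localization rewrites a type I or II invariant in terms of twisted Gromov--Witten invariants of $Q$ and rubber invariants, and the localization fixed loci do not increase total genus or the degree over $Q$; in Step 2, the quantum Riemann--Roch conversion of \cite{CG} expresses twisted invariants of $Q$ through honest absolute invariants of $Q$ of the same genus and degree (hence through $N_{g',d'}$ after dimension reduction, exactly as explained just above the lemma using string/divisor/dilaton), and rubber calculus reduces rubber invariants to distinguished type II invariants without raising $g$ or $d$ since $p_*$ is unchanged; in Step 3, the degeneration formula expresses a distinguished type II invariant through strictly $\stackrel{\circ}{<}$-lower distinguished type II invariants and type I invariants, all again with $g'\leq g$, $d'\leq d$ by the additivity constraints (ii) and (iii)/(\ref{dgfm-degcons}). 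Step 4 guarantees termination after finitely many steps, at which point everything lands on fully-solved fiber invariants and on absolute invariants of $Q$.

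**Second assertion.** Here I would invoke the lower-triangular system described at the end of Section \ref{algm} (and \cite{MP}, Theorem 2). Treating relative invariants of $(\mathbb{P}^4,Q)$ as unknowns, the degeneration formula (\ref{degfm}) with $(V,W)=(\mathbb{P}^4,Q)$ becomes a triangular system with respect to the partial ordering $\stackrel{\circ}{<}$, whose off-diagonal inputs are absolute invariants of $\mathbb{P}^4$ together with relative invariants of the pair $(Y,D_0)$, i.e.\ type I invariants. By the additivity constraints (ii) and (\ref{dgfm-degcons}) these inputs again only involve genus $\leq g$ and degree $\leq d$. Solving the triangular system and feeding in the first assertion (to handle the type I invariants) and the hypothesis that $\mathbb{P}^4$-invariants are known constants yields the claim.

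**Main obstacle.** No single step is deep; the difficulty is entirely in the careful monotonicity check. The delicate point is Step 3 together with the rubber calculus: one must confirm that every invariant appearing on the right-hand side of the degeneration/rubber reductions has genus $g'\leq g$ \emph{and} degree $d'\leq d$, with strict decrease in at least one coordinate (or strict $\stackrel{\circ}{<}$-descent) to ensure the recursion is well-founded and terminates, rather than merely $g'\leq g$, $d'\leq d$. I expect this to be the only place where the argument is not purely formal, since rubber targets and disconnected relative graphs can redistribute genus across components, and one must use the genus additivity $g=\sum g(s_i)-|S|+1$ and the degree additivity over $Q$ (from $p_*$ and (\ref{dgfm-degcons})) to rule out any increase.
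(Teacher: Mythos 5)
Your proposal is correct and follows essentially the same route as the paper: both assertions are obtained by citing the Maulik--Pandharipande algorithm of Section \ref{algm} (resp.\ the lower-triangular system of \cite{MP}, Theorem 2) and observing that each reduction step preserves the bounds $g'\leq g$, $d'\leq d$, with absolute invariants of $Q$ then reduced to $N_{g',d'}$ via the string/divisor/dilaton argument given just before the lemma. If anything, your bookkeeping of the four steps is more explicit than the paper's, which simply asserts the monotonicity.
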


Before we state next lemma, we need some preparations.

We give a $\mathbb{C}^*$-action on $\mathbb{P}^1$ by
\begin{equation}\label{caction}
\begin{aligned}
\mathbb{C}^* \times \mathbb{P}^1 & \longrightarrow \mathbb{P}^1\\
(w,[z_1,z_2]) & \mapsto [z_1,wz_2]
\end{aligned}
\end{equation}
and put $\mathbf{0}=[1,0]$ and $\infty=[0,1]$. $[\mathbf{0}]$ and $[\infty]$ are two classes in $H^*_{\mathbb{C}^*}(\mathbb{P}^1,\mathbb{Q})$ corresponding to the fixed points $\mathbf{0}$ and $\infty$.

Let $t$ be the generator of $H^*_{\mathbb{C}^*}(pt,\mathbb{Q})$ corresponding to the dual of standard representation of $\mathbb{C}^*$. $H^*_{\mathbb{C}^*}(\mathbb{P}^1,\mathbb{Q})$ is a module over the ring $\mathbb{C}[t]$.

$[\mathbf{0}]$ and $[\infty]$ satisfy
\begin{equation}\label{equrls}
[\infty]-[\mathbf{0}]=t,~~~~[\infty]\cdot [\mathbf{0}]=0.
\end{equation}

Let $H$ be the hyperplane class in $\mathbb{P}^4$. With some abuse of notations, the restriction of $H$ to $Q$ will also be denoted as $H$. We may also see $H$ as a cohomology class of $Y$ via pull-back by projection map. In the following, the term
$$H^{n_i}\prod_{k_i=0}^{l_i}(k_i\tau+[D_{\infty}])$$
should be seen as one single insertion, which is equivalent to insert the term
$$ev_i^*\big(H^{n_i}\big)\prod_{k_i=0}^{l_i}\big(k_i\psi_i+ev_i^*([D_{\infty}])\big)$$
in the definition of relative invariants (\ref{relGW}).

Let $\alpha,\beta\in \mathbb{Z}$. We will use the following convention:
\begin{equation}\label{conven}
{\alpha\choose \beta}=
\begin{cases}
  0, & \mbox{if } \alpha<\beta, \\
  1, & \mbox{if } \alpha=\beta, \\
  0, & \mbox{if } \alpha>\beta,~\beta<0,\\
  \frac{(\alpha)!}{\beta!(\alpha-\beta)!}, & \mbox{if } \alpha>\beta,~\beta\geq 0.
\end{cases}
\end{equation}

The next lemma compute some genus $0$ fiber invariants of the pair $(Y,D_0)$, which will be used in the rest of the paper.
\begin{lemma}\label{typeI-g0}
Let $m,\mu\in\mathbb{Z}_{>0}$, $l_1, l_2,\ldots,l_m\in \mathbb{Z}_{\geq 0}$ and $h\in\{0,1,2,3\}$. If $1-\mu+\sum_{i=1}^m l_i\neq h$, then
\[\Big\langle (\mu,H^{3-h})\Big |\prod_{i=1}^m\Bigr\{\prod_{k_i=0}^{l_i}(k_i\tau+[D_{\infty}])\Bigr\}\Big\rangle_{0,\mu[F]}=0.\]
If $1-\mu+\sum_{i=1}^m l_i= h$ and
\[\sum_{i\in I}l_i\leq \mu,~~~\forall I\subset\{1,2,\ldots,m\}\text{~s.t.~}|I|\leq m-2,\]
then we have
\[\Big\langle (\mu,H^{3-h})\Big |\prod_{i=1}^m\Bigr\{\prod_{k_i=0}^{l_i}(k_i\tau+[D_{\infty}])\Bigr\}\Big\rangle_{0,\mu[F]}=5^{h+1}\mu^{m-2}{h+m-2\choose m-2},\]
where we use the convention (\ref{conven}) for the notation ${*\choose *}$.
\end{lemma}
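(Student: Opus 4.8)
The first assertion is purely dimensional. By the relative virtual dimension formula (\ref{relvirdim}), the space $\overline{\mathcal{M}}_{\Gamma}(Y,D_0)$ with $g=0$, $\beta=\mu[F]$, one relative point of contact order $\mu$, and $m$ absolute points has virtual dimension
\[c_1(T_Y)\cdot\mu[F]+(4-3)(1-0)+m+1-D_0\cdot\mu[F]=2\mu+1+m+1-\mu=\mu+m+2,\]
since $c_1(T_Y)\cdot[F]=2$ and $D_0\cdot[F]=1$. The integrand $\widetilde{ev}^*(H^{3-h})\prod_{i=1}^m\prod_{k_i=0}^{l_i}\bigl(k_i\psi_i+ev_i^*[D_\infty]\bigr)$ has complex degree $(3-h)+\sum_{i=1}^m(l_i+1)=3-h+m+\sum l_i$. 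These two numbers agree precisely when $1-\mu+\sum l_i=h$, so whenever $1-\mu+\sum l_i\neq h$ the integrand degree misses the virtual dimension and the invariant vanishes. This is the first part.

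\textbf{Part 2 (reduction to the fibre and the powers of $5$).} Assume now $1-\mu+\sum l_i=h$. Since $\beta=\mu[F]$ is a fibre class, every relative map in question has image in a single fibre of $p:Y\to Q$, so the relative evaluation $\pi:=\widetilde{ev}:\overline{\mathcal{M}}_{\Gamma}(Y,D_0)\to D_0\cong Q$ records the image point and exhibits the moduli space as a family over $Q$ whose fibre is the moduli of relative stable maps to the $\mathbb{P}^1$-fibre $(\mathbb{P}^1,\mathbf{0})$. By the projection formula the invariant equals $\int_Q H^{3-h}\cdot\pi_*\bigl(\prod_{i,k_i}(k_i\psi_i+ev_i^*[D_\infty])\cap[\overline{\mathcal{M}}]^{vir}\bigr)$; a relative-dimension count gives $(\sum l_i+m)-(\mu+m-1)=h$, so the pushforward lies in $H^{2h}(Q,\mathbb{Q})$ and is a multiple $c\,H^h$, whence the invariant is $c\int_Q H^3=5c$. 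The plan is to obtain $c$ by computing the fibre integral through the $\mathbb{C}^*$-action (\ref{caction}), i.e. via the equivariant relative theory of $\mathbb{P}^1$ as in Section \ref{algm}. The global geometry enters only through (\ref{equrls}): on $Y$ one has $[D_\infty]\cdot[D_0]=0$ and $[D_\infty]-[D_0]=p^*c_1(N_{Q/\mathbb{P}^4})=5H$ because $N_{Q/\mathbb{P}^4}=\mathcal{O}_Q(5)$, so the equivariant parameter $t$ of the fibre theory is replaced by $5H$ in the non-equivariant reduction. Hence a term $t^{h}$ of the fibre integral becomes $(5H)^{h}=5^{h}H^{h}$, which together with $5=\int_Q H^3$ accounts for the overall factor $5^{h+1}$; it remains to show the relevant fibre coefficient is $\mu^{m-2}\binom{h+m-2}{m-2}$.

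\textbf{Part 3 (the fibre integral).} Localizing the fibre integral, the fixed loci are assembled from components covering the fibre with full ramification over $\mathbf{0}$ (the cover $z\mapsto z^\mu$, contributing the automorphism factor $1/\mu$ and edge weights that are powers of $t$) and components contracted to $\mathbf{0}$ or $\infty$. The classes $ev_i^*[D_\infty]$ restrict to $t$ at $\infty$ and to $0$ at $\mathbf 0$, forcing the absolute markings onto a rational tree $C_\infty$ contracted over $\infty$ and attached to the single preimage of $\infty$; this tree, together with the node, is governed by $\overline{\mathcal{M}}_{0,m+1}$. Expanding $\prod_{k_i=0}^{l_i}(k_i\psi_i+t)$ and the node-smoothing factor and integrating over $C_\infty$ should reduce, via the classical genus-$0$ evaluations $\int_{\overline{\mathcal{M}}_{0,m+1}}\psi_1^{a_1}\cdots\psi_{m+1}^{a_{m+1}}=\tfrac{(m-2)!}{a_1!\cdots a_{m+1}!}$ (with $\sum a_i=m-2$), to a sum of multinomials weighted by the elementary-symmetric coefficients $e_{a_i}(0,1,\dots,l_i)$ and by the $\mu$-powers of the cover, which is expected to collapse to $\mu^{m-2}\binom{h+m-2}{m-2}$, i.e. $c=5^{h}\mu^{m-2}\binom{h+m-2}{m-2}$.

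\textbf{Main obstacle.} I expect the difficulty to be twofold. First, one must prove that the hypothesis $\sum_{i\in I}l_i\le\mu$ for all $I$ with $|I|\le m-2$ is exactly what makes every fixed locus other than the single-cover-plus-$C_\infty$ locus contribute zero; the dangerous loci are those in which the target expands at $D_0$ and a proper subset of the markings bubbles off, and without the hypothesis these produce extra boundary terms that spoil the clean formula. Second, one must carry out the combinatorial summation and verify the identity yielding $\binom{h+m-2}{m-2}$; the heaviest bookkeeping lies in balancing the $t$-powers from the edge weights against the symmetric functions $e_{a_i}(0,1,\dots,l_i)$ and the multinomial $\psi$-integrals so that the total equivariant weight is exactly $\mu^{m-2}t^{h}$, which under $t\mapsto 5H$ gives the asserted value.
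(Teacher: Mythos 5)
Your Part 1 and Part 2 are correct and coincide with the paper's argument: the vanishing is exactly the dimension constraint from (\ref{relvirdim}), and the reduction of the fiber-class invariant to the equivariant relative theory of $(\mathbb{P}^1,\mathbf{0})$, with the equivariant parameter $t$ replaced by $c_1(N_{Q/\mathbb{P}^4})=5H$ so that a fibre answer $Ct^h$ yields $5^{h+1}C$ after integrating against $H^{3-h}$ over $Q$, is precisely how the paper proceeds (via \cite{MP}, Formula (4) and the relation (\ref{equrls})).

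The genuine gap is Part 3, and you have in effect diagnosed it yourself under ``Main obstacle.'' The entire nontrivial content of the lemma beyond the dimension count is the evaluation of the fibre coefficient as $\mu^{m-2}\binom{h+m-2}{m-2}$ together with the role of the hypothesis $\sum_{i\in I}l_i\le\mu$ for all $|I|\le m-2$; your proposal only sketches a localization strategy for this and explicitly leaves unverified both (i) that this hypothesis kills every composite fixed locus (those where the target expands over $\mathbf{0}$ and subsets of markings bubble off into the rubber), and (ii) the combinatorial collapse of the $\psi$-integrals on $\overline{\mathcal{M}}_{0,m+1}$ weighted by $e_{a_i}(0,1,\dots,l_i)$ to $\mu^{m-2}\binom{h+m-2}{m-2}$. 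Neither point is routine: the hypothesis is a ``chamber'' condition, and the answer genuinely changes when it fails. The paper does not carry out this computation either; it closes the step by citing Theorem 1.6 of \cite{WL}, which is exactly the statement that in this chamber the equivariant integral equals $\mu^{m-2}\binom{h+m-2}{m-2}\,t^{h}$. So your outline is pointed in a plausible direction (it is essentially the computation behind \cite{WL}), but as written the proposal does not prove the lemma: you would need either to invoke that reference or to actually execute the fixed-locus analysis and the combinatorial identity you defer.
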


\begin{proof}
The dimension constraint (\ref{relvirdim}) requires that
\begin{equation}\label{dimcons}
\sum_{i=1}^m l_i-\mu+1=h.
\end{equation}
So if $\sum_{i=1}^m l_i-\mu+1\neq h$, we have
\[\Big\langle (\mu,H^{3-h})\Big |\prod_{i=1}^m\Bigr\{\prod_{k_i=0}^{l_i}(k_i\tau+[D_{\infty}])\Bigr\}\Big\rangle_{0,\mu[F]}=0.\]
We may assume $\sum_{i=1}^m l_i-\mu+1=h$ in the following.

We set
\[\overline{\mathcal{M}}_Y=\overline{\mathcal{M}}_{\Gamma}(Y,D_0)\]
where the relative data $\Gamma=(0,\mu[F],\{\mu\},m)$. It is a fiber bundle over $Q$ with fiber
\[\overline{\mathcal{M}}_{\mathbb{P}^1}=\overline{\mathcal{M}}_{\Gamma'}(\mathbb{P}^1,\mathbf{0}).\]
Here, $\Gamma'=(0,\mu[\mathbb{P}^1],\{\mu\},m)$. Let
\[\pi:\overline{\mathcal{M}}_Y\longrightarrow Q\]
be the projection map.

The fiber invariant
\[\Big\langle (\mu,H^{3-h})\Big |\prod_{i=1}^m\Bigr\{\prod_{k_i=0}^{l_i}(k_i\tau+[D_{\infty}])\Bigr\}\Big\rangle_{0,\mu[F]}\]
can be computed by
\begin{equation}\label{FBIfm}
\int_QH^{3-h}\pi_*\Big(\Bigr\{\prod_{i=1}^m\prod_{k_i=0}^{l_i}\big(k_i\psi_i+ev_i^*([D_{\infty}])\big)\Bigr\}\cap [\overline{\mathcal{M}}_Y]^{vir_{\pi}}\Big)
\end{equation}
according to \cite{MP}, Formula (4).

The $\mathbb{C}^*$-action of (\ref{caction}) induces a $\mathbb{C}^*$-action on $\overline{\mathcal{M}}_{\mathbb{P}^1}$ by composition.

According to the analysis in \cite{MP} Section 1.2, the push-forward
\[\pi_*\Big(\Bigr\{\prod_{i=1}^m\prod_{k_i=0}^{l_i}\big(k_i\psi_i+ev_i^*([D_{\infty}])\big)\Bigr\}\cap [\overline{\mathcal{M}}_Y]^{vir_{\pi}}\Big)\]
in (\ref{FBIfm}) can be computed by replacing $t$ in the equivariant integral
\[\Big\langle \mu\Big|\prod_{i=1}^m\Bigr\{\prod_{k_i=0}^{l_i}(k_i\tau+[\infty])\Bigr\}\Big\rangle_{0,\mu}^{T}:=\int_{[\overline{\mathcal{M}}_{\mathbb{P}^1}]_T^{vir}}\prod_{i=1}^m\prod_{k_i=0}^{l_i}\big(k_i\psi_i+ev_i^*([\infty])\big)\]
by $c_1(N_{Q/\mathbb{P}^4})$. Here, $[\overline{\mathcal{M}}_{\mathbb{P}^1}]_T^{vir}$ is the equivariant virtual fundamental class and the integral should be seen as equivariant push-forward to a point.

Combining equality (\ref{dimcons}) and the virtual dimension of $\overline{\mathcal{M}}_{\mathbb{P}^1}$, we may write
\[\Big\langle \mu\Big|\prod_{i=1}^m\Bigr\{\prod_{k_i=0}^{l_i}(k_i\tau+[\infty])\Bigr\}\Big\rangle_{0,\mu}^{T}\]
as $Ct^h$, where $C$ is some constant. So
\[\pi_*\Big(\Bigr\{\prod_{i=1}^m\prod_{k_i=0}^{l_i}\big(k_i\psi_i+ev_i^*([D_{\infty}])\big)\Bigr\}\cap [\overline{\mathcal{M}}_Y]^{vir_{\pi}}\Big)=C\big(c_1(N_{Q/\mathbb{P}^4})\big)^h.\]

From (\ref{FBIfm}), we may deduce that
\[\Big\langle (\mu,H^{3-h})\Big |\prod_{i=1}^m\Bigr\{\prod_{k_i=0}^{l_i}(k_i\tau+[D_{\infty}])\Bigr\}\Big\rangle_{0,\mu[F]}=5^{h+1}C.\]
So it remains to determine $C$.

By the assumption of Lemma \ref{typeI-g0}, we know that
\[\sum_{i\in I}l_i\leq \mu,~~~\forall I\subset\{1,2,\ldots,m\}\text{~s.t.~}|I|\leq m-2.\]
So by Theorem 1.6 in \cite{WL}, it is easy to see that
\[C={h+m-2\choose m-2}.\]
\end{proof}

We may also need to compute fiber invariant in the following form:
\[\Big\langle (\mu,H^{3-h})\Big |\prod_{i=1}^m\Bigr\{H^{n_i}\prod_{k_i=0}^{l_i}(k_i\tau+[D_{\infty}])\Bigr\}\Big\rangle_{0,\mu[F]}.\]
Since $\mu[F]$ is a fiber class, those evaluation maps satisfy
\[p\circ ev_i=\widetilde{ev}_1, ~~~\forall 1\leq i\leq m.\]
Now it is easy to deduce that
\[
\begin{aligned}
&\Big\langle (\mu,H^{3-h})\Big |\prod_{i=1}^m\Bigr\{H^{n_i}\prod_{k_i=0}^{l_i}(k_i\tau+[D_{\infty}])\Bigr\}\Big\rangle_{0,\mu[F]}=\\
&~~~~~~~~~~~~~~~~~~~~\Big\langle (\mu,H^{3-h+\sum_{i} n_i})\Big |\prod_{i=1}^m\Bigr\{\prod_{k_i=0}^{l_i}(k_i\tau+[D_{\infty}])\Bigr\}\Big\rangle_{0,\mu[F]}.
\end{aligned}
\]
\begin{corollary}\label{calfib-1}
\begin{equation*}
\Big\langle (s,H^m)\Big |\Big\{ H^n\prod_{k=0}^{r}(k\tau+[D_{\infty}])\Big\}\Big \rangle_{0,s[F]}=
\begin{cases}
\frac{5}{s}, & m+n=3, r=s-1,\\
0, & otherwise.
\end{cases}
\end{equation*}
\end{corollary}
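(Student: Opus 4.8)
The plan is to deduce this directly from Lemma \ref{typeI-g0} by specializing to the case of a single absolute marked point. First I would invoke the reduction identity displayed immediately before the corollary (valid because $s[F]$ is a fiber class, so that $p\circ ev_1=\widetilde{ev}_1$) in order to absorb the factor $H^n$ carried by the unique absolute insertion into the relative weight. This rewrites the left-hand side as
\[
\Big\langle (s,H^{m+n})\Big |\prod_{k=0}^{r}(k\tau+[D_{\infty}])\Big\rangle_{0,s[F]}.
\]
If $m+n>3$ then $H^{m+n}=0$ in $H^*(Q,\mathbb{Q})$, since $Q$ is a threefold, and the invariant vanishes; this is consistent with the ``otherwise'' case, so I may assume $m+n\leq 3$ and set $h=3-(m+n)\in\{0,1,2,3\}$, making the relative weight exactly $H^{3-h}$.

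Next I would apply Lemma \ref{typeI-g0} with the number of absolute marked points equal to $1$, with $\mu=s$ and $l_1=r$. Two points deserve attention. The dimension hypothesis $1-\mu+\sum_i l_i=h$ becomes $1-s+r=h$; if this fails, the invariant is $0$ by the first part of the lemma. The subset hypothesis $\sum_{i\in I}l_i\leq\mu$ for $|I|\leq m-2$ is \emph{vacuous} here, since with a single marked point there is no subset $I$ with $|I|\leq -1$; thus the lemma applies with no extra constraint.

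Under these conditions the lemma yields the value $5^{h+1}s^{-1}{h-1\choose -1}$. By convention (\ref{conven}), the symbol ${h-1\choose -1}$ equals $1$ when $h=0$ and equals $0$ for every $h\geq 1$; hence the invariant is nonzero only when $h=0$, in which case it equals $\frac{5}{s}$. Finally I would translate the two surviving conditions back into the statement: $h=0$ is exactly $m+n=3$, and then the dimension constraint $1-s+r=0$ is exactly $r=s-1$. This produces the value $\frac{5}{s}$ in the stated case and $0$ otherwise. I expect no genuine obstacle; the only care needed is in handling the degenerate binomial ${h-1\choose -1}$ through convention (\ref{conven}) and in checking that the subset hypothesis of Lemma \ref{typeI-g0} is vacuously satisfied when there is a single marked point.
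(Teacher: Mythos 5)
Your proposal is correct and follows exactly the route the paper intends: the paper's proof of Corollary \ref{calfib-1} is simply the remark that it "directly follows from Lemma \ref{typeI-g0}" (after absorbing $H^n$ into the relative weight via the identity displayed just before the corollary), and you have filled in precisely those details, including the correct handling of the vacuous subset hypothesis and of ${h-1\choose -1}$ under convention (\ref{conven}).
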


\begin{corollary}\label{calfib-2}
If $l_1+l_2=\mu-1+h$, then we have
\begin{equation*}
  \Big\langle (\mu,H^{3-h})\Big |\prod_{i=1}^2\Bigr\{\prod_{k_i=0}^{l_i}(k_i\tau+[D_{\infty}])\Bigr\}\Big\rangle_{0,\mu[F]}=5^{h+1}.
\end{equation*}

\end{corollary}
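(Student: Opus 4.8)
The plan is to obtain Corollary \ref{calfib-2} as the special case $m=2$ of Lemma \ref{typeI-g0}, since the integrand in the corollary is already in the precise form treated by that lemma (there are no extra $H^{n_i}$ factors to first reduce away, unlike in the situation preceding Corollary \ref{calfib-1}). First I would check that the stated hypothesis $l_1+l_2=\mu-1+h$ is exactly the dimension constraint $1-\mu+\sum_{i=1}^2 l_i=h$ occurring in Lemma \ref{typeI-g0}. This places us in the nonvanishing branch of the lemma rather than the vanishing one, so the closed evaluation applies.

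Next I would verify the combinatorial side condition of Lemma \ref{typeI-g0}, namely $\sum_{i\in I}l_i\leq \mu$ for every $I\subset\{1,2\}$ with $|I|\leq m-2=0$. For $m=2$ the only such index set is the empty set, and $\sum_{i\in\emptyset}l_i=0\leq\mu$ holds automatically because $\mu>0$. Hence the hypothesis is vacuously satisfied and imposes no restriction on $l_1,l_2$ beyond the dimension constraint; this is the one point worth flagging, and it is precisely why the corollary holds with no auxiliary assumption on the $l_i$.

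Finally I would substitute $m=2$ into the evaluation furnished by Lemma \ref{typeI-g0}:
\[
5^{h+1}\mu^{m-2}\binom{h+m-2}{m-2}=5^{h+1}\mu^{0}\binom{h}{0}=5^{h+1},
\]
where $\mu^{0}=1$ and, by the convention (\ref{conven}), $\binom{h}{0}=1$ for every $h\in\{0,1,2,3\}$. This yields the asserted value $5^{h+1}$ and completes the argument. I do not expect any genuine obstacle here: the corollary is a direct specialization of Lemma \ref{typeI-g0}, the real content having already been established in the proof of that lemma via the equivariant relative theory of $\mathbb{P}^1$ together with Theorem 1.6 of \cite{WL}.
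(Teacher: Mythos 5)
Your proof is correct and coincides with the paper's, which simply notes that Corollary \ref{calfib-2} follows directly from Lemma \ref{typeI-g0}; you have spelled out the specialization $m=2$, the identification of $l_1+l_2=\mu-1+h$ with the dimension constraint, the vacuity of the side condition $\sum_{i\in I}l_i\leq\mu$ for $|I|\leq 0$, and the evaluation $5^{h+1}\mu^{0}\binom{h}{0}=5^{h+1}$ accurately.
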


\begin{proof}
Corollary \ref{calfib-1}, \ref{calfib-2} directly follow from Lemma \ref{typeI-g0}.
\end{proof}

\begin{remark}
Corollary \ref{calfib-1} is a special case of \cite{Ga1}, Corollary 5.3.4.
\end{remark}

Let $\rho=\{(r_1,n_1),\ldots,(r_p,n_p)\}$ and $\zeta=\{(l_1,m_1),\ldots,(l_q,m_q)\}$. We set
\[
\begin{aligned}
&\mathcal{F}\big(\rho\big |\zeta\big)=\prod_{i=1}^p(r_i+1)\times\\
&\sum_{I_1,\ldots,I_p}\prod_{i=1}^p\Big\langle\Big(r_i+1,\frac{H^{3-n_i}}{5}\Big)\Big|\prod_{t_i\in I_i} \Big\{H^{m_{t_i}}\prod_{k_{t_i}=0}^{l_{t_i}}(k_{t_i}\tau+[D_{\infty}])\Big\}\Big\rangle_{0,(r_i+1)[F]}
\end{aligned}
\]
where $I_1\sqcup \ldots I_p=\{1,2,\ldots,q\}$. The above invariants will naturally appear in the degeneration formula.

\begin{lemma}\label{discon-fbv}
If all the pairs $(r_i,n_i)\neq (0,0)$, then
\begin{itemize}
\item[(1)] $\mathcal{F}\big(\rho\big |\zeta\big)=0$, if $p>q$;
\item[(2)] $\mathcal{F}\big(\rho\big |\zeta\big)=0$, if $p=q$, but $\rho\neq\zeta$ as sets;
\item[(3)] $\mathcal{F}\big(\rho\big |\zeta\big)=|Aut(\rho)|=|Aut(\zeta)|$, if $\rho=\zeta$,
where $Aut(\rho)$ (resp. $Aut(\zeta)$) is the group of permutation symmetries of $\rho$ (resp. $\zeta$).
\end{itemize}
\end{lemma}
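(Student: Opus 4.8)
The plan is to evaluate $\mathcal{F}(\rho\,|\,\zeta)$ factor by factor and then control the combinatorics of the sum over ordered partitions $I_1\sqcup\cdots\sqcup I_p=\{1,\ldots,q\}$. The decisive preliminary observation, and the one place the hypothesis $(r_i,n_i)\neq(0,0)$ is really used, is that \emph{no factor with an empty index set $I_i$ can survive}. Indeed, if $I_i=\emptyset$ then the dimension constraint (\ref{relvirdim}) — equivalently (\ref{dimcons}) with $\mu=r_i+1$ and $h=n_i$, since the relative insertion is $H^{3-n_i}/5$ — reads $-r_i=n_i$, which forces $r_i=n_i=0$ and is excluded. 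Hence any term of the sum in which some block $I_i$ is empty vanishes identically. Claim (1) is then immediate: if $p>q$, the pigeonhole principle shows that in every assignment $I_1\sqcup\cdots\sqcup I_p=\{1,\ldots,q\}$ at least one block is empty, so every term vanishes and $\mathcal{F}(\rho\,|\,\zeta)=0$.

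For $p=q$ (which covers both (2) and (3)), the requirement $|I_i|\geq 1$ for all $i$ together with $\sum_i|I_i|=q=p$ forces every block to be a singleton, so the surviving terms are indexed by bijections $\sigma\colon\{1,\ldots,p\}\to\{1,\ldots,q\}$ with $I_i=\{\sigma(i)\}$. Each factor is then a single-insertion fiber invariant, to which Corollary \ref{calfib-1} applies unconditionally (the auxiliary inequality of Lemma \ref{typeI-g0} is vacuous when $|I_i|=1$, as $m-2=-1$). Absorbing $H^{m_{\sigma(i)}}$ into the relative class and reading off Corollary \ref{calfib-1} with $s=r_i+1$, the $i$th factor — including the $\tfrac{1}{5}$ coming from $H^{3-n_i}/5$ — equals $\tfrac{1}{r_i+1}$ exactly when $(l_{\sigma(i)},m_{\sigma(i)})=(r_i,n_i)$ and vanishes otherwise. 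The prefactor $\prod_i(r_i+1)$ therefore cancels $\prod_i\tfrac{1}{r_i+1}$, so each bijection $\sigma$ matching all pairs contributes precisely $1$ to $\mathcal{F}(\rho\,|\,\zeta)$.

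It then only remains to count matching bijections. If $\rho\neq\zeta$ (as multisets of pairs) there is no $\sigma$ with $(l_{\sigma(i)},m_{\sigma(i)})=(r_i,n_i)$ for all $i$, hence $\mathcal{F}(\rho\,|\,\zeta)=0$, which is (2); and if $\rho=\zeta$, the matching bijections are exactly the permutations preserving the multiset of pairs, of which there are $|Aut(\rho)|=|Aut(\zeta)|$, each contributing $1$, giving (3). All the geometric input is packaged in Lemma \ref{typeI-g0} and Corollary \ref{calfib-1}, so no new vanishing arguments are needed; the only step demanding care is the bookkeeping in this last paragraph — translating ordered set partitions into bijections and then into the automorphism count — together with consistent tracking of the $\tfrac{1}{5}$ normalization and the cancellation against $\prod_i(r_i+1)$.
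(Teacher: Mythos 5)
Your proposal is correct and follows essentially the same route as the paper: the empty-block vanishing via the dimension constraint (which is where $(r_i,n_i)\neq(0,0)$ enters), reduction to singleton blocks when $p=q$, evaluation of each factor by Corollary \ref{calfib-1} so that the prefactor $\prod_i(r_i+1)$ cancels, and the final count of matching bijections giving $|Aut(\rho)|$. No gaps.
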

\begin{proof}

(1) If $p>q$, then some $I_j$ must be empty. The corresponding connected invariant is
\[\Big\langle\Big(r_j+1,\frac{H^{3-n_j}}{5}\Big)\Big|\Big\rangle_{0,(r_j+1)[F]}.\]
The dimension constraint (\ref{relvirdim}) requires $r_j+n_j=0$. It forces $(r_j,n_j)=(0,0)$. Since $(r_i,n_i)\neq (0,0)$ for all $i$, the above invariant is zero. So $\mathcal{F}=0$.

(2),(3) If $p=q$, according to the discussion in (1), we only need to consider those partitions $I_1\sqcup \ldots I_p=\{1,2,\ldots,q\}$ with $|I_i|=1$ for all $i$. Suppose that $I_i=\{t_i\}$, the corresponding contribution to $\mathcal{F}$ is
\[\prod_{i=1}^p(r_i+1)\Big\langle\Big(r_i+1,\frac{H^{3-n_i}}{5}\Big)\Big| \Big\{H^{m_{t_i}}\prod_{k_{t_i}=0}^{l_{t_i}}(k_{t_i}\tau+[D_{\infty}])\Big\}\Big\rangle_{0,(r_i+1)[F]}.\]
By Corollary \ref{calfib-1}, it equals to $1$ if $(r_i,n_i)=(l_{t_i},m_{t_i})$ for all $i$, and $0$ otherwise.

So if
\[\{(r_1,n_1),\ldots,(r_p,n_p)\}\neq \{(l_1,m_1),\ldots,(l_q,m_q)\},\]
then we have $\mathcal{F}=0$. If
\[\{(r_1,n_1),\ldots,(r_p,n_p)\}= \{(l_1,m_1),\ldots,(l_q,m_q)\},\]
then the total contribution is $|Aut(\rho)|$ (or $|Aut(\zeta)|$).
\end{proof}

Let $Id$ be the identity element of $H^*(Q,\mathbb{Q})$. $\iota:Q\hookrightarrow \mathbb{P}^4$ is the natural inclusion. Similarly, we treat
\[H^{m}\prod_{k=0}^{l}(k\tau+\iota_*(Id))\]
as one single insertion.

Let $\iota_{\infty}:Q\rightarrow Y$ be the section which is determined by $D_{\infty}$. $\alpha_{\infty}$ is the push-forward of $\alpha$ under  $\iota_{\infty}$.

In order to apply degeneration formula (\ref{degfm}) to the pair $(V,W)=(\mathbb{P}^4,Q)$, we need to lift cohomology classes of $\mathbb{P}^4$. We use notations from Section \ref{ep-defm}. Let $\hat{\iota}:Q\times \mathbb{C}\hookrightarrow \mathcal{B}$ be the inclusion through strict transform.

We lift $\iota_*(H^s)$ to $\hat{\iota}_*(H^s\otimes \mathbf{1}_{\mathbb{C}})$ where $\mathbf{1}_{\mathbb{C}}$ is the identity element of $H^*(\mathbb{C})$. The central fiber of $\mathcal{B}$ is $\mathbb{P}^4\cup Y$. It is easy to see that $\hat{\iota}_*(H^s\otimes \mathbf{1}_{\mathbb{C}})$ becomes $0$ when restricted to $\mathbb{P}^4$, $H^s\cdot[D_{\infty}]$ when restricted to $Y$.

So we have
\begin{equation}\label{appdegfm}
\begin{aligned}
&\Big\langle\prod_{i=1}^s\Big\{H^{m_i}\prod_{k_i=0}^{l_i}(k_i\tau+\iota_*(Id))\Big\}\Big\rangle_{g,d}^{\mathbb{P}^4}=\\
&~~~~~\sum_{\eta}C_{\eta}\sum_{[(G^r_1,G^r_2,I)]}\Big\langle\Big|\eta\Big\rangle_{G^r_1}^{\bullet,\mathbb{P}^4,Q}\Big\langle\check{\eta}\Big|\prod_{i=1}^s \Big\{H^{m_i}\prod_{k_i=0}^{l_i}(k_i\tau+[D_{\infty}])\Big\}\Big\rangle_{G_2^r}^{\bullet}.
\end{aligned}
\end{equation}
Here, we identify classes of the form $H^m\big(\iota_*(Id)\big)^k$ on the L.H.S of (\ref{appdegfm}) with $5^{k-1}\iota_*(H^{m+k-1})$. We also identify $5^{k-1}H^{m+k-1}\cdot [D_{\infty}]$ with $H^m[D_{\infty}]^k$ on the R.H.S of (\ref{appdegfm}) using the fact that $[D_{\infty}]^2=5H\cdot[D_{\infty}]$.

Since those $(l_i,m_i)=(0,0)$ can be removed by applying divisor equation on both sides of (\ref{appdegfm}), we may assume that $(l_j,m_j)\neq (0,0)$ for all $j$. We need $m_i\leq 3$. Otherwise, both sides of (\ref{appdegfm}) vanish.

(A) Suppose that there is a vertex $v^{(1)}\in G_1^r$ satisfying
\[\big(g(v^{(1)}),\beta(v^{(1)})\big)=(g,d[l])\]
where $[l]$ is the curve class of a line in $\mathbb{P}^4$. Firstly, we claim that there will be only one vertex in $G_1^r$. We will show it by contradiction.

If there is another vertex $\tilde{v}^{(1)}\in G_1^r$, then we claim that $\beta(\tilde{v}^{(1)})\neq 0$. The reason is as follows.

Let $\Sigma_1$ (resp. $\Sigma_2$) be a disjoint union of connected curves corresponding to vertices in $G_1^r$ (resp. $G_2^r$). Then if we glue $\Sigma_1$ and $\Sigma_2$ along relative marked points, it yields a connected curve. Let $\Sigma_{\tilde{v}^{(1)}}$ be the connected component corresponding to vertex $\tilde{v}^{(1)}$. Then $\Sigma_{\tilde{v}^{(1)}}$ must contain some relative marked points which mapped to $Q$. So $\beta(\tilde{v}^{(1)})\cdot [Q]\neq 0$. It further implies that $\beta(\tilde{v}^{(1)})\neq 0$.

By the constraint (\ref{dgfm-degcons}) of degeneration formula, we need
\begin{equation}\label{curcls1}
d[l]=\sum_{v\in G_1^r}\beta(v)+\sum_{v\in G_2^r}(\iota\circ p)_*\big(\beta(v)\big)
\end{equation}
where $p:Y\rightarrow Q$ is the natural projection map and $\iota:Q\hookrightarrow \mathbb{P}^4$ is the natural inclusion.

So we have
\[d[l]<\beta(v^{(1)})+\beta(\tilde{v}^{(1)})\leq \sum_{v\in G_1^r}\beta(v)\leq d[l].\]

The contradiction implies that $G_1^r$ only contains one vertex $v^{(1)}$. So the part
\[\Big\langle\Big|\eta\Big\rangle_{G^r_1}^{\bullet,\mathbb{P}^4,Q}\]
on the R.H.S of (\ref{appdegfm}) can be written as
\[\Big\langle\Big|\eta\Big\rangle_{g,d}^{\mathbb{P}^4,Q}.\]

As for those vertices in $G_2^r$, we have
\[\sum_{v\in G_2^r}(\iota\circ p)_*\big(\beta(v)\big)=0\]
by (\ref{curcls1}). Since $\iota_*$ is injective, we have $p_*(\beta(v))=0$ for each vertex $v\in G_2^r$. So each $\beta(v)$ is a fiber class.

Since the gluing domain, which is given by gluing $\Sigma_1$ and $\Sigma_2$ along relative marked points, is a connected genus $g$ curve. The condition $g(v^{(1)})=g$ implies that each vertex $v\in G_2^r$ satisfies $g(v)=0$, and only one relative marked point is assigned to $v$. So there are exactly $l(\eta)$ vertices in $G_2^r$. We set $\{v_1^{(2)},\ldots,v_{l(\eta)}^{(2)}\}$ to be those vertices. Without losing of generality, we may assume that relative marked point $y_i$ is assigned to $v_i^{(2)}$.

Recall that $\eta=\{(\eta_i,\delta_{r_i})\}_{i=1}^{l(\eta)}$ and $\check{\eta}=\{(\eta_i,\delta^{r_i})\}_{i=1}^{l(\eta)}$. The contact order assigned to relative marked point $y_i$ is $\eta_i$.

The contribution of such triple $(G_1^r,G_2^r,\eta)$ to the R.H.S of (\ref{appdegfm}) becomes
\begin{equation}\label{contr-A}
C_{\eta}\Big\langle\Big|\eta\Big\rangle_{g,d}^{\mathbb{P}^4,Q}\Big\langle\check{\eta}\Big|\prod_{i=1}^s \Big\{H^{m_i}\prod_{k_i=0}^{l_i}(k_i\tau+[D_{\infty}])\Big\}\Big\rangle_{G_2^r}^{\bullet},
\end{equation}
where $C_{\eta}=|Aut(\eta)|\prod_{i=1}^{l(\eta)}\eta_i$ and
\[
\begin{aligned}
\Big\langle\check{\eta}\Big|&\prod_{i=1}^s \Big\{H^{m_i}\prod_{k_i=0}^{l_i}(k_i\tau+[D_{\infty}])\Big\}\Big\rangle_{G_2^r}^{\bullet}=\\
&\frac{1}{|Aut(\eta)|}\prod_{i=1}^{l(\eta)}\Big\langle\Big(\eta_i,\delta^{r_i}\Big)\Big|\prod_{t_i\in I_i} \Big\{H^{m_{t_i}}\prod_{k_{t_i}=0}^{l_{t_i}}(k_{t_i}\tau+[D_{\infty}])\Big\}\Big\rangle_{0,\eta_i[F]}.
\end{aligned}
\]
Here, $I_1,\ldots, I_{l(\eta)}$ form a partition of $\{1,2\ldots,s\}$ which are determined by the assignment of absolute marked points in $G_2^r$. So
\[
\begin{aligned}
C_{\eta}\Big\langle\check{\eta}\Big|&\prod_{i=1}^s \Big\{H^{m_i}\prod_{k_i=0}^{l_i}(k_i\tau+[D_{\infty}])\Big\}\Big\rangle_{G_2^r}^{\bullet}=\\
&\prod_{i=1}^{l(\eta)}\eta_i\Big\langle\Big(\eta_i,\delta^{r_i}\Big)\Big|\prod_{t_i\in I_i} \Big\{H^{m_{t_i}}\prod_{k_{t_i}=0}^{l_{t_i}}(k_{t_i}\tau+[D_{\infty}])\Big\}\Big\rangle_{0,\eta_i[F]}.
\end{aligned}
\]

If $(\eta_i,\delta_{r_i})=(1,Id)$, then $(\eta_i,\delta^{r_i})=(1,\frac{H^3}{5})$. The corresponding connected fiber invariant becomes
\[\Big\langle\Big(1,\frac{H^3}{5}\Big)\Big|\prod_{t_i\in I_i} \Big\{H^{m_{t_i}}\prod_{k_{t_i}=0}^{l_{t_i}}(k_{t_i}\tau+[D_{\infty}])\Big\}\Big\rangle_{0,[F]}.\]
The dimension constraint requires
\[\sum_{t_i\in I_i}(l_{t_i}+m_{t_i})=0.\]
Since $(l_{t_i},m_{t_i})\neq (0,0)$, it implies that $I_i$ is empty. Now fiber invariant can be computed by divisor equation and Corollary \ref{calfib-1}, i.e.
\[\Big\langle\Big(1,\frac{H^3}{5}\Big)\Big|\Big\rangle_{0,[F]}=\Big\langle\Big(1,\frac{H^3}{5}\Big)\Big|[D_{\infty}]\Big\rangle_{0,[F]}=1.\]

The dimension constraint requires $deg(\delta^{r_i})$ to be even. So all $deg(\delta_{r_i})$ are also even. We may rewrite $\eta$ as
\begin{equation*}
\big\{(l_1'+1,H^{m_1'}),\ldots,(l_{s'}'+1,H^{m_{s'}'}),(1,Id)^{Id(\eta)}\big\},
\end{equation*}
where $Id(\eta)$ is uniquely determined by the constraint $\sum_{i}(l_i'+1)+Id(\eta)=5d$.

We assume that the standard order becomes
\[(l+1,H^m)>(l'+1,H^{m'}),~\text{if}~l>l'~\text{or}~l=l',~m>m'\]
when restricted to weighted pairs of the form $(l+1,H^m)$.

If we vary $G_2^r$ but fix $\eta$ in (\ref{contr-A}), then the total contribution of those graphs to the degeneration formula (\ref{appdegfm}) becomes
\[R_{g,d}(\varrho)\mathcal{F}(\varrho|\zeta),\]
where we set $\varrho=\{(l_1',m_1'),\ldots,(l_{s'}',m_{s'}')\}$, $\zeta=\{(l_1,m_1)\ldots,(l_s,m_s)\}$ and
\[
\begin{aligned}
R_{g,d}(\varrho)=\Big\langle\Big|\eta\Big\rangle_{g,d}^{\mathbb{P}^4,Q}.
\end{aligned}
\]

Let $\{(a_i,b_i)\}_{i=1}^r$ be set of integer pairs. We give a standard order for integer pairs by
\[(a,b)>(a',b'),~\text{if}~a>a'~~~\text{or}~~~a=a',~b>b'.\]
Without further explanation, we always write integer pairs in decreasing order by standard order.

The dimension constraint for relative invariant $R_{g,d}$ requires
\[\sum (l_i'+m_i')=5d+1-g.\]
We note that $\sum(l_i'+1)+Id(\eta)=5d$. So $\sum(l_i'+1)\leq 5d$. Now we may deduce that $\varrho\in S_{g,d}'$, where $S_{g,d}'$ is given by definition \ref{maindf-2}.

The total contribution of case (A) can be written as
\[\sum_{\varrho\in S_{g,d}'}R_{g,d}(\varrho)\mathcal{F}(\varrho|\zeta).\]

(B) Suppose that there is a vertex $v^{(2)}\in G_2^r$ satisfying
\[\Big(g(v^{(2)}),(\iota\circ p)_*\big(\beta(v^{(2)})\big)\Big)=(g,d[l]).\]
Then by (\ref{curcls1}), we know that $\beta(v)=0$ for each vertex $v\in G_1^r$. But using the same argument as in the proof of case (A), we can also show that $\beta(v)\neq 0$ for each vertex $v\in G_1^r$. The contradiction implies that there will be no vertices in $G_1^r$. So $\eta$ is empty. The connectivity of gluing curve further implies that there are no other vertices in $G_2^r$.

It is easy to see that
\[\beta(v^{(2)})=(\iota_{\infty}\circ p)_*(\beta(v^{(2)}))+(\beta(v^{(2)})\cdot [D_0])[F].\]
Since $\eta$ is empty, we know that $\beta(v^{(2)})\cdot [D_0]=0$. Since
\[(\iota\circ p)_*(\beta(v^{(2)}))=d[l],\]
we have
\[p_*(\beta(v^{(2)})=d\alpha.\]
So
\[(\iota_{\infty}\circ p)_*(\beta(v^{(2)}))=d\alpha_{\infty}.\]
Now we may summarize that $\beta(v^{(2)})=d\alpha_{\infty}$.

The contribution of such triple $(G_1^r,G_2^r,\eta)$ to the R.H.S of (\ref{appdegfm}) becomes
\[
\Big\langle\Big|\prod_{i=1}^s\Big\{H^{m_i}\prod_{k_i=0}^{l_i}(k_i\tau+[D_{\infty}])\Big\}\Big\rangle_{g,d\alpha_{\infty}},
\]
where we omit the weighted cohomology partition since it is empty. We denote the above type I invariant as $B_{g,d}\big(\zeta\big)$, where $\zeta=\{(l_1,m_1),\ldots,(l_s,m_s)\}$ is the same as in case (A).

(C) For the rest of terms on the R.H.S of (\ref{appdegfm}), each vertex $v^{(1)}$ of $G_1^r$ (we suppose that $g(v^{(1)})=g_1$ and $\beta(v^{(1)})=d_1[l]$)
satisfies the condition that $(g_1,d_1)<(g,d)$. Here, the partial ordering for the pairs $(g,d)$ is given by
\[(g',d')<(g,d)~ \text{if either}~ g'<g, ~d'\leq d~\text{or}~ g'\leq g,~ d'<d. \]
Each vertex $v^{(2)}$ of $G_2^r$ (we suppose that $g(v^{(2)})=g_1$ and $\beta(v^{(2)})=\beta_2$) satisfies the condition that  $(g_2,d_2)<(g,d)$ (we set $(\iota\circ p)_*(\beta_2)=d_2[l]$). So by Lemma \ref{rl-ab}, they all can be determined by $N_{g',d'}$ with $(g',d')<(g,d)$.

We set
\[
A_{g,d}\big(\zeta \big)=\Big\langle\prod_{i=1}^s\Big\{H_i^{m_i}\prod_{k_1=0}^{l_i}(k_i\tau+\iota_*(Id))\Big\}\Big\rangle_{g,d}^{\mathbb{P}^4}.
\]
The dimension constraint for $A_{g,d}$ plus the assumption $(l_i,m_i)\neq (0,0)$ imply
\[\zeta=\{(l_1,m_1),\ldots,(l_s,m_s)\}\in S_{g,d},\]
where $S_{g,d}$ is given by definition \ref{maindf-1}.

From the above discussion, we may conclude that
\begin{equation}\label{cor-degfm}
A_{g,d}(\zeta)=\sum_{\varrho\in S_{g,d}'}R_{g,d}(\varrho)\mathcal{F}(\varrho|\zeta)+B_{g,d}(\zeta)+NPT.
\end{equation}
Here and in the below, we use "$NPT$" to denote those terms which can be determined by $N_{g',d'}$ with $(g',d')<(g,d)$.

For each $\rho\in S_{g,d}'$, we may similarly deduce from the degeneration formula that
\begin{equation}\label{substerm}
A_{g,d}(\rho)=\sum_{\varrho\in S_{g,d}'}R_{g,d}(\varrho)\mathcal{F}(\varrho|\rho)+B_{g,d}(\rho)+NPT.
\end{equation}

If we give a partial ordering on $S_{g,d}'$ according to the number of integer pairs, then the $|S_{g,d}'|\times |S_{g,d}'|$ matrix formed by $\mathcal{F}(\varrho|\rho)$ becomes upper triangle with nonzero elements on the main diagonal by Lemma \ref{discon-fbv}. So for fixed $\zeta\in S_{g,d}$, there exist constants $C_{\rho}$ such that
\begin{equation}\label{sol-rel}
\mathcal{F}(\varrho|\zeta)=\sum_{\rho\in S_{g,d}'}\mathcal{F}(\varrho|\rho)C_{\rho},~~~\forall \varrho\in S_{g,d}'.
\end{equation}

Combing (\ref{cor-degfm}), (\ref{substerm}) and (\ref{sol-rel}), we have
\begin{equation}\label{key-rel}
A_{g,d}(\zeta)-\sum_{\rho\in S_{g,d}'}A_{g,d}(\rho)C_{\rho}=B_{g,d}(\zeta)-\sum_{\rho\in S_{g,d}'}B_{g,d}(\rho)C_{\rho}+NPT.
\end{equation}

We may conclude that

\begin{theorem}\label{keythm}
For fixed $d>0$, $g\geq 0$ and $\zeta\in S_{g,d}$, there exist constants $C_{\rho}$ which satisfy (\ref{key-rel}). Those constants $C_{\rho}$ can be uniquely determined by relations (\ref{sol-rel}).
\end{theorem}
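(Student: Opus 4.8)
The plan is to treat Theorem \ref{keythm} as the formal extraction of the unknown relative invariants $R_{g,d}(\varrho)$, $\varrho\in S_{g,d}'$, from the degeneration identities already assembled above, via a single triangular linear-algebra step. All the geometry has been packaged into cases (A)--(C): applying the degeneration formula (\ref{appdegfm}) to the target $\zeta$ produces the master identity (\ref{cor-degfm}), and applying it to each $\rho\in S_{g,d}'$ produces (\ref{substerm}). In both, the only terms that are neither the type I invariant $B_{g,d}$ nor a non-principal term NPT (controlled by $N_{g',d'}$ with $(g',d')<(g,d)$ through Lemma \ref{rl-ab}) are the relative contributions $\sum_{\varrho\in S_{g,d}'}R_{g,d}(\varrho)\mathcal{F}(\varrho|\cdot)$, which depend linearly on the unknowns $R_{g,d}(\varrho)$ with fixed, explicitly computed coefficients $\mathcal{F}(\varrho|\cdot)$.

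The core of the argument is therefore the linear algebra encoded in (\ref{sol-rel}). First I would fix the partial order on $S_{g,d}'$ given by the number of integer pairs and form the square matrix $M=\big(\mathcal{F}(\varrho|\rho)\big)_{\varrho,\rho\in S_{g,d}'}$. Lemma \ref{discon-fbv} shows $M$ is upper triangular with diagonal entries $\mathcal{F}(\rho|\rho)=|Aut(\rho)|\neq 0$: part (1) forces $\mathcal{F}(\varrho|\rho)=0$ whenever $\varrho$ has strictly more pairs than $\rho$, part (2) kills the off-diagonal entries among partitions of equal length, and part (3) pins down the diagonal. Hence $M$ is invertible. For the fixed $\zeta\in S_{g,d}$, the column $\big(\mathcal{F}(\varrho|\zeta)\big)_{\varrho\in S_{g,d}'}$ then lies in the image of $M$, so the system (\ref{sol-rel}) has the unique solution $(C_\rho)_{\rho\in S_{g,d}'}=M^{-1}\big(\mathcal{F}(\varrho|\zeta)\big)_\varrho$. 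This is exactly the asserted existence and uniqueness of the constants $C_\rho$.

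With the $C_\rho$ so determined, I would finish by subtracting from (\ref{cor-degfm}) the sum over $\rho\in S_{g,d}'$ of $C_\rho$ times the identity (\ref{substerm}). On the relative side the coefficient of each $R_{g,d}(\varrho)$ becomes $\mathcal{F}(\varrho|\zeta)-\sum_{\rho}\mathcal{F}(\varrho|\rho)C_\rho$, which vanishes identically by (\ref{sol-rel}); thus every unknown $R_{g,d}(\varrho)$ cancels. The $A_{g,d}$ and $B_{g,d}$ terms reorganize into the two sides of (\ref{key-rel}), while any finite rational combination of NPT terms is again NPT. This yields (\ref{key-rel}) and completes the proof.

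The step I expect to be the real content is the triangularity and nonvanishing of the diagonal of $M$, i.e.\ the bookkeeping behind Lemma \ref{discon-fbv}: one must confirm that ordering $S_{g,d}'$ by length genuinely makes $M$ triangular, so that solvability is automatic rather than a genericity hypothesis, and that the diagonal entry $|Aut(\rho)|$ never degenerates. The remaining manipulations---reading off cases (A)--(C) and performing the linear cancellation---are formal once that structural fact is in place.
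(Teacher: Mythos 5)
Your proposal is correct and follows essentially the same route as the paper: both extract (\ref{cor-degfm}) and (\ref{substerm}) from cases (A)--(C) of the degeneration formula, order $S_{g,d}'$ by the number of integer pairs so that Lemma \ref{discon-fbv} makes the matrix $\big(\mathcal{F}(\varrho|\rho)\big)$ upper triangular with nonzero diagonal $|Aut(\rho)|$, solve (\ref{sol-rel}) uniquely, and cancel the unknown $R_{g,d}(\varrho)$ terms to obtain (\ref{key-rel}). You have also correctly identified the triangularity of that matrix as the genuine content of the argument.
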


In the case of $g=0,1$, Gathmann \cite{Ga1} tries to evaluate $A_{g,d}\big(\zeta_{g,d}\big)$ by degeneration formula with $\zeta_{g,d}=\{(5d,1-g)\}$. His method actually shows that
\[A_{g,d}\big(\zeta_{g,d}\big)=B_{g,d}\big(\zeta_{g,d}\big)+NPT\]
and
\[
B_{g,d}\big(\zeta_{g,d}\big)=
\begin{cases}
dN_{0,d},~~ \text{if}~ g=0,\\
5d(5d+1)N_{1,d},~~ \text{if}~ g=1.
\end{cases}
\]
Then he recursively solve the equations
\[B_{g,d}\big(\zeta_{g,d}\big)=A_{g,d}\big(\zeta_{g,d}\big)-NPT\]
to get $N_{0,d}$ and $N_{1,d}$.

Here the method of Gathmann to derive type I invariant $B_{g,d}\big(\zeta_{g,d}\big)$ is different from that of Maulik-Pandharipande. We may directly compute $B_{g,d}\big(\zeta_{g,d}\big)$ by using Maulik-Pandharipande's algorithm. The results should be the same.

Once we know $N_{0,d}$ and $N_{1,d}$ for all $d$, the relative invariants of $(\mathbb{P}^4,Q)$ for $g=0,1$ can be determined by Lemma \ref{rl-ab}. The Conjecture \ref{conj} for $g=0,1$ follows.

In the case of $g=2,3$, we try to generalize the above method and prove that
\begin{theorem}\label{keythm2}
For each pair $(g,d)$ with $g=2,3$ and $d>0$, there always exists $\zeta_{g,d}\in S_{g,d}$ such that term
\[B_{g,d}(\zeta_{g,d})-\sum_{\rho\in S_{g,d}'}B_{g,d}(\rho)C_{\rho}\]
on the R.H.S of (\ref{key-rel}) equals to $C_{g,d}N_{g,d}$ with $C_{g,d}\neq 0$, where those $C_{\rho}$ are determined by (\ref{sol-rel}).
\end{theorem}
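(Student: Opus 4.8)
The plan is to compute every type I invariant $B_{g,d}(\rho)$ occurring in (\ref{mainrl-2}) by the relative virtual localization formula (\ref{lc-0}) applied to the fiberwise $\mathbb{C}^*$-action on $Y$ which fixes the two sections $D_0$ and $D_\infty$. Since $B_{g,d}(\rho)$ carries curve class $d\alpha_\infty$ and has empty relative condition along $D_0$, the $\mathbb{C}^*$-fixed loci of $\overline{\mathcal{M}}_{\Gamma}(Y,D_0)$ consist of stable maps whose non-fiber components are contracted onto $D_0\cong Q$ or $D_\infty\cong Q$, linked by multiple covers of fibers and possibly a rubber over $D_0$. First I would single out the \emph{principal} fixed locus, on which the entire genus $g$, degree $d$ stable map factors through $D_\infty$; every other fixed locus either distributes the genus or the degree onto $Q$ nontrivially, and so contributes only invariants of $Q$ with $(g',d')<(g,d)$, or else carries a fiber-covering configuration whose weight is governed by the genus $0$ fiber invariants of Lemma \ref{typeI-g0} and Corollaries \ref{calfib-1} and \ref{calfib-2}.

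On the principal locus the localization residue is a Hodge integral over $[\overline{\mathcal{M}}_{g,s}(Q,d)]^{vir}$: the insertions $H^{m_i}\prod_{k_i=0}^{l_i}(k_i\tau+[D_\infty])$ restrict to explicit polynomials in $H$, the $\psi_i$ and the equivariant parameter (which specializes to $c_1(N_{Q/\mathbb{P}^4})=5H$), while the equivariant Euler class of the virtual normal bundle supplies the Hodge classes $\lambda_j$ together with a denominator coming from $N_{D_\infty/Y}$. The key step is then to invoke the virtual push-forward properties of Lemmas \ref{vpfp-1} and \ref{vpfp-2} to push this integral forward from $\overline{\mathcal{M}}_{g,s}(Q,d)$ (after eliminating the descendent and cohomology insertions by the string, divisor and dilaton equations, exactly as dimension constraint (\ref{virdim-Q}) permits), which collapses the principal contribution to a single multiple $C_{g,d}N_{g,d}$. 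For this to be nonzero I must take $\zeta_{g,d}\in S_{g,d}\setminus S'_{g,d}$: if $\zeta_{g,d}\in S'_{g,d}$ then by the upper-triangularity of the matrix $\mathcal{F}$ in Lemma \ref{discon-fbv} the unique solution of (\ref{sol-rel}) is $C_\rho=\delta_{\rho,\zeta_{g,d}}$, and the whole combination (\ref{mainrl-2}) is identically zero. The constraint $\sum(a_i+1)>5d$ with $\sum(a_i+b_i)=5d+1-g$ is solvable precisely when $d\ge (2g-1)/5$, which holds for all $d>0$ when $g=2,3$.

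It remains to organize the non-principal loci. They enter with the same genus $0$ fiber-invariant weights that produced the $\mathcal{F}(\varrho\mid\rho)$ in (\ref{substerm}), so I expect the defining relations (\ref{sol-rel}) for the $C_\rho$ to force their cancellation in $B_{g,d}(\zeta_{g,d})-\sum_{\rho\in S'_{g,d}}B_{g,d}(\rho)C_\rho$, leaving exactly $C_{g,d}N_{g,d}$; checking that this cancellation is clean (and that no stray $(g',d')<(g,d)$ contribution survives) is one delicate bookkeeping point. The genuine obstacle, however, is the final nonvanishing $C_{g,d}\neq 0$: the leading coefficient is a genus-specific $\lambda_g\lambda_{g-1}$-type Hodge integral over $\overline{\mathcal{M}}_g$ weighted by the combinatorics of the chosen $\zeta_{g,d}$, and I expect to prove it is nonzero only by evaluating these integrals explicitly for $g=2$ and $g=3$. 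This is exactly the place where the two cases remain tractable and where the argument fails to extend uniformly to higher genus, consistent with the threshold $d\ge(2g-1)/5$ and Conjecture \ref{maincj}.
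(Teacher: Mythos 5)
Your overall skeleton matches the paper's: choose $\zeta_{g,d}\in S_{g,d}\setminus S'_{g,d}$ (your dimension count showing this forces $d\ge(2g-1)/5$ is exactly the paper's observation in Section \ref{section-4}), apply relative virtual localization to each $B_{g,d}(\rho)$, and use the virtual push-forward properties of Lemmas \ref{vpfp-1} and \ref{vpfp-2} to conclude that each $B_{g,d}(\rho)$ is a scalar multiple of $N_{g,d}$. However, there are genuine gaps in how you propose to evaluate the scalars, and they sit precisely where the paper's real work is.

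First, your accounting of the fixed loci is off. The loci that distribute genus or degree between $V_0$ and $V_\infty$ do \emph{not} contribute invariants with $(g',d')<(g,d)$: they are killed outright, because $\pi\circ\tau_{G^l_0}$ pushes them into classes of positive virtual codimension in $\overline{\mathcal{M}}_{g,0}(Q,d)$ and Lemmas \ref{vpfp-1}, \ref{vpfp-2} then give zero. This matters, since the theorem asserts the combination equals $C_{g,d}N_{g,d}$ exactly, with no lower-order remainder. Conversely, the composite fixed loci carrying the full genus and degree on the rubber over $D_0$ (with contracted rational tails at $D_\infty$) are \emph{not} negligible and are not governed by the genus $0$ fiber invariants of Lemma \ref{typeI-g0}: for $|\zeta|=3$ they contribute large polynomial terms to $C_{3,d}$ (cases (II)(b)--(e) of Lemma \ref{typeI-nf3}), and evaluating them requires the auxiliary trick of Lemma \ref{keytrick} --- computing a second invariant in two independent ways to extract the rubber contribution --- together with the Lambert-$W$ combinatorial identities of Appendix C. Your proposal does not foresee this step, and it is the technical heart of the proof. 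Relatedly, your hope that the relations (\ref{sol-rel}) force a structural cancellation of non-principal contributions is not the mechanism: each $B_{g,d}(\rho)$ is computed as an explicit multiple of $N_{g,d}$ and the linear combination is simply summed.

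Second, the final nonvanishing is not a $\lambda_g\lambda_{g-1}$-type Hodge integral. On the simple fixed locus the equivariant Euler class of the virtual normal bundle is $e_T(N_{g,s,d})$ with $N_{g,s,d}=H^0\ominus H^1(f^*N_{Q/\mathbb{P}^4})$, and Lemma \ref{TGW} (a consequence of the virtual push-forward property for forgetful maps) kills every term containing $\mathrm{ch}_i(N_{g,s,d})$ with $i>0$. What survives is handled by the string, dilaton and divisor equations alone, and the end result is an explicit polynomial, $4(5d-1)(5d-2)$ for $g=2$ and $24(5d-3)(5d-4)$ for $g=3$, whose nonvanishing for $d>0$ is immediate. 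So the obstacle you anticipate at the end is not the actual one; the actual one is the rubber bookkeeping described above.
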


Since the proofs for genus 2 and 3 are similar, we will give a detailed proof for genus 3 in the subsection below and give a short proof for genus 2 in Appendix A.

\begin{proof}[Proof of Theorem \ref{thm-2}]We need to show that Conjecture \ref{conj} is true for $g=2,\,3$. Since $N_{g,0}$ are known by (\ref{inv-d0}), we can assume that $d>0$.

By Theorem \ref{keythm} \ref{keythm2}, we know that there exists $\zeta_{g,d}\in S_{g,d}$ such that
\begin{equation}\label{Final-g3}
N_{g,d}=\frac{1}{C_{g,d}}\big\{A_{g,d}(\zeta_{g,d})-\sum_{\rho\in S_{g,d}'}A_{g,d}(\rho)C_{\rho}-NPT\big\}.
\end{equation}
Here, we always assume that $g=2,\,3$.

We recall that $NPT$ can be determined by $N_{g',d'}$ with $(g',d')<(g,d)$ and $A_{g,d}$ are treated as known constants. So we can recursively solve equations (\ref{Final-g3}) to get $N_{g,d}$. Once $N_{g,d}$ are known, relative invariants of $(\mathbb{P}^4,Q)$ can be determined by Lemma \ref{rl-ab}.
\end{proof}

\subsection{Part II}\label{partII}
In order to proof Theorem \ref{keythm2} for $g=3$, we choose
\begin{equation}\label{chose-g3}
\zeta_{3,d}=\{(5d-4,0),(1,0),(1,0)\},
\end{equation}
for each $d>0$. Obviously, $\zeta_{3,d}\in S_{3,d}$ but $\zeta_{3,d}\notin S_{3,d}'$.

We need to determine those $C_{\rho}$ in Theorem \ref{keythm2}. Firstly, we assume that $d\geq 2$.

By the analysis in the above subsection, we know that
\begin{equation}\label{deg-g3}
A_{3,d}(\zeta_{3,d})=\sum_{\varrho\in S_{3,d}'}R_{3,d}(\varrho)\mathcal{F}(\varrho|\zeta_{3,d})+B_{3,d}(\zeta_{3,d})+NPT.
\end{equation}
We need to figure out those $\varrho$ such that $\mathcal{F}(\varrho|\zeta_{3,d})$ does not vanish.

We set $\varrho=\{(a_i,b_i)\}_{i=1}^r$. Since $\sum(a_i+b_i)=5d-2>0$, $\varrho$ can not be empty. We also use $|\varrho|$ to denote the number of pairs in $\varrho$.

(I) If $|\varrho|\geq|\zeta_{3,d}|=3$, then we have $\mathcal{F}(\varrho|\zeta_{3,d})=0$ by the fact that $\varrho\in S_{3,d}'$, $\zeta_{3,d}\notin S_{3,d}'$ and Lemma \ref{discon-fbv}.

(II) If $|\varrho|=1$, then since $\varrho\in S_{3,d}'$ we have
\[a_1+b_1=5d-2,~~0\leq b_1\leq 3.\]
We may rewrite it as $\{(5d-2-s_1,s_1)\}$ with $s_1\in\{0,1,2,3\}$.

Let $\rho_{1,s_1}=\{(5d-2-s_1,s_1)\}$. By Lemma \ref{typeI-g0}, we can compute that
\[\mathcal{F}\big(\rho_{1,s_1}|\zeta_{3,d}\big)=5^{s_1}(s_1+1)(5d-1-s_1)^2.\]
The contribution of those $\varrho$ to the R.H.S of (\ref{deg-g3}) is
\[\sum_{s_1=0}^3 R_{3,d}\big(\rho_{1,s_1}\big) 5^{s_1}(s_1+1)(5d-1-s_1)^2.\]

(III) If $|\varrho|=2$, we set
\[(l_1,m_1)=(5d-4,0),~~(l_2,m_2)=(1,0),~~(l_3,m_3)=(1,0).\]
By definition, we have
\begin{equation*}
\begin{aligned}
&\mathcal{F}(\varrho|\zeta_{3,d})=\\
&~~~\sum_{I_1,I_2}\prod_{i=1}^2\Big\langle\Big(a_i+1,\frac{H^{3-b_i}}{5}\Big)\Big|\prod_{t_i\in I_i} \Big\{H^{m_{t_i}}\prod_{k_{t_i}=0}^{l_{t_i}}(k_{t_i}\tau+[D_{\infty}])\Big\}\Big\rangle_{0,(a_i+1)[F]}
\end{aligned}
\end{equation*}
where $I_1\sqcup I_2=\{1,2,3\}$.

(i) If $|I_i|=0$ for some $i$, then by Lemma \ref{discon-fbv} we have
\[\Big\langle\Big(a_i+1,\frac{H^{3-b_i}}{5}\Big)\Big|\Big\rangle_{0,(a_i+1)[F]}=0.\]
So the corresponding contribution to $\mathcal{F}(\varrho|\zeta_{3,d})$ vanishes.

(ii) Now we assume that $|I_1|=1$. There are two cases.

(a) If $I_1=\{1\}$, then the corresponding connected fiber invariant in $\mathcal{F}(\varrho|\zeta_{3,d})$ becomes
\[\Big\langle\Big(a_1+1,\frac{H^{3-b_1}}{5}\Big)\Big|\prod_{k_1=0}^{5d-4}(k_1\tau+[D_{\infty}])\Big\rangle_{0,(a_1+1)[F]}.\]
By Corollary \ref{calfib-1}, it vanishes unless $(a_1,b_1)=(5d-4,0)$. Since $\varrho\in S_{3,d}'$, we may deduce that
\[a_2+b_2=2.\]
So we may write $\varrho$ as $\{(5d-4,0),(2-s_2,s_2)\}$ with $s_2\in\{0,1,2\}$.

(b) If $I_1=\{2\}~\text{or}~\{3\}$, then we can similarly deduce that the corresponding term in $\mathcal{F}(\varrho|\zeta_{3,d})$ vanishes unless
\[(a_1,b_1)=(1,0), ~(a_2,b_2)=(5d-3-s_3,s_3)\]
where $s_3\in\{0,1,2,3\}$. But since we assume that $d\geq 2$, such $\varrho$ does not satisfy $(a_1,b_1)\geq(a_2,b_2)$. So we will not choose it.

(iii) If $|I_2|=1$, we may deduce that only those $\varrho$ such that
\[(a_1,b_1)=(5d-3-s_3,s_3),~ (a_2,b_2)=(1,0)\]
give non-vanishing contributions to $\mathcal{F}(\varrho|\zeta_{3,d})$ and satisfy our requirement
\[(a_1,b_1)\geq (a_2,b_2).\]

We can summarize from the discussion of (i), (ii), (iii) that if $|\varrho|=2$, then $\mathcal{F}(\varrho|\zeta_{3,d})$ vanishes unless
\begin{equation*}
\varrho=\{(5d-4,0),(2-s_2,s_2)\}~~\text{or}~~\{(5d-3-s_3,s_3),(1,0)\}
\end{equation*}
with $s_2\in\{0,1,2\}$ and $s_3\in\{0,1,2,3\}$.

Now we begin to compute $\mathcal{F}(\varrho|\zeta_{3,d})$ for those exceptional $\varrho$.

Let
\[\rho_{2,s_2}=\{(5d-4,0),(2-s_2,s_2)\},~~\rho_{3,s_3}=\{(5d-3-s_3,s_3),(1,0)\}.\]

By Corollary \ref{calfib-1}, \ref{calfib-2}, we have
\[\mathcal{F}(\rho_{i,s_i}|\zeta_{3,d})=
\begin{cases}
  (3-s_2)5^{s_2}, & \mbox{if } i=2, \\
  2(5d-2-s_3)5^{s_3}, & \mbox{if } i=3.
\end{cases}
\]

The total contribution of those $\varrho$ with $|\varrho|=2$ to the R.H.S of (\ref{deg-g3}) is
\[
\sum_{s_2=0}^2 R_{3,d}\big(\rho_{2,s_2}\big)(3-s_2)5^{s_2}+\sum_{s_3=0}^3 R_{3,d}\big(\rho_{3,s_3}\big)2(5d-2-s_3)5^{s_3}.
\]

We may conclude from the discussion of (I), (II) and (III) that
\[
\begin{aligned}
A_{3,d}(\zeta_{3,d})=&\sum_{s_1=0}^3 R_{3,d}\big(\rho_{1,s_1}\big) 5^{s_1}(s_1+1)(5d-1-s_1)^2+\\
&\sum_{s_2=0}^2 R_{3,d}\big(\rho_{2,s_2}\big)(3-s_2)5^{s_2}+\sum_{s_3=0}^3 R_{3,d}\big(\rho_{3,s_3}\big)2(5d-2-s_3)5^{s_3}.
\end{aligned}
\]

By (\ref{substerm}), we know that
\[
A_{3,d}(\rho_{i,s_i})=\sum_{\varrho\in S_{3,d}'}R_{3,d}(\varrho)\mathcal{F}(\varrho|\rho_{i,s_i})+B_{3,d}(\rho_{i,s_i})+NPT
\]
for $i=1,2,3$.

We may write the term
\begin{equation}\label{remov-part}
\sum_{\varrho\in S_{3,d}'}R_{3,d}(\varrho)\mathcal{F}(\varrho|\rho_{i,s_i})
\end{equation}
more explicitly by using the similar analysis as above.

For fixed $\rho_{1,s_1}$, by Lemma \ref{discon-fbv} we know that $\mathcal{F}(\varrho|\rho_{1,s_1})$ vanishes unless $\varrho=\rho_{1,s_1}$. In the latter case, we have
\[\mathcal{F}(\rho_{1,s_1}|\rho_{1,s_1})=|Aut(\rho_{1,s_1})|=1.\]
So we conclude that
\[\sum_{\varrho\in S_{3,d}'}R_{3,d}(\varrho)\mathcal{F}(\varrho|\rho_{1,s_1})=R_{3,d}(\rho_{1,s_1}).\]

For fixed $\rho_{2,s_2}$, we may discuss according to $|\varrho|$.

If $|\varrho|=2$, then we may deduce that $\mathcal{F}(\varrho|\rho_{2,s_2})$ vanishes unless $\varrho=\rho_{2,s_2}$. In the latter case,
\[\mathcal{F}(\rho_{2,s_2}|\rho_{2,s_2})=|Aut(\rho_{2,s_2})|=1.\]

If $|\varrho|=1$, then similar to the discussion in (II), we know that $\varrho=\rho_{1,s_1}$ for some $s_1$. By Corollary \ref{calfib-2}, we have
\[
\mathcal{F}(\rho_{1,s_1}|\rho_{2,s_2})=
\begin{cases}
(5d-1-s_1)5^{s_1-s_2},~ \text{if}~ s_1\geq s_2,\\
0,~ \text{if}~ s_1< s_2.
\end{cases}
\]
So we have
\[\sum_{\varrho\in S_{3,d}'}R_{3,d}(\varrho)\mathcal{F}(\varrho|\rho_{2,s_2})=R_{3,d}(\rho_{2,s_2})+\sum_{s_1= s_2}^3 R_{3,d}(\rho_{1,s_1})(5d-1-s_1)5^{s_1-s_2}.\]

Similarly we can deduce that
\[\sum_{\varrho\in S_{3,d}'}R_{3,d}(\varrho)\mathcal{F}(\varrho|\rho_{3,s_3})=R_{3,d}(\rho_{3,s_3})+\sum_{s_1= s_3}^3 R_{3,d}(\rho_{1,s_1})(5d-1-s_1)5^{s_1-s_3}.\]

It happens that we may write (\ref{remov-part}) in the following way
\[\sum_{\varrho\in S_{3,d}'}R_{3,d}(\varrho)\mathcal{F}(\varrho|\rho_{i,s_i})=\sum_{j=1}^3\sum_{s_j}R_{3,d}(\rho_{j,s_j})\mathcal{F}(\rho_{j,s_j}|\rho_{i,s_i}).\]

We know that those $C_{\rho}$ in Theorem \ref{keythm2} satisfy a system of equations (\ref{sol-rel}), which are reduced to the following equations in our case.

For each $\rho_{j,s_j}$, we have
\[\mathcal{F}(\rho_{j,s_j}|\zeta_{3,d})=\sum_{i=1}^3\sum_{s_i}\mathcal{F}(\rho_{j,s_j}|\rho_{i,s_i})C_{\rho_{i,s_i}}.\]
By solving the system of equations, we get
\[\left\{
\begin{array}{rcl}
C_{\rho_{1,s_1}} & = & (5d-1-s_1)(s_1+1)(\frac{s_1}{2}-5d)5^{s_1},\\
C_{\rho_{2,s_2}} & = & (3-s_2)5^{s_2},\\
C_{\rho_{3,s_3}} & = & 2(5d-2-s_3)5^{s_3}.\\
\end{array}\right.
\]
So we have
\[
\begin{aligned}
&A_{3,d}(\zeta_{3,d})-\sum_{i=1}^3\sum_{s_i}A_{3,d}(\rho_{i,s_i})C_{\rho_{i,s_i}}=\\
&~~~~~~~~~~~~~~~~B_{3,d}(\zeta_{3,d})-\sum_{i=1}^3\sum_{s_i}B_{3,d}(\rho_{i,s_i})C_{\rho_{i,s_i}}+NPT.
\end{aligned}
\]

\begin{remark}
In the case of $d=1$, we can still get the above equality with the exception that $s_3\in\{0,1,2\}$.
\end{remark}

\begin{proof}[Proof of Theorem \ref{keythm2} for $g=3$] We need to show that
\begin{equation}\label{coef-g3}
B_{3,d}(\zeta_{3,d})-\sum_{i=1}^3\sum_{s_i}B_{3,d}(\rho_{i,s_i})C_{\rho_{i,s_i}}
\end{equation}
can be written as $C_{3,d}N_{3,d}$ with $C_{3,d}\neq 0$.

Applying Lemma \ref{typeI-nf1}, \ref{typeI-nf2} and \ref{typeI-nf3} in the below, we can compute that
\[B_{3,d}(\zeta_{3,d})-\sum_{i=1}^3\sum_{s_i}B_{3,d}(\rho_{i,s_i})C_{\rho_{i,s_i}}=24(5d-3)(5d-4)N_{3,d}.\]
Obviously, $24(5d-3)(5d-4)\neq 0$. The proof is complete.
\end{proof}

We are left to compute those $B_{3,d}$ appeared in (\ref{coef-g3}). For further applications, we will compute $B_{g,d}(\zeta)$ instead ($g$ is not fixed and $|\zeta|\leq 3$). We mainly use relative virtual localization formula and the virtual pushforward property defined by Gathmann \cite{Ga1}. Let us recall the definition of virtual pushforward property at first.

\begin{definition}[\cite{Ga1}, Definition 5.2.1]\label{vpfp}
Let $f:M\rightarrow M'$ be a morphism of moduli spaces of stable (absolute, relative or rubber) maps. Let $\gamma\in A^*(M)$ which is made up of evaluation classes and cotangent line classes.
$f$ is said to satisfy the virtual pushforward property if the following two conditions hold:
\begin{itemize}
  \item[(1)] If the dimension $\gamma\cap [M]^{vir}$ is bigger than the virtual dimension of $[M']^{vir}$, then $f_*(\gamma\cap [M]^{vir})=0$
  \item[(2)] If the dimension $\gamma\cap [M]^{vir}$ is equal to the virtual dimension of $[M']^{vir}$, then $f_*(\gamma\cap [M]^{vir})$ is a scalar multiple of $[M']^{vir}$.
\end{itemize}
\end{definition}

\begin{example}[\cite{Ga1}, Lemma 5.2.4]\label{vpfp-ex}
Let $\pi:\overline{\mathcal{M}}_{g,m}(Q,d)\rightarrow \overline{\mathcal{M}}_{g,m-n}(Q,d)$ be the forgetful morphism which forgets the last $n$ marked points. Then $\pi$ satisfies the virtual pushforward property.
\end{example}

By using Example \ref{vpfp-ex}, we can prove a vanishing result for the twisted Gromov-Witten invariants of $Q$.

Let
$$N_{g,m,d}=H^0(\Sigma,f^*N_{Q/\mathbb{P}^4})\ominus H^1(\Sigma,f^*N_{Q/\mathbb{P}^4})$$
be the virtual vector bundle (see \cite{CG} for more details) on $\overline{\mathcal{M}}_{g,m}(Q,d)$. The twisted Gromov-Witten invariants of $Q$ are defined by
\begin{equation}\label{defTGW}
\begin{aligned}
\Big\langle\prod_{i=1}^m\tau_{k_i}(\delta_{l_i})&\prod_{j=1}^s\text{ch}_{d_j}(N_{g,m,d})\Big\rangle^{Q}_{g,d}\\
&:=\int_{[\overline{\mathcal{M}}_{g,m}(Q,d)]^{vir}}\prod_{i=1}^m\psi_i^{k_i}ev_i^*(\delta_{l_i})\prod_{j=1}^s\text{ch}_{d_j}(N_{g,m,d}),
\end{aligned}
\end{equation}
where $\text{ch}_{d_j}(N_{g,m,d})$ is the $d_j$th Chern character of $N_{g,m,d}$.

The twisted Gromov-Witten invariants will naturally appear when we apply relative virtual localization formula. We always assume that $d>0$ in the following several lemmas.
\begin{lemma}\label{TGW}
If $\sum_j d_j>0$, we have
\[\Big\langle\prod_{i=1}^m\tau_{k_i}(\delta_{l_i})\prod_{j=1}^s\text{ch}_{d_j}(N_{g,m,d})\Big\rangle^{Q}_{g,d}=0.\]
\end{lemma}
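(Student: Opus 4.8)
The plan is to reduce the integral over $\overline{\mathcal{M}}_{g,m}(Q,d)$, whose virtual dimension is $m$ by (\ref{virdim-Q}), to a pushforward onto $\overline{\mathcal{M}}_{g,0}(Q,d)$, whose virtual dimension is $0$, along the forgetful morphism $\pi:\overline{\mathcal{M}}_{g,m}(Q,d)\to\overline{\mathcal{M}}_{g,0}(Q,d)$ dropping all $m$ marked points (this is well defined since $d>0$ keeps every stabilized map stable). I may assume the dimension constraint $\sum_i(k_i+\deg_{\mathbb{C}}\delta_{l_i})+\sum_j d_j=m$, because otherwise the integrand of (\ref{defTGW}) has the wrong degree and the invariant vanishes trivially. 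Writing $\alpha=\prod_{i=1}^m\psi_i^{k_i}ev_i^*(\delta_{l_i})$, under this constraint the class $\alpha\cap[\overline{\mathcal{M}}_{g,m}(Q,d)]^{vir}$ has dimension $m-\sum_i(k_i+\deg_{\mathbb{C}}\delta_{l_i})=\sum_j d_j$.

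The geometric heart of the argument is that the twisting classes descend along $\pi$, namely
\[\text{ch}_{d_j}(N_{g,m,d})=\pi^*\,\text{ch}_{d_j}(N_{g,0,d}),\qquad j=1,\dots,s.\]
To see this I would use that $N_{g,m,d}=R^{\bullet}\rho_*F^*N_{Q/\mathbb{P}^4}$ for the universal curve $\rho$ and universal map $F$, and that forgetting the marked points induces a contraction $c$ of the universal curves collapsing only rational tails carrying marked points, on which $F^*N_{Q/\mathbb{P}^4}$ is trivial. Since $Rc_*\mathcal{O}=\mathcal{O}$ for such a contraction of trees of rational curves, the projection formula gives $Rc_*c^*\big(F^*N_{Q/\mathbb{P}^4}\big)=F^*N_{Q/\mathbb{P}^4}$, so $N_{g,m,d}$ coincides with $\pi^*N_{g,0,d}$ as a class in $K$-theory and the Chern characters agree.

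Granting this, the projection formula yields
\[\Big\langle\prod_{i=1}^m\tau_{k_i}(\delta_{l_i})\prod_{j=1}^s\text{ch}_{d_j}(N_{g,m,d})\Big\rangle^{Q}_{g,d}=\int_{\overline{\mathcal{M}}_{g,0}(Q,d)}\Big(\prod_{j=1}^s\text{ch}_{d_j}(N_{g,0,d})\Big)\cdot\pi_*\big(\alpha\cap[\overline{\mathcal{M}}_{g,m}(Q,d)]^{vir}\big).\]
Now $\alpha$ is built solely from evaluation classes and cotangent line classes, so Example \ref{vpfp-ex} (an instance of Definition \ref{vpfp}) applies to $\pi$. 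Since $\alpha\cap[\overline{\mathcal{M}}_{g,m}(Q,d)]^{vir}$ has dimension $\sum_j d_j$, which is strictly positive by hypothesis and hence strictly larger than the virtual dimension $0$ of $\overline{\mathcal{M}}_{g,0}(Q,d)$, condition (1) of the virtual pushforward property forces $\pi_*\big(\alpha\cap[\overline{\mathcal{M}}_{g,m}(Q,d)]^{vir}\big)=0$, and the whole invariant vanishes.

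The step I expect to require the most care is the pullback identity $\text{ch}_{d_j}(N_{g,m,d})=\pi^*\text{ch}_{d_j}(N_{g,0,d})$: one must verify that the obstruction-theoretic bundle genuinely descends under stabilization, i.e. that the change of domain curve contributes trivially to the $K$-theory class. Everything else—the dimension bookkeeping and the projection formula—is formal once Example \ref{vpfp-ex} is invoked.
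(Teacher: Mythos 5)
Your proposal is correct and follows essentially the same route as the paper: both reduce to the forgetful morphism $\pi$ onto $\overline{\mathcal{M}}_{g,0}(Q,d)$, use the identity $N_{g,m,d}=\pi^*N_{g,0,d}$ (which the paper simply cites from \cite{CG}, Formula iv, whereas you supply the underlying contraction-of-rational-tails argument), apply the projection formula, and conclude by the virtual pushforward property of Example \ref{vpfp-ex} since the class being pushed forward has dimension $\sum_j d_j>0$ exceeding the virtual dimension $0$ of the target.
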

\begin{proof}
Let $\pi:\overline{\mathcal{M}}_{g,m}(Q,d)\rightarrow \overline{\mathcal{M}}_{g,0}(Q,d)$ be the forgetful morphism, we have
\[\pi^*(N_{g,0,d})=N_{g,m,d}\]
by \cite{CG}, Formula iv. So $\pi^*\big(\text{ch}_{d_j}(N_{g,0,d})\big)=\text{ch}_{d_j}(N_{g,m,d})$. The pushforward
\[\pi_*\Big(\Big\{\prod_{i=1}^m\psi_i^{k_i}ev_i^*(\delta_{l_i})\prod_{j=1}^s\text{ch}_{d_j}(N_{g,m,d})\Big\}\cap[\overline{\mathcal{M}}_{g,m}(Q,d)]^{vir}\Big)\]
equals to
\[\prod_{j=1}^s\text{ch}_{d_j}(N_{g,0,d})\cap\pi_*\Big(\prod_{i=1}^m\psi_i^{k_i}ev_i^*(\delta_{l_i})\cap[\overline{\mathcal{M}}_{g,m}(Q,d)]^{vir}\Big)\]
by projection formula. The dimension constraint requires that
\[deg\Big(\prod_{j=1}^s\text{ch}_{d_j}(N_{g,m,d})\Big)+deg\Big(\prod_{i=1}^m\psi_i^{k_i}ev_i^*(\delta_{l_i})\Big)=m.\]
Since
\[deg\Big(\prod_{j=1}^s\text{ch}_{d_j}(N_{g,m,d})\Big)=\sum_jd_j>0,\]
we have
\[deg\Big(\prod_{i=1}^m\psi_i^{k_i}ev_i^*(\delta_{l_i})\Big)<m=dim [\overline{\mathcal{M}}_{g,m}(Q,d)]^{vir}.\]
So
\[dim\Big(\prod_{i=1}^m\psi_i^{k_i}ev_i^*(\delta_{l_i})\cap[\overline{\mathcal{M}}_{g,m}(Q,d)]^{vir}\Big)> dim [\overline{\mathcal{M}}_{g,0}(Q,d)]^{vir}=0.\]
Now the virtual pushforward property of Example \ref{vpfp-ex} implies that
\[\pi_*\Big(\prod_{i=1}^m\psi_i^{k_i}ev_i^*(\delta_{l_i})\cap[\overline{\mathcal{M}}_{g,m}(Q,d)]^{vir}\Big)=0.\]
The lemma follows.
\end{proof}

The next two lemmas are firstly proven by Gathmann in \cite{Ga1}, which also play an important role in the computation of $B_{g,d}$ in the below.

\begin{lemma}[\cite{Ga1}, Corollary 5.2.5]\label{vpfp-1}
Let $M$ be a moduli space of stable maps to $Q$, possibly with disconnected domains. Let $ft:M\rightarrow M'$ be a forgetful morphism which forgets a given subset of the marked points and/or connected components. ($M'$ is also a moduli space of stable maps to $Q$, with in general fewer marked points and connected components.) Then $ft$ satisfies the virtual pushforward property.
\end{lemma}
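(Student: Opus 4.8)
The plan is to reduce the general forgetful morphism to the two elementary operations it is built from: forgetting marked points on a single connected component, which is exactly the content of Example \ref{vpfp-ex}, and forgetting an entire connected component, which after a product decomposition becomes an honest projection. I would write $M=\prod_i M_i$ as the product of its connected factors, so that $[M]^{vir}=\prod_i[M_i]^{vir}$ and every class $\gamma$ built from evaluation and cotangent line classes splits as an external product $\gamma=\prod_i\gamma_i$ with $\gamma_i$ supported on $M_i$; it suffices to treat such monomials, since the two conditions of Definition \ref{vpfp} are checked in each fixed dimension. Separating the factors that are kept from those that are forgotten, I would factor $ft$ as
\[
M_{\mathrm{keep}}\times M_{\mathrm{forget}}\xrightarrow{\ ft_1\times\mathrm{id}\ }M_{\mathrm{keep}}'\times M_{\mathrm{forget}}\xrightarrow{\ pr\ }M_{\mathrm{keep}}'=M',
\]
where $ft_1$ forgets the prescribed marked points on the kept components and $pr$ drops the forgotten ones.

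First I would handle $ft_1$. On $M_{\mathrm{keep}}=\prod_{i\notin B}M_i$ it is the product $\prod_{i\notin B}ft_{1,i}$ of connected forgetful morphisms, each satisfying the virtual pushforward property by Example \ref{vpfp-ex}. A product of such morphisms inherits the property: writing $D_i=\dim(\gamma_i\cap[M_i]^{vir})$ and $V_i$ for the virtual dimension of the target factor, the K\"unneth formula gives $(ft_1)_*=\prod_i(ft_{1,i})_*$. If $\sum D_i>\sum V_i$ then some $D_j>V_j$, forcing that factor's pushforward, and hence the product, to vanish; if $\sum D_i=\sum V_i$ then either some $D_j>V_j$ (product vanishes) or all $D_i=V_i$, in which case each factor is a multiple of $[M_i']^{vir}$ and the product is a multiple of $[M_{\mathrm{keep}}']^{vir}$.

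Next I would treat $pr:M_{\mathrm{keep}}'\times M_{\mathrm{forget}}\to M_{\mathrm{keep}}'$. Since $pr$ factors through the structure map $M_{\mathrm{forget}}\to\mathrm{pt}$, the projection formula gives $pr_*(\alpha\times\beta)=\deg(\beta)\cdot\alpha$, which is nonzero only when $\beta$ is zero-dimensional. Combining with the previous step yields
\[
ft_*\big(\gamma\cap[M]^{vir}\big)=\deg\!\big(\gamma_{\mathrm{forget}}\cap[M_{\mathrm{forget}}]^{vir}\big)\cdot(ft_1)_*\big(\gamma_{\mathrm{keep}}\cap[M_{\mathrm{keep}}]^{vir}\big).
\]
Setting $a=\dim(\gamma_{\mathrm{keep}}\cap[M_{\mathrm{keep}}]^{vir})$, $b=\dim(\gamma_{\mathrm{forget}}\cap[M_{\mathrm{forget}}]^{vir})$ and writing $V$ for the virtual dimension of $M'$, we have $\dim(\gamma\cap[M]^{vir})=a+b$. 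If $a+b>V$, then either $b>0$, so the degree factor vanishes, or $b=0$ and $a>V$, so $(ft_1)_*$ vanishes by the property of $ft_1$; in both cases $ft_*=0$. If $a+b=V$, then either $b>0$ (pushforward $0$) or $b=0$ and $a=V$, in which case $(ft_1)_*(\gamma_{\mathrm{keep}}\cap[M_{\mathrm{keep}}]^{vir})$ is a multiple of $[M']^{vir}$ and the scalar $\deg(\gamma_{\mathrm{forget}}\cap[M_{\mathrm{forget}}]^{vir})$ merely rescales it.

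This verifies both conditions of the virtual pushforward property for $ft$ directly, so I never need a general ``composition preserves the property'' statement, which in fact fails without the critical-dimension hypothesis. The only inputs beyond Example \ref{vpfp-ex} are the external-product structure of the virtual class and of the evaluation and $\psi$ classes on a disconnected moduli space, together with the projection and K\"unneth formulas. I expect the main point to get right is precisely this compatibility, namely that $ft$ genuinely decomposes as $pr\circ(ft_1\times\mathrm{id})$ and that $[M]^{vir}$ is the external product of the factor classes; once that is in place, the remaining argument is bookkeeping on dimensions.
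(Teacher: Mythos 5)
Your argument is correct. Note that the paper itself offers no proof of this lemma: it is imported verbatim from Gathmann's thesis (\cite{Ga1}, Corollary 5.2.5), so there is no in-paper argument to compare against, and your write-up supplies the missing derivation. The route you take --- decompose $M$ into its connected factors so that $[M]^{vir}$ and any monomial $\gamma$ in evaluation and cotangent classes split as exterior products, apply Example \ref{vpfp-ex} factorwise to the components that survive, and integrate out the discarded factors via the projection formula --- is the natural deduction of the disconnected statement from the connected one, and your dimension bookkeeping is sound: the observation that equality of the total dimensions forces either all factorwise equalities or some strict excess (hence vanishing) is exactly what makes the product argument work, and you are right that this sidesteps any appeal to closure of the virtual pushforward property under composition, which would not be justified in general. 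Two points are left implicit but are harmless: the identification $[M]^{vir}=\prod_i[M_i]^{vir}$ for a labelled disconnected moduli space (the same convention the paper uses when it says disconnected invariants factor into connected ones), and the standing assumption that $ft$ is actually well defined, i.e.\ no retained component is rendered unstable with nothing to stabilize to after its marked points are forgotten; the lemma statement tacitly assumes the latter as well. It would also be worth one sentence noting that a class of negative dimension on $M_{\mathrm{forget}}$ vanishes outright, so the case split ``$b>0$ or $b=0$'' is exhaustive.
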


\begin{lemma}[\cite{Ga1}, Theorem 5.2.7 or \cite{QF}, Theorem 5.1]\label{vpfp-2}
Let $\mathcal{M}$ be a moduli space of stable relative maps $\overline{\mathcal{M}}_{G^r}^{\bullet,\sim}(Y,D_0)$, or stable relative maps to a non-rigid target $\overline{\mathcal{M}}_{G^r}^{\bullet,\sim}(Y,D_0\cup D_{\infty})$. Let $p:\mathcal{M}\rightarrow M$ be the morphism that projects the curves in $Y$ down to the base $Q$, forget a given subset of (absolute and/or relative) marked points and/or connected components, stabilizes the result. (Thus $M$ is a moduli space of stable maps to $Y$, possibly with disconnected domains, whose combinatorial data is determined by $p$ and $\mathcal{M}$.) We assume that $p$ is well defined, i.e. that every rational (resp. elliptic) connected component that is not forgotten by $p$ and whose curve class is a fiber class of $Y$ has at least 3 (resp. 1) marked points that are not forgotten by $p$. Then $p$ satisfies the virtual pushforward property.
\end{lemma}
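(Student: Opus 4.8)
The plan is to reduce the statement to the virtual pushforward property of a single \emph{vertical projection} morphism, and then to a dimension count governed by the $\mathbb{P}^1$-bundle structure of $Y\to Q$. First I would factor $p$ as a composition $p=\mathrm{st}\circ\mathrm{ft}\circ q$, where $q$ projects the relative/rubber maps of $(Y,D_0)$ (or of $(Y,D_0\cup D_{\infty})$) down to maps to $Q$ via the fiberwise contraction $Y\to Q$, the map $\mathrm{ft}$ forgets the prescribed marked points and connected components, and $\mathrm{st}$ stabilizes the result. The forgetful and stabilization steps are already covered by Lemma \ref{vpfp-1} (and Example \ref{vpfp-ex}), so it suffices to treat $q$. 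Moreover, the virtual pushforward property of Definition \ref{vpfp} behaves well under composition, so that establishing it for $q$ together with Lemma \ref{vpfp-1} yields it for $p$; this is a routine check using that the relevant insertions $\gamma$, being built from evaluation and cotangent classes, pull back compatibly along each arrow.

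For the vertical projection $q\colon\mathcal{M}\to M$, the key input is that the target is a $\mathbb{P}^1$-bundle, so the relative deformation theory of $q$ is controlled by the vertical tangent complex of $Y/Q$, i.e.\ by $H^*\big(\Sigma,f^*T_{Y/Q}\big)$, which is assembled from $N_{Q/\mathbb{P}^4}\oplus\mathcal{O}_Q$. I would equip $q$ with the resulting relative perfect obstruction theory and invoke functoriality of virtual classes (virtual pushforward) to write $q_*[\mathcal{M}]^{vir}$ as a class on $M$ organized by the relative virtual dimension. Capping with $\gamma$ and applying the projection formula then reduces the computation to integration along the fibers of $q$, each fiber being a moduli of maps into the fiber $\mathbb{P}^1$, with the rubber $\mathbb{C}^*$-scaling quotiented out in the non-rigid case.

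The dimension bookkeeping then produces the two cases of Definition \ref{vpfp} directly. If $\dim\big(\gamma\cap[\mathcal{M}]^{vir}\big)$ strictly exceeds $\dim[M]^{vir}$, the fiber integration lands in a cohomological degree exceeding the fiber dimension, forcing $q_*\big(\gamma\cap[\mathcal{M}]^{vir}\big)=0$; if the two dimensions are equal, the pushforward is a top-degree fiber class, hence a scalar multiple of $[M]^{vir}$. The hypothesis that every non-forgotten rational (resp.\ elliptic) component carrying a fiber class retains at least $3$ (resp.\ $1$) marked points is precisely what guarantees that $\mathrm{st}\circ\mathrm{ft}$ is a genuine morphism and that the relative virtual dimension of $q$ is computed by the expected vertical contribution, with no unstable fiber factor escaping the count.

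The main obstacle I expect is the target expansions intrinsic to relative and rubber maps: over the boundary of $\mathcal{M}$ the target degenerates to an accordion $Y_s$ (or $Y_{s,l}$), and one must verify that each bubble component maps with a curve class that is vertical for the projection to $Q$, so that it contributes nothing on $M$ and does not disturb the relative dimension estimate. Controlling how the cotangent classes $\psi_i$, together with the rubber class $\psi_{\infty}$ in the non-rigid case, interact with the fiberwise integration -- that is, showing these classes are pulled back from, or compatible with, $M$ up to terms supported on lower strata -- is the technical heart of the argument, and is where the well-definedness assumption on $q$ must be used repeatedly in an induction over the number of marked points and connected components.
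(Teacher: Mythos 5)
First, a point of comparison: the paper does not prove Lemma \ref{vpfp-2} at all --- it is imported from \cite{Ga1}, Theorem 5.2.7 (reproved by a different method in \cite{QF}, Theorem 5.1) --- so your proposal must be measured against those proofs rather than against anything in this paper. Your overall shape (factor $p$ through the projection to the base $Q$ and reduce to a single ``vertical'' morphism, with the forgetful and stabilization steps handled by Lemma \ref{vpfp-1}) is indeed the strategy of both references. But the decisive step is missing. In your third paragraph you assert that when $\dim\bigl(\gamma\cap[\mathcal{M}]^{vir}\bigr)$ exceeds the virtual dimension of $M$ the pushforward vanishes because ``the fiber integration lands in a cohomological degree exceeding the fiber dimension.'' That argument is not valid: outside the fiber-class situation of Lemma \ref{typeI-g0}, the projection $q$ is not a fibration, its fibers jump in dimension, and $[\mathcal{M}]^{vir}$ does not decompose as a fiberwise product. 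The actual dimensions of $\mathcal{M}$ and of the fibers of $q$ can far exceed the virtual dimensions, so a cycle whose dimension is larger than the virtual dimension of $M$ but smaller than $\dim M$ has no degree reason to push forward to zero. The vanishing in case (1) of Definition \ref{vpfp} and the proportionality to $[M]^{vir}$ in case (2) are precisely the nontrivial content of the theorem; establishing them requires exhibiting a relative perfect obstruction theory for $q$ compatible with the absolute ones (so that $[\mathcal{M}]^{vir}$ is a virtual pullback of $[M]^{vir}$) and then expressing $q_*\bigl(\gamma\cap[\mathcal{M}]^{vir}\bigr)$ as a polynomial in Chern classes of sheaves on $M$ capped with $[M]^{vir}$, after which dimension reasons apply --- this is the Manolache--Qu virtual pushforward machinery, or Gathmann's explicit cone analysis. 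You name the compatible obstruction theory but then substitute an invalid fiberwise dimension count for the argument that actually uses it.

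Two smaller gaps. The claim that the virtual pushforward property ``behaves well under composition'' is not a routine check: if $f:A\to B$ and $g:B\to C$ both satisfy it but $\dim\bigl(\gamma\cap[A]^{vir}\bigr)$ lies strictly between the virtual dimensions of $C$ and of $B$, then the property for $f$ gives no information about $f_*\bigl(\gamma\cap[A]^{vir}\bigr)$ and the composition argument stalls; one must verify that the intermediate virtual dimensions are ordered so that this window is empty, or argue directly. Also, your intermediate morphism $q$ (project to $Q$ while forgetting nothing) need not land in a moduli space of stable maps, since collapsing a fiber component produces a constant component that may be unstable; this is exactly why the well-definedness hypothesis in Lemma \ref{vpfp-2} is phrased for the composite $p$, and why the cited proofs treat projection, forgetting and stabilization together rather than as three independent arrows each separately satisfying the property.
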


\begin{remark}
Lemma \ref{vpfp-2} is reproved by F.Qu in \cite{QF} using a different method.
\end{remark}

Now let us begin to compute $B_{g,d}$ by applying relative virtual localization formula given in Appendix B and Lemmas \ref{vpfp-1} and \ref{vpfp-2}.

Firstly, we compute some $B_{g,d}(\zeta)$ with $|\zeta|=1$.
\begin{lemma}[\cite{Ga1}]\label{typeI-nf1}
Let $\zeta_{g,d,m}^{1}=\{(5d+1-g-m,m)\}\in S_{g,d}$. Then we have
\[B_{g,d}(\zeta_{g,d,m}^{1})=
\begin{cases}
X_{g,d}N_{g,d},& \text{if}~m=0,\\
dN_{g,d},& \text{if}~m=1,\\
0, & \text{if}~m=2,3\\
\end{cases}
\]
where
\begin{equation}\label{X-gd}
X_{g,d}=(5d+2-g)\big((5d+1-g)(g-1)+5d\big).
\end{equation}
\end{lemma}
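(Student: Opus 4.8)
The plan is to evaluate the type I invariant $B_{g,d}(\zeta^1_{g,d,m})$ directly by the relative virtual localization formula (\ref{lc-0}) for the fiberwise $\mathbb{C}^*$-action (\ref{caction}) on $Y=\mathbb{P}(N_{Q/\mathbb{P}^4}\oplus\mathcal{O}_Q)$, whose fixed sections are $D_0$ and $D_\infty$. Since the curve class is $d\alpha_\infty$, which lies in the $D_\infty$-section and has zero intersection with $D_0$, no genuine expansion of the target over $D_0$ is forced and the localization graphs are very restricted. First I would show that the only fixed locus contributing a principal term is the one on which the whole domain maps into $D_\infty\cong Q$, so that this locus is identified with $\overline{\mathcal{M}}_{g,1}(Q,d)$; every other fixed locus carries a nontrivial rubber or fiber-bubble factor over $D_0$.

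Next I would dispose of all rubber and bubble contributions. Each such contribution is a push-forward from a moduli space of relative or rubber maps (or of maps with disconnected fiber components) down to $Q$ of a class built from evaluation and cotangent line classes; by the virtual push-forward property of Lemmas \ref{vpfp-1} and \ref{vpfp-2} the push-forward vanishes whenever the pushed class exceeds the virtual dimension of the target, which a dimension count shows to be the case here. This step is the main obstacle: one must organize the localization graph sum so that each non-principal vertex genuinely falls under the hypotheses of Lemmas \ref{vpfp-1} and \ref{vpfp-2} (in particular the stability condition on the rational and elliptic fiber components), and this is precisely the technical input that Gathmann supplies in \cite{Ga1}.

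On the surviving locus the integrand simplifies in a controlled way. The cotangent class $\tau$ restricts to the absolute $\psi_1$ on $\overline{\mathcal{M}}_{g,1}(Q,d)$, the insertion $[D_\infty]$ restricts to the equivariant first Chern class $5H+t$ of $N_{D_\infty/Y}=N_{Q/\mathbb{P}^4}=\mathcal{O}_Q(5H)$, and the virtual normal bundle of the locus is $N_{g,1,d}$ carrying $\mathbb{C}^*$-weight $t$, of rank $r=5d+1-g$ by Riemann--Roch. Hence $1/e^{\mathbb{C}^*}(N^{\mathrm{vir}})=t^{-r}\bigl(1+\text{higher Chern characters of }N_{g,1,d}\bigr)$, and Lemma \ref{TGW} annihilates every term carrying a positive-degree Chern character. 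The localization contribution therefore collapses to $t^{-r}\bigl\langle H^{m}\prod_{k=0}^{5d+1-g-m}(k\psi_1+5H+t)\bigr\rangle^{Q}_{g,d}$ on $\overline{\mathcal{M}}_{g,1}(Q,d)$, whose virtual dimension is $1$.

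It then remains to extract the $t$-independent part. Writing each factor as $t+b_k$ with $b_0=5H$ and $b_k=k\psi_1+5H$, the degree count on a virtual dimension $1$ space forces exactly one monomial to survive for each value of $m$: for $m=2,3$ there are too few factors to reach cohomological degree $1$ after including $H^{m}$, so the integral vanishes; for $m=1$ only the leading term $e_0(b)=1$ survives and the divisor equation gives $\langle\tau_0(H)\rangle^{Q}_{g,d}=dN_{g,d}$; and for $m=0$ the surviving term is the elementary symmetric polynomial $e_1(b)=5H\,(l+1)+\psi_1\,\tfrac{l(l+1)}{2}$ with $l=5d+1-g$, which after the divisor and dilaton equations yields $(5d+2-g)\bigl((5d+1-g)(g-1)+5d\bigr)N_{g,d}=X_{g,d}N_{g,d}$. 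I expect the conventions for the equivariant weight in $[D_\infty]\mapsto 5H+t$ to require care, but I note that $t$ drops out of the final answer, consistent with $B_{g,d}$ being non-equivariant.
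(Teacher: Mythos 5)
Your computation on the principal fixed locus is correct and coincides with the paper's case (I): after Lemma \ref{TGW} kills every term carrying a positive Chern character of $N_{g,1,d}$, the $t^0$-part of $H^m\prod_k(k\psi_1+ev_1^*(c_1(N_{Q/\mathbb{P}^4}))+t)\cdot e_T(N_{g,1,d})^{-1}$ is exactly the elementary symmetric expression you write, and string/dilaton/divisor equations give $X_{g,d}$, $d$, $0$ for $m=0,1$ and $m=2,3$ respectively. (For what it is worth, the paper only localizes when $m=0$; for $m=2,3$ it pushes forward to $\overline{\mathcal{M}}_{g,1}(Q,d)$ keeping the marked point, where $ev_1^*(H^m)$ already overshoots virtual dimension $1$ and Lemma \ref{vpfp-2} gives vanishing outright, and for $m=1$ it invokes Gathmann's Proposition 5.3.2. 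Your uniform localization route is viable for these cases too.)

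The genuine gap is in how you dispose of the composite fixed loci. After Lemma \ref{vpfp-1} and Lemma \ref{vpfp-2} eliminate the graphs whose image in $\overline{\mathcal{M}}_{g,0}(Q,d)$ has positive virtual codimension, there remain the graphs with a single vertex $v_0$ over $D_0$ carrying the full genus $g$ and degree $d$ in the rubber, attached by fiber edges to contracted genus-$0$ vertices over $D_\infty$. These map \emph{onto} $\overline{\mathcal{M}}_{g,0}(Q,d)$, and the class to be pushed forward has dimension exactly $0$, equal to the virtual dimension of the target; so the virtual push-forward property only yields \emph{some} scalar multiple of $[\overline{\mathcal{M}}_{g,0}(Q,d)]^{vir}$ and cannot show that the scalar vanishes. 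No dimension count closes this step. The paper instead computes the integrand on these loci explicitly: if $x_1$ sits on the rubber side then $ev_1^*([D_\infty])=0$ and the $k=0$ factor kills $\omega$; if $x_1$ sits on a contracted component attached by an edge of degree $d_1$, then $\psi_1$ restricts to $-\bigl(t+ev_1^*(c_1(N_{Q/\mathbb{P}^4}))\bigr)/d_1$, so the product acquires the scalar factor $\prod_{k=0}^{5d+1-g-m}(1-k/d_1)$, which vanishes when $d_1\le 5d+1-g-m$, while for larger $d_1$ the full residue, including the edge and node factors of $e_T(N_{G^l_0}^{vir})^{-1}$, carries only negative powers of $t$ and hence has zero $t^0$-part. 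You flag this step as the technical input supplied by Gathmann, but you misidentify it as a matter of verifying the stability hypotheses of the push-forward lemmas; the actual mechanism is this restriction computation, and without it your argument does not close.
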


\begin{remark}
If $m=2,3$, then $B_{g,d}(\zeta_{g,d,m}^{1})$ can be computed by applying the virtual pushforward property of Lemma \ref{vpfp-2}. If $m=1$, then $B_{g,d}(\zeta_{g,d,m}^{1})$ can be computed by using Proposition 5.3.2 in \cite{Ga1}. If $m=0$, then $B_{g,d}(\zeta_{g,d,m}^{1})$ can be derived from Proposition 5.4.1 in \cite{Ga1} which need some technical assumption (\cite{Ga1}, Conjecture 5.2.9). We will compute $B_{g,d}(\zeta_{g,d,m}^{1})$ in a new way when $m=0$, which does not need further assumption. For completeness, we will also give a computation of $B_{g,d}(\zeta_{g,d,m}^{1})$ when $m=1,2,3$, by using Lemma \ref{vpfp-2} and Proposition 5.3.2 in \cite{Ga1}.
\end{remark}

\begin{proof}
We recall that
\[B_{g,d}(\zeta_{g,d,m}^{1})=\int_{[\overline{\mathcal{M}}_{\Gamma}(Y,D_0)]^{vir}}ev_1^*(H^m)\prod_{k=0}^{5d+1-g-m}\Big(k\psi_1+ev_1^*([D_{\infty}])\Big),\]
where the data of stable relative maps
\[\Gamma=(g,\beta,\{\nu_i\}_{i=1}^n,m)=(g,d\alpha_{\infty},\emptyset,1).\]

We will divide it into the following three cases.

(A) Case $m=2,3$.
Let
\[f:\overline{\mathcal{M}}_{\Gamma}(Y,D_0)\rightarrow \overline{\mathcal{M}}_{g,1}(Q,d)\]
be the morphism that projects curves in $Y$ down to the base which does not forget the only marked point. Since the class $H$ of $Y$ is pulled back from the base $Q$, we have
\[ev_1^*(H)=(\underline{ev}_1\circ f)^*(H)\]
where we add an underline to the evaluation map of $\overline{\mathcal{M}}_{g,1}(Q,d)$ so as to distinguish it from that of $\overline{\mathcal{M}}_{\Gamma}(Y,D_0)$. Now by projection formula, we have
\begin{eqnarray*}
&  & f_*\Big(ev_1^*(H^m)\prod_{k=0}^{5d+1-g-m}\big(k\psi_1+ev_1^*([D_{\infty}])\big)\cap [\overline{\mathcal{M}}_{\Gamma}(Y,D_0)]^{vir}\Big)\\
&= & \underline{ev}_1^*(H^m)f_*\Big(\prod_{k=0}^{5d+1-g-m}\big(k\psi_1+ev_1^*([D_{\infty}])\big)\cap [\overline{\mathcal{M}}_{\Gamma}(Y,D_0)]^{vir}\Big).
\end{eqnarray*}
The fact that $m=2,3$ implies that the dimension of
\[\prod_{k=0}^{5d+1-g-m}\big(k\psi_1+ev_1^*([D_{\infty}])\big)\cap [\overline{\mathcal{M}}_{\Gamma}(Y,D_0)]^{vir}\]
is bigger than the virtual dimension of $\overline{\mathcal{M}}_{g,1}(Q,d)$ which equals to $1$.

So by the virtual pushforward property of Lemma \ref{vpfp-2}, we have
\[f_*\Big(ev_1^*(H^m)\prod_{k=0}^{5d+1-g-m}\big(k\psi_1+ev_1^*([D_{\infty}])\big)\cap [\overline{\mathcal{M}}_{\Gamma}(Y,D_0)]^{vir}\Big)=0\]
which further implies that $B_{g,d}(\zeta_{g,d,m}^{1})=0$.

(B) Case $m=1$. Still we have
\begin{eqnarray*}
&  & f_*\Big(ev_1^*(H)\prod_{k=0}^{5d-g}\big(k\psi_1+ev_1^*([D_{\infty}])\big)\cap [\overline{\mathcal{M}}_{\Gamma}(Y,D_0)]^{vir}\Big)\\
&= & \underline{ev}_1^*(H)f_*\Big(\prod_{k=0}^{5d-g}\big(k\psi_1+ev_1^*([D_{\infty}])\big)\cap [\overline{\mathcal{M}}_{\Gamma}(Y,D_0)]^{vir}\Big).
\end{eqnarray*}
Now by Proposition 5.3.2 in \cite{Ga1}, we have
\[f_*\Big(\prod_{k=0}^{5d-g}\big(k\psi_1+ev_1^*([D_{\infty}])\big)\cap [\overline{\mathcal{M}}_{\Gamma}(Y,D_0)]^{vir}\Big)=[\overline{\mathcal{M}}_{g,1}(Q,d)]^{vir}.\]
Then by divisor equation, we have
\[B_{g,d}(\zeta_{g,d,1}^{1})=dN_{g,d}.\]

(C) Case $m=0$. The computation, in this case, is different from the above two cases.

Instead of using the morphism $f$, we define a new morphism
\[\pi:\overline{\mathcal{M}}_{\Gamma}(Y,D_0)\rightarrow \overline{\mathcal{M}}_{g,0}(Q,d)\]
which projects the curves in $Y$ down to the base and forget the only marked point. By the virtual property of Lemma \ref{vpfp-2}, we have
\[\pi_*\Big(\omega\cap [\overline{\mathcal{M}}_{\Gamma}(Y,D_0)]^{vir}\Big)=c[\overline{\mathcal{M}}_{g,0}(Q,d)]^{vir}\]
where $c$ is the scalar and
\[\omega=\prod_{k=0}^{5d+1-g}\big(k\psi_1+ev_1^*([D_{\infty}])\big).\]
Now it is easy to see that
\[B_{g,d}(\zeta_{g,d,0}^{1})=cN_{g,d}.\]

We then use the relative virtual localization formula (\ref{lc-0}) (set $(V,W)=(\mathbb{P}^4,Q)$ in the formula) to compute $c$. Firstly, we need to give an equivariant lift of $\omega$.

The projection map $p:Y\rightarrow Q$ is $\mathbb{C}^*$-invariant. So cohomology classes pulled back from $Q$ have natural equivariant lifts. Since $D_{\infty}$ is fixed under the $\mathbb{C}^*$-action, $[D_{\infty}]$ can also be seen as an equivariant class.

The natural equivariant lift of cotangent line bundle $L_1$ will still be denoted as $L_1$. We also use $\psi_1$ to denote the equivariant first Chern class of $L_1$.

Now it is easy to see that $\omega$ has a natural equivariant lift $\omega_T$.

By the relative virtual localization formula (\ref{lc-0}), we know that
\[\pi_*\Big(\omega\cap[\overline{\mathcal{M}}_{\Gamma}(Y,D_0)]^{vir}\Big)\]
equals to the non-equivariant limit of
\begin{equation}\label{lc-1pt}
\sum_{G^l_0}\frac{1}{|Aut(G^l_0)|}(\pi\circ\tau_{G^l_0})_*\Biggr(\frac{\tau_{G^l_0}^*(\omega_T)\cap[\overline{\mathcal{M}}_{G^l_0}]^{vir}}{e_T(N_{G^l_0}^{vir})}\Biggr).
\end{equation}

Firstly, we suppose that $\pi\circ\tau_{G^l_0}$ maps $\overline{\mathcal{M}}_{G^l_0}$ to class of $\overline{\mathcal{M}}_{g,0}(Q,d)$ whose virtual codimension is bigger than $0$.

Let
\[\frac{\tau_{G^l_0}^*(\omega_T)}{e_T(N_{G^l_0}^{vir})}=\sum_{i\in \mathbb{Z}}\omega_it^i\]
be the expansion according to the equivariant parameter $t$. Then the contribution of such localization graph $G^l_0$ is
\[\frac{1}{|Aut(G^l_0)|}(\pi\circ\tau_{G^l_0})_*\Big(\omega_0\cap[\overline{\mathcal{M}}_{G^l_0}]^{vir}\Big).\]
Since
\begin{eqnarray*}
  \text{dim}\bigr(\omega_0\cap[\overline{\mathcal{M}}_{G^l_0}]^{vir}\bigr) &=& \text{dim} \bigr(\prod_{k=0}^{5d+1-g}\big(k\psi_1+ev_1^*([D_{\infty}])\big)\cap[\overline{\mathcal{M}}_{\Gamma}(Y,D_0)]^{vir}\bigr)\\
   &=& 0\\
   &=& \text{dim}\big([\overline{\mathcal{M}}_{g,0}(Q,d)]^{vir}\big)>\text{dim}[(\pi\circ\tau_{G^l_0})(\overline{\mathcal{M}}_{G^l_0})]^{vir},
\end{eqnarray*}
those virtual pushforward properties of Lemmas \ref{vpfp-1} and \ref{vpfp-2} can be used to show that
\[(\pi\circ\tau_{G^l_0})_*\Big(\omega_0\cap[\overline{\mathcal{M}}_{G^l_0}]^{vir}\Big)=0.\]

So we only need to consider those localization graphs $G^l_0$ which satisfy the condition that: $\pi\circ\tau_{G^l_0}$ maps $\overline{\mathcal{M}}_{G^l_0}$ to class of $\overline{\mathcal{M}}_{g,0}(Q,d)$ whose virtual codimension is $0$. Each such graph $G^l_0$ gives rise to a contribution to $c$. Then we add up these contributions and show that
\[c=(5d+2-g)\big((5d+1-g)(g-1)+5d\big)=X_{g,d}.\]

Let $G^l_0$ be such a graph. There are two cases.

(I) Firstly, we assume that there exists one vertex $v_{\infty}\in V_{\infty}$ of $G^l_0$ such that $g(v_{\infty})=g$ and $\beta(v_{\infty})=d\alpha_{\infty}$. There must be no edges in $G^l_0$. Otherwise,
\[d\alpha_{\infty}=\sum_{v\in V_{\infty}}\beta(v)+\sum_{v\in V_0}\beta(v)+\sum_{e\in E}d(e)[F]> \beta(v_{\infty})=d\alpha_{\infty}.\]
The connectedness of $G_0^l$ further implies that $G_0^l$ contains only one vertex $v_{\infty}$. So we have
\[\overline{\mathcal{M}}_{G^l_0}\simeq \overline{\mathcal{M}}_{g,1}(Q,d)\]
and $|Aut(G^l_0)|=1$.

In this case, it is easy to see that $ev_1\circ \tau_{G_0^l}$ factors through $D_{\infty}$, i.e.
\[ev_1\circ \tau_{G_0^l}:\overline{\mathcal{M}}_{G^l_0}\longrightarrow D_{\infty}\simeq Q
\stackrel{\iota_{\infty}}{\longrightarrow} Y.\]
The restriction of equivariant class $[D_{\infty}]$ to $D_{\infty}$ becomes
\[t+c_1(N_{Q/\mathbb{P}^4}).\]
So
\[\tau_{G_0^l}^*\big(ev_1^*([D_{\infty}])\big)=t+\tau_{G_0^l}^*\big(ev_1^*(c_1(N_{Q/\mathbb{P}^4}))\big).\]

The restriction of $\psi_1$ to $\overline{\mathcal{M}}_{G^l_0}$ becomes a $\psi$-class of $\overline{\mathcal{M}}_{g,1}(Q,d)$.

So we may conclude that
\[\tau_{G^l_0}^*(\omega_T)=\prod_{k=0}^{5d+1-g}\big(k\psi_1+ev_1^*(c_1(N_{Q/\mathbb{P}^4}))+t\big).\]
Here, since $ev_1\circ \tau_{G_0^l}$ can also be seen as an evaluation map of $\overline{\mathcal{M}}_{g,1}(Q,d)$, for simplicity we omit $\tau_{G_0^l}$ and use $ev_1$ to denote it.

It is easy to see that $\overline{F}_{G^l_0}$ is the simple fixed locus. So by (\ref{el-simpls}), we have
\[\frac{1}{e_T(N_{G^l_0}^{vir})}=\frac{1}{e_{T}(H^0/H^1(f_{v_{\infty}}^*N_{Q/\mathbb{P}^4}))}=\frac{1}{e_T(N_{g,1,d})}.\]
We may express $e_{T}(N_{g,1,d})$ as
\[t^{\text{ch}_0(N_{g,1,d})}\text{exp}\Big(\sum_{i=1}^{\infty}\frac{(-1)^{i-1}(i-1)!}{t^i}\text{ch}_i(N_{g,1,d})\Big).\]
By Riemann-Roch theorem
\[\text{ch}_0(N_{g,1,d})=dim\,H^0(\Sigma,f_{v_{\infty}}^*N_{Q/\mathbb{P}^4})-dim\,H^1(\Sigma,f_{v_{\infty}}^*N_{Q/\mathbb{P}^4})=5d+1-g.\]
So we may further write it as
\[t^{5d+1-g}\text{exp}\Big(\sum_{i=1}^{\infty}\frac{(-1)^{i-1}(i-1)!}{t^i}\text{ch}_i(N_{g,1,d})\Big).\]

Now the term
\[\frac{\tau_{G^l_0}^*(\omega_T)\cap[\overline{\mathcal{M}}_{G^l_0}]^{vir}}{e_T(N_{G^l_0}^{vir})}\]
in (\ref{lc-1pt}) becomes
\[\frac{\prod_{k=0}^{5d+1-g}\bigr(k\psi_1+ev_1^*\big(c_1(N_{Q/\mathbb{P}^4})\big)+t\bigr)}{t^{5d+1-g}\text{exp}\Big(\sum_{i=1}^{\infty}\frac{(-1)^{i-1}(i-1)!}{t^i}\text{ch}_i(N_{g,1,d})\Big)}\cap [\overline{\mathcal{M}}_{g,1}(Q,d)]^{vir}.\]

It is easy to compute that the $t^0$-part $\omega_0$ becomes
\[\frac{(5d+2-g)(5d+1-g)}{2}\psi_1+(5d+2-g) ev_1^*\big(c_1(N_{Q/\mathbb{P}^4})\big)-\text{ch}_1(N_{g,1,d}).\]

In this case, the morphism
\[\pi\circ\tau_{G^l_0}:\overline{\mathcal{M}}_{g,1}(Q,d)\longrightarrow \overline{\mathcal{M}}_{g,0}(Q,d)\]
just becomes the forgetful morphism which forget the only marked point.

So it is easy to deduce from dilaton equation, divisor equation and Lemma \ref{TGW} that
\[\frac{1}{|Aut(G^l_0)|}(\pi\circ\tau_{G^l_0})_*\Big(\omega_0\cap[\overline{\mathcal{M}}_{g,1}(Q,d)]^{vir}\Big)\]
equals to $X_{g,d}[\overline{\mathcal{M}}_{g,0}(Q,d)]^{vir}$ where $X_{g,d}$ is given by (\ref{X-gd}).

We may summarize that the contribution of the localization graph in case (I) to the scalar $c$ is $X_{g,d}$.

(II) Next, we suppose that there is a vertex $v_0\in V_0$ of $G^l_0$ such that $g(v_0)=g$ and $p_*\big(\beta(v_0)\big)=d\alpha$. Here, we recall that $p:Y\rightarrow Q$ is the natural projection map.

Firstly, we show that $v_0$ is the only vertex of $V_0$.

Suppose that the target for a stable relative map $f$ in the fixed locus $\overline{F}_{G^l_0}$ is $Y_s\cup Y$. Let $\Sigma_{v_0}$ be the connected component corresponding to $v_0$. We use $f_{v_0}$ to denote the restriction of $f$ to $\Sigma_{v_0}$. $f_{v_0}$ maps $\Sigma_{v_0}$ to $Y_s$.

Recall that
\[c_{s_1,l_1}: Y_s=Y_{s_1}\cup Y\cup Y_{l_1}\longrightarrow Y\]
is the natural contraction to the central $Y$. The curve class
\[\beta_{v_0}=(c_{s_1,l_1}\circ f_{v_0})_*([\Sigma_{v_0}]).\]
is independent of the choosing of $s_1$, $l_1$ by the predeformability condition.

We may write $\beta_{v_0}\in H_2(Y)$ as
\[d_{v_0}\alpha_{\infty}+m_{v_0}[F]\]
where $d_{v_0}$ and $m_{v_0}$ are some integers.

Now by definition,
\[\beta(v_0)=(\iota_0\circ p)_*(\beta_{v_0})=d_{v_0}\alpha_0\]
where $\alpha_0$ is the push-forward of $\alpha$ under the inclusion map $\iota_0:Q\simeq D_0\hookrightarrow Y$. The assumption $p_*\big(\beta(v_0)\big)=d\alpha$ then implies that $d_{v_0}=d$.

The condition $f^{-1}(D_0)=0$ implies that $f_{v_0}^{-1}(D_0)=0$. So
\[m_{v_0}=D_0\cdot\beta_{v_0}=0.\]

Now if there exists another vertex $v'_0$ of $V_0$. The curve class $\beta_{v'_0}$ can not be zero. Otherwise, the predeformability condition implies that no edges are incident to $v'_0$, which contradicts to the condition that $G^l_0$ is connected. We may write $\beta_{v'_0}$ as
\[d_{v'_0}\alpha_{\infty}+m_{v'_0}[F].\]
Similarly, we can show that $m_{v'_0}=0$. Since $\beta_{v'_0}\neq 0$ and $\beta_{v'_0}$ is an effective curve class, we know that $d_{v'_0}>0$.

Now since
\[d\alpha_{\infty}=\sum_{v\in V_{\infty}}\beta(v)+\sum_{v\in V_0}\beta(v)+\sum_{e_r}d(e_r)[F],\]
we have
\[d\alpha=p_*(d\alpha_{\infty})= p_*\Big(\sum_{v\in V_{\infty}}\beta(v)+\sum_{v\in V_0}\beta(v)\Big)\geq p_*\big(\beta(v_0)+\beta(v'_0)\big)>d\alpha.\]

The contradiction implies that there is only one vertex $v_0$ in $V_0$. The inequality
\[d\alpha=p_*(d\alpha_{\infty})\geq p_*\Big(\sum_{v\in V_{\infty}}\beta(v)+\sum_{v\in V_0}\beta(v)\Big)\geq p_*(\beta(v_0))=d\alpha\]
further implies that for each $v\in V_{\infty}$, $p_*(\beta(v))=0$. Since those $\beta(v)$ can be written as $d_v\alpha_{\infty}$, we may deduce that $\beta(v)=0$ for all $v\in V_{\infty}$. So all the components corresponding to vertices in $V_{\infty}$ are contractible.

Since $g(v_0)=g$, we may deduce that each vertex $v_i\in V_{\infty}$ satisfies $g(v_i)=0$ and there will be no loops in the graph $G_0^l$. So there is only one edge $e_i$ connecting $v_i$ to $v_0$.

Since there is only one marked point, the stable condition requires that components corresponding to vertices in $V_{\infty}$ must all degenerate into points.

Let
\[f_{v_0}^{-1}(D_{\infty})=\sum_{i=1}^s d_iz_i.\]
Since
\[\beta_{v_0}\cdot D_{\infty}=d\alpha_{\infty}\cdot D_{\infty}=5d,\]
we have $\sum_i d_i=5d$. By the predeformability condition, we know that each point $z_i$ corresponds to one edge (we suppose that this edge is $e_i$ and $d(e_i)=d_i$). It also implies that the number of edges $|E|=s$.

If the only marked point $x_1$ is distributed to the vertex $v_0$, then the evaluation map $ev_1$ factors through $D_0$, i.e.
\[ev_1:\overline{\mathcal{M}}_{G_0^l}\rightarrow D_0\hookrightarrow Y.\]
So $ev_1^*([D_{\infty}])=0$. Then
\[\tau_{G^l_0}^*(\omega_T)=ev_1^*([D_{\infty}])\prod_{k=1}^{5d+1-g}\big(k\psi_1+ev_1^*([D_{\infty}])\big)=0.\]

So we may assume that $x_1$ is distributed to some vertex in $V_{\infty}$. Suppose that this vertex is $v_1$. In this case, $ev_1$ factors through $D_{\infty}$, i.e.
\[ev_1:\overline{\mathcal{M}}_{G_0^l}\stackrel{ev_1^{E}}{\longrightarrow}Q\simeq D_{\infty}\hookrightarrow Y,\]
where $ev_1^{E}$ is defined as (\ref{rel-eva}).

Similar to the discussion in case (I), we have
\[ev_1^*([D_{\infty}])=t+(ev_1^{E})^*\big(c_1(N_{Q/\mathbb{P}^4})\big).\]

The $d_1$th tensor power of the cotangent line bundle $L_1$ can be identified as $(ev_1^{E})^*(N_{D_{\infty}/Y}^*)\simeq (ev_1^{E})^*(N_{Q/\mathbb{P}^4}^*)$. So
\[\psi_1=-\frac{t+(ev_1^{E})^*\big(c_1(N_{Q/\mathbb{P}^4})\big)}{d_1}.\]
Now it is easy to see that
\[
\tau_{G^l_0}^*(\omega_T)=\Big(t+(ev_1^{E})^*\big(c_1(N_{Q/\mathbb{P}^4})\big)\Big)^{5d+2-g} \prod_{k=0}^{5d+1-g}\Big(1-\frac{k}{d_1}\Big).
\]

Obviously, if $d_1\leq 5d+1-g$, then $\tau_{G^l_0}^*(\omega_T)=0$. So we may assume that $d_1\geq 5d+2-g$.

In this case, by (\ref{rvlf}) we have
\[\frac{1}{e_T(N_{G^l_0}^{vir})}=\frac{\prod_{r=1}^s d_r }{-t-{\psi}_{\infty}}\prod_{r=1}^s\frac{1}{N(v_r)}\prod_{r=1}^s\frac{t+(ev_r^{E})^*\big(c_1(N_{Q/\mathbb{P}^4})\big)}{\bigr(t+(ev_r^{E})^*\big(c_1(N_{Q/\mathbb{P}^4})\big)\bigr)^{d_r}\frac{d_r!}{d_r^{d_r}}}.\]

Since $val(v_1)=2$ and the component corresponding to $v_1$ degenerates into the marked point $x_1$, we have
\[\frac{1}{N(v_1)}=\frac{1}{t+(ev_1^{E})^*\big(c_1(N_{Q/\mathbb{P}^4})\big)}\]
by (\ref{rvlf-ii}). Since $val(v_r)=1$ for $r\geq 2$, by (\ref{rvlf-i}) we have
\[\frac{1}{N(v_r)}=\frac{1}{d_r},~r\neq 1.\]

So $\tau_{G^l_0}^*(\omega_T)/e_T(N_{G^l_0}^{vir})$ equals to
\[\frac{\Big(t+(ev_1^{E})^*\big(c_1(N_{Q/\mathbb{P}^4})\big)\Big)^{5d+2-g-d_1} \prod_{k=0}^{5d+1-g}\Big(1-\frac{k}{d_1}\Big)}{(-t-\psi_{\infty})\frac{d_1!}{d_1^{d_1+1}}\prod_{r=2}^s \bigr(t+(ev_r^{E})^*\big(c_1(N_{Q/\mathbb{P}^4})\big)\bigr)^{d_r}\frac{d_r!}{d_r^{d_r}}}.
\]
It is easy to see that when $d_1\geq 5d+2-g$, the expansion of $\tau_{G^l_0}^*(\omega_T)/e_T(N_{G^l_0}^{vir})$ according to $t$ contains only negative powers of $t$. So the $t^0$-part of
\[\frac{1}{|Aut(G^l_0)|}(\pi\circ\tau_{G^l_0})_*\Bigg(\frac{\tau_{G^l_0}^*(\omega_T)\cap[\overline{\mathcal{M}}_{G^l_0}]^{vir}}{e_T(N_{G^l_0}^{vir})}\Bigg)\]
equals to zero.

We may summarize that the contribution of each $G^l_0$ in case (II) to the scalar $c$ is zero.

In conclusion, if we add up contributions from both case (I) and case (II), then we have $c=X_{g,d}$.
\end{proof}

Next, let us evaluate some $B_{g,d}(\zeta)$ with $|\zeta|=2$.

\begin{lemma}\label{typeI-nf2}
Let $\zeta_{g,d,m,l_1,l_2}^2=\{(l_1,m),(l_2,0)\}\in S_{g,d}$. We have
\[
B_{g,d}(\zeta_{g,d,m,l_1,l_2}^2)=
\begin{cases}
Y_{g,d}(l_1,l_2)N_{g,d},&\text{if}~m=0,\\
Z_{g,d}(l_1,l_2)N_{g,d},&\text{if}~m=1,\\
0,&\text{if}~m=2,3,\\
\end{cases}
\]
where
\[Y_{g,d}(l_1,l_2)=f_{g,d}(l_1,l_2)-f_{g,d}(5d+1-g,0)+5dX_{g,d}+5d(5d+1-g)\]
with $f_{g,d}(l_1,l_2)$ equals to
\begin{equation}\label{m0}
\begin{aligned}
&\frac{5d}{2}\Big(l_1^2(l_1+1)+l_2^2(l_2+1)\Big)+\\
&\frac{g-1}{12}
\Big(l_1(l_1^2-1)(3l_1+2)+l_2(l_2^2-1)(3l_2+2)\Big)+\\
&\frac{(l_1+1)(l_2+1)}{2}\Big(25d^2(2g+1)+(5d-l_1l_2)(2g-1)(1-g)\Big).\\
\end{aligned}
\end{equation}
And $X_{g,d}$ is defined by (\ref{X-gd}),
\[\begin{aligned}
&Z_{g,d}(l_1,l_2)=\\
&~~~5d^2(l_2+1)+\frac{l_1(l_1+1)+(2g-1)l_2(l_2+1)}{2}d-\frac{(5d+1-g)(5d-g)d}{2}.
\end{aligned}\]
\end{lemma}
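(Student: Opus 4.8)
The plan is to mirror the structure of the proof of Lemma \ref{typeI-nf1}, splitting into the ranges $m=2,3$ and $m=0,1$. Recall from case (B) of the degeneration analysis in Section \ref{partI} that
\[
B_{g,d}(\zeta^2_{g,d,m,l_1,l_2})=\int_{[\overline{\mathcal{M}}_{\Gamma}(Y,D_0)]^{vir}}ev_1^*(H^m)\prod_{k=0}^{l_1}\big(k\psi_1+ev_1^*([D_{\infty}])\big)\prod_{k=0}^{l_2}\big(k\psi_2+ev_2^*([D_{\infty}])\big),
\]
with $\Gamma=(g,d\alpha_{\infty},\emptyset,2)$. First I would dispose of $m=2,3$. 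Projecting curves down to the base gives $f:\overline{\mathcal{M}}_{\Gamma}(Y,D_0)\to \overline{\mathcal{M}}_{g,2}(Q,d)$ keeping both marked points; since $H$ is pulled back from $Q$, the projection formula factors out $ev_1^*(H^m)=(\underline{ev}_1\circ f)^*(H^m)$. The residual class capped with the virtual class has dimension $m$, while $\overline{\mathcal{M}}_{g,2}(Q,d)$ has virtual dimension $2$. For $m=3$ the dimension exceeds the target, so part (1) of Lemma \ref{vpfp-2} forces the pushforward, hence $B_{g,d}$, to vanish. For $m=2$ the dimensions agree, so part (2) yields a scalar multiple of $[\overline{\mathcal{M}}_{g,2}(Q,d)]^{vir}$; capping with $ev_1^*(H^2)$ then gives a multiple of $\langle\tau_0(H^2)\tau_0(1)\rangle^{Q}_{g,d}$, which vanishes by the string equation (here $d>0$). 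This settles the rows $m=2,3$.

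For $m=0,1$ the answer is a nontrivial multiple of $N_{g,d}$, which I would extract by the relative virtual localization formula (\ref{lc-0}) for the $\mathbb{C}^*$-action (\ref{caction}), exactly as in case (C) of Lemma \ref{typeI-nf1}. One projects to $\pi:\overline{\mathcal{M}}_{\Gamma}(Y,D_0)\to \overline{\mathcal{M}}_{g,0}(Q,d)$, forgetting both marked points, writes $\pi_*(\omega\cap[\,\cdot\,]^{vir})=c\,[\overline{\mathcal{M}}_{g,0}(Q,d)]^{vir}$ for the equivariantly lifted integrand $\omega$, so that $B_{g,d}=cN_{g,d}$, and reads off $c$ graph by graph. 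By the virtual pushforward properties of Lemmas \ref{vpfp-1} and \ref{vpfp-2}, only localization graphs with virtual codimension $0$ image survive. As in Lemma \ref{typeI-nf1} these are of two kinds: the simple fixed locus, where the whole genus-$g$, degree-$d$ curve lies over the section $D_{\infty}\cong Q$, and the graphs whose main vertex lies over $D_0$. The latter contribute nothing: a marked point on the $D_0$-vertex kills $\omega$ through the factor $ev_i^*([D_{\infty}])=0$, and the remaining configurations produce only negative powers of the equivariant parameter $t$, exactly by the power-counting of case (II) of Lemma \ref{typeI-nf1}.

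Everything thus reduces to the simple fixed locus $\cong\overline{\mathcal{M}}_{g,2}(Q,d)$. There I would substitute $ev_i^*([D_{\infty}])=t+ev_i^*(c_1(N_{Q/\mathbb{P}^4}))=t+5\,ev_i^*(H)$, retain $\psi_i$ as descendant classes, and use $e_T(N^{vir})^{-1}=t^{-(5d+1-g)}\exp\!\big(\sum_{i\geq 1}\tfrac{(-1)^{i-1}(i-1)!}{t^i}\text{ch}_i(N_{g,2,d})\big)$ as before. Extracting the $t^0$-coefficient, pushing forward by the forgetful map $\overline{\mathcal{M}}_{g,2}(Q,d)\to\overline{\mathcal{M}}_{g,0}(Q,d)$, and evaluating the resulting descendant and twisted integrals by the string, dilaton and divisor equations together with the vanishing Lemma \ref{TGW} (which annihilates every $\text{ch}_{\geq 1}$ contribution) expresses $c$ as an explicit polynomial in $l_1,l_2$. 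For $m=1$ the extra factor $ev_1^*(H)$ only shifts this bookkeeping and is absorbed by a divisor-equation factor of $d$, which is consistent with the overall factors of $d$ appearing in $Z_{g,d}(l_1,l_2)$.

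The main obstacle is precisely this last step. The $t^0$-extraction produces a sum of two-point descendant integrals $\langle\tau_{a}(H^{b_1})\tau_{c}(H^{b_2})\rangle^{Q}_{g,d}$ with $0\leq b_i\leq 3$, and assembling them into the closed forms $Y_{g,d}(l_1,l_2)$ and $Z_{g,d}(l_1,l_2)$ demands careful use of $H^3=5[\mathrm{pt}]$ and $H^4=0$ on $Q$, repeated applications of the string equation to eliminate both marked points, and the constraint $l_1+l_2+m=5d+1-g$ coming from $\zeta\in S_{g,d}$ (Definition \ref{maindf-1}). I expect the polynomial $f_{g,d}$ of (\ref{m0}), together with the correction $-f_{g,d}(5d+1-g,0)+5dX_{g,d}+5d(5d+1-g)$ with $X_{g,d}$ as in (\ref{X-gd}), to emerge only after this delicate simplification; checking that no $\text{ch}_{\geq 2}$ term survives and that the seemingly $l_i$-dependent pieces collapse to the stated quadratic and cubic expressions is the heart of the calculation.
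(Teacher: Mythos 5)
Your overall frame (project by $\pi$ to $\overline{\mathcal{M}}_{g,0}(Q,d)$, write $B_{g,d}=cN_{g,d}$ via Lemma \ref{vpfp-2}, extract $c$ by relative virtual localization) and your disposal of $m=2,3$ are consistent with the paper. The gap is your assertion that, for $m=0,1$, every composite fixed locus (principal vertex $v_0\in V_0$) dies either because $ev_i^*([D_{\infty}])=0$ or "by the power-counting of case (II) of Lemma \ref{typeI-nf1}." That power counting depends on the marked point lying on a valence-$2$ contracted vertex over $D_{\infty}$, where $\psi_i$ restricts to $-\bigl(t+(ev_i^{E})^*(c_1(N_{Q/\mathbb{P}^4}))\bigr)/d_i$ and the factor $\prod_{k}(1-k/d_i)$ forces $d_i\geq l_i+1$, leaving only negative powers of $t$. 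With two marked points there is a third configuration you never address: both points assigned to the \emph{same} contracted genus-$0$ vertex over $D_{\infty}$. That component carries three special points, so $\psi_1,\psi_2$ restrict to zero, no vanishing product appears, and $\tau_{G^l_0}^*(\omega_T)$ becomes $(ev_1^{E})^*(H^m)\bigl(t+(ev_1^{E})^*(c_1(N_{Q/\mathbb{P}^4}))\bigr)^{5d+3-g-m}$ with no constraint on the edge degrees. These graphs produce genuinely nonzero $t^0$-contributions involving rubber integrals against $\psi_{\infty}$, and your plan has no mechanism to evaluate or cancel them.

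The paper's proof is built around exactly this obstacle: it introduces the auxiliary class $\omega^{2}=ev_1^*(H^m)\prod_{k=0}^{5d+1-g-m}\bigl(k\psi_1+ev_1^*([D_{\infty}])\bigr)ev_2^*([D_{\infty}])$ and splits $\omega^{1}=(\omega^{1}-\omega^{2})+\omega^{2}$. On the problematic loci one has $\tau_{G^l_0}^*(\omega_T^1)=\tau_{G^l_0}^*(\omega_T^2)$, so the difference localizes entirely on the simple fixed locus, where your proposed $t^0$-extraction via string, dilaton, divisor equations and Lemma \ref{TGW} is indeed the right computation; meanwhile $\pi_*\bigl(\omega^{2}\cap[\overline{\mathcal{M}}_{\Gamma}(Y,D_0)]^{vir}\bigr)$ is evaluated without localization, by the divisor equation and Lemma \ref{typeI-nf1}. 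Without this subtraction, or some substitute for computing the rubber contributions directly (compare the considerable effort of Lemma \ref{keytrick} in the three-point case), your argument stalls at the composite loci and cannot produce $Y_{g,d}$ or $Z_{g,d}$.
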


\begin{remark}
If $m=3$, then $B_{g,d}(\zeta_{g,d,m,l_1,l_2}^2)$ can be computed by applying the virtual pushforward property of Lemma \ref{vpfp-2}. If $m=2$, then $B_{g,d}(\zeta_{g,d,m,l_1,l_2}^2)$ can be computed by using Proposition 5.3.2 in \cite{Ga1}. We will compute $B_{g,d}(\zeta_{g,d,m,l_1,l_2}^2)$ in a new and uniform way .
\end{remark}

\begin{proof}
Let
\[\pi:\overline{\mathcal{M}}_{\Gamma}(Y,D_0)\rightarrow \overline{\mathcal{M}}_{g,0}(Q,d)\]
be the morphism which projects the curves in $Y$ down to the base and forget all the marked points. Here $\Gamma=(g,d\alpha_{\infty},\emptyset,2)$.

Let
\[\omega^{1}= ev_1^*(H^m)\prod_{k_1=0}^{l_1}\Big(k_1\psi_1+ev_1^*\big([D_{\infty}]\big)\Big)\prod_{k_2=0}^{l_2}\Big(k_2\psi_2+ev_2^*\big([D_{\infty}]\big)\Big)\]
be a cohomology class of $Y$. The condition $\{(l_1,m),(l_2,0)\}\in S_{g,d}$ implies that the dimension of
\[\text{dim}\Big(\omega^{1}\cap [\overline{\mathcal{M}}_{\Gamma}(Y,D_0)]^{vir}\Big)=0=\text{dim}\Big([\overline{\mathcal{M}}_{g,0}(Q,d)]^{vir}\Big).\]
So by Lemma \ref{vpfp-2}, we have
\[\pi_*\Big(\omega^{1}\cap [\overline{\mathcal{M}}_{\Gamma}(Y,D_0)]^{vir}\Big)=c[\overline{\mathcal{M}}_{g,0}(Q,d)]^{vir}.\]
It further implies that $B_{g,d}(\zeta_{g,d,m,l_1,l_2}^2)=cN_{g,d}$.

It is not easy to compute $c$ by directly applying the relative virtual localization formula (\ref{lc-0}). To get over it, we need to introduce another cohomology class
\[\omega^{2}=ev_1^*(H^m) \prod_{k_1=0}^{5d+1-g-m}\Big(k_1\psi_1+ev_1^*\big([D_{\infty}]\big)\Big)ev_2^*([D_{\infty}])\]
and use the following equality
\[\pi_*\Big(\omega^{1}\cap [\overline{\mathcal{M}}_{\Gamma}(Y,D_0)]^{vir}\Big)=\pi_*\Big(\bigr((\omega^{1}-\omega^{2})+\omega^{2}\bigr)\cap [\overline{\mathcal{M}}_{\Gamma}(Y,D_0)]^{vir}\Big).\]
It turns out that the part
\begin{equation}\label{part-1}
\pi_*\Big((\omega^{1}-\omega^{2})\cap [\overline{\mathcal{M}}_{\Gamma}(Y,D_0)]^{vir}\Big)
\end{equation}
can be computed by using the relative virtual localization formula (\ref{lc-0}) and
\begin{equation}\label{part-2}
\pi_*\Big(\omega^{2}\cap [\overline{\mathcal{M}}_{\Gamma}(Y,D_0)]^{vir}\Big)
\end{equation}
can be computed by using the divisor equation and Lemma \ref{typeI-nf1}.

Let us compute (\ref{part-1}) at first.

By the virtual property of Lemma \ref{vpfp-2}, we know that
\[\pi_*\Big((\omega^{1}-\omega^{2})\cap [\overline{\mathcal{M}}_{\Gamma}(Y,D_0)]^{vir}\Big)=C^{1,2}[\overline{\mathcal{M}}_{g,0}(Q,d)]^{vir},\]
where $C^{1,2}$ is some scalar to be determined.

We will determine $C^{1,2}$ by using the relative virtual localization formula.

Similar to the proof of Lemma \ref{typeI-nf1} case (C), $\omega^{1}$ and $\omega^{2}$ have natural equivariant lifts $\omega_T^{1}$ and $\omega_T^{2}$ respectively.

Now by the relative virtual localization formula (\ref{lc-0}), (\ref{part-1}) equals to the non-equivariant limit of
\begin{equation*}
\sum_{G^l_0}\frac{1}{|Aut(G^l_0)|}(\pi\circ\tau_{G^l_0})_*\Biggr(\frac{\tau_{G^l_0}^*(\omega_T)\cap[\overline{\mathcal{M}}_{G^l_0}]^{vir}}{e_T(N_{G^l_0}^{vir})}\Biggr)
\end{equation*}
where $\omega_T=\omega^{1}_T-\omega^{2}_T$.

Just as in the proof of Lemma \ref{typeI-nf1} case (C), we only need to consider the following two types of graphs $G^l_0$.

(I) If there exists one vertex $v_{\infty}\in V_{\infty}$ of $G^l_0$ such that $g(v_{\infty})=g$ and $\beta(v_{\infty})=d\alpha_{\infty}$, then we have
\[\overline{\mathcal{M}}_{G^l_0}\simeq \overline{\mathcal{M}}_{g,2}(Q,d)\]
and the automorphism group $Aut(G^l_0)$ satisfies $|Aut(G^l_0)|=1$. The morphism
\[\pi\circ\tau_{G^l_0}:\overline{\mathcal{M}}_{g,2}(Q,d)\longrightarrow \overline{\mathcal{M}}_{g,0}(Q,d)\]
just becomes the forgetful morphism which forget all the marked points.

Let us consider the case $m=0$ at first.

In this case, $\tau_{G^l_0}^*(\omega_T^1)$ becomes
\[\prod_{k_1=0}^{l_1}\Big(k_1\psi_1+ev_1^*(c_1(N_{Q/\mathbb{P}^4}))+t\Big)\prod_{k_2=0}^{l_2}\Big(k_2\psi_2+ev_2^*(c_1(N_{Q/\mathbb{P}^4}))+t\Big),\]
and
\[\frac{1}{e_{T}(N_{g,2,d})}=\frac{1}{t^{5d+1-g}\text{exp}\Big(\sum_{i=1}^{\infty}\frac{(-1)^{i-1}(i-1)!}{t^i}\text{ch}_i(N_{g,2,d})\Big)}.\]
After expansion according to $t$, the $t^0$-part of
\[\frac{\prod_{k_1=0}^{l_1}\Big(k_1\psi_1+ev_1^*(c_1(N_{Q/\mathbb{P}^4}))+t\Big)\prod_{k_2=0}^{l_2}\Big(k_2\psi_2+ev_2^*(c_1(N_{Q/\mathbb{P}^4}))+t\Big)}{t^{5d+1-g}\text{exp}\Big(\sum_{i=1}^{\infty}\frac{(-1)^{i-1}(i-1)!}{t^i}\text{ch}_i(N_{g,2,d})\Big)}\]
denoted by $\omega^{1}_0$ equals to
\begin{equation}\label{degree0-2pt}
\begin{aligned}
&\sum_{k_1=0}^{l_1}\Big(k_1\psi_1+ev_1^*\bigr(c_1(N_{Q/\mathbb{P}^4})\bigr)\Big)\sum_{k_2=0}^{l_2}\Big(k_2\psi_2+ev_2^*\bigr(c_1(N_{Q/\mathbb{P}^4})\bigr)\Big)\\
+&\sum_{0\leq k_1'<k_1\leq l_1}\Big(k_1'\psi_1+ev_1^*\bigr(c_1(N_{Q/\mathbb{P}^4})\bigr)\Big)\Big(k_1\psi_1+ev_1^*\bigr(c_1(N_{Q/\mathbb{P}^4})\bigr)\Big)\\
+&\sum_{0\leq k_2'<k_2\leq l_2}\Big(k_2'\psi_2+ev_2^*\bigr(c_1(N_{Q/\mathbb{P}^4})\bigr)\Big)\Big(k_2\psi_2+ev_2^*\bigr(c_1(N_{Q/\mathbb{P}^4})\bigr)\Big)
\end{aligned}
\end{equation}
plus terms which contain at least one $\text{ch}_i(N_{g,2,d})$ (for some $i>0$).

We need to compute
\[(\pi\circ\tau_{G^l_0})_*\Big(\omega^{1}_0\cap[\overline{\mathcal{M}}_{g,1}(Q,d)]^{vir}\Big)\]
in the next.

By Lemma \ref{TGW}, we only need to consider those terms of $\omega^{1}_0$ in (\ref{degree0-2pt}). After expanding (\ref{degree0-2pt}) and collecting the same terms, it becomes
\[
\begin{aligned}
&\frac{l_1(l_1+1)(l_2+1)}{2}\psi_1ev_2^*\bigr(c_1(N_{Q/\mathbb{P}^4})\bigr)+\frac{l_2(l_2+1)(l_1+1)}{2}\psi_2ev_1^*\bigr(c_1(N_{Q/\mathbb{P}^4})\bigr)+\\
&\frac{l_1l_2(l_1+1)(l_2+1)}{4}\psi_1\psi_2+(l_1+1)(l_2+1)ev_1^*\big(c_1(N_{Q/\mathbb{P}^4})\bigr)ev_2^*\bigr(c_1(N_{Q/\mathbb{P}^4})\bigr)+\\
&\sum_{i=1}^2\Bigg(\frac{l_i(l_i^2-1)(3l_i+2)}{24}\psi_i^2+\\
&~~~~~~~~\frac{l_i^2(l_i+1)}{2}\psi_iev_i^*\bigr(c_1(N_{Q/\mathbb{P}^4})\bigr)+\frac{l_i(l_i+1)}{2}ev_i^*\bigr(c_1(N_{Q/\mathbb{P}^4})\bigr)^2\Bigg).
\end{aligned}
\]

By string, dilaton and divisor equations, it is easy to compute that
\[
\begin{aligned}
(\pi\circ\tau_{G^l_0})_*\Big(\psi_1ev_2^*\bigr(c_1(N_{Q/\mathbb{P}^4})\bigr)\cap [\overline{\mathcal{M}}_{g,2}(Q,d)]^{vir}\Big)&=&5d(2g-1)[\overline{\mathcal{M}}_{g,0}(Q,d)]^{vir},\\
(\pi\circ\tau_{G^l_0})_*\Big(\psi_2ev_1^*\bigr(c_1(N_{Q/\mathbb{P}^4})\bigr)\cap [\overline{\mathcal{M}}_{g,2}(Q,d)]^{vir}\Big)&=&5d(2g-1)[\overline{\mathcal{M}}_{g,0}(Q,d)]^{vir},\\
(\pi\circ\tau_{G^l_0})_*\Big(\psi_1\psi_2 \cap [\overline{\mathcal{M}}_{g,2}(Q,d)]^{vir}\Big)&=& (2g-1)(2g-2)[\overline{\mathcal{M}}_{g,0}(Q,d)]^{vir},\\
(\pi\circ\tau_{G^l_0})_*\Big(\prod_{i=1}^2 ev_i^*\big(c_1(N_{Q/\mathbb{P}^4})\bigr)\cap [\overline{\mathcal{M}}_{g,2}(Q,d)]^{vir}\Big)&=& 25d^2[\overline{\mathcal{M}}_{g,0}(Q,d)]^{vir},\\
(\pi\circ\tau_{G^l_0})_*\Big(\psi_i^2\cap [\overline{\mathcal{M}}_{g,2}(Q,d)]^{vir}\Big)&=& (2g-2)[\overline{\mathcal{M}}_{g,0}(Q,d)]^{vir},\\
(\pi\circ\tau_{G^l_0})_*\Big(\psi_iev_i^*\bigr(c_1(N_{Q/\mathbb{P}^4})\bigr) \cap [\overline{\mathcal{M}}_{g,2}(Q,d)]^{vir}\Big)&=& 5d[\overline{\mathcal{M}}_{g,0}(Q,d)]^{vir},\\
(\pi\circ\tau_{G^l_0})_*\Big(ev_i^*\bigr(c_1(N_{Q/\mathbb{P}^4})\bigr)^2 \cap [\overline{\mathcal{M}}_{g,2}(Q,d)]^{vir}\Big)&=& 0.
\end{aligned}
\]
After simplification, we have
\[(\pi\circ\tau_{G^l_0})_*\Big(\omega^{1}_0\cap[\overline{\mathcal{M}}_{g,1}(Q,d)]^{vir}\Big)=f_{g,d}(l_1,l_2)[\overline{\mathcal{M}}_{g,0}(Q,d)]^{vir}\]
where $f_{g,d}(l_1,l_2)$ is given by (\ref{m0}). We remark that we have used the fact that
\[l_1+l_2=5d+1-g\]
in the simplification operation.

It is clear that the non-equivariant limit of
\[\frac{1}{|Aut(G^l_0)|}(\pi\circ\tau_{G^l_0})_*\Biggr(\frac{\tau_{G^l_0}^*(\omega^{2}_T)\cap[\overline{\mathcal{M}}_{G^l_0}]^{vir}}{e_T(N_{G^l_0}^{vir})}\Biggr)\]
can be computed by replacing $l_1$ with $5d+1-g$ and $l_2$ with $0$.

So the contribution of graph $G_0^l$ in case (I) when $m=0$ to the scalar $C^{1,2}$ is
\[f_{g,d}(l_1,l_2)-f_{g,d}(5d+1-g,0).\]

The contribution of graph $G_0^l$ in case (I) when $m=1,2,3$ can be similarly derived. If we denote the contribution of graph $G_0^l$ by $C_{I}$, then we have
\begin{equation}\label{new-2}
C_{I}=
\begin{cases}
f_{g,d}(l_1,l_2)-f_{g,d}(5d+1-g,0),&\text{if}~m=0,\\
Z_{g,d}(l_1,l_2)-Z_{g,d}(5d-g,0),&\text{if}~m=1,\\
0,&\text{if}~m=2,3,\\
\end{cases}
\end{equation}
where
\[\begin{aligned}
&Z_{g,d}(l_1,l_2)=\\
&~~~5d^2(l_2+1)+\frac{l_1(l_1+1)+(2g-1)l_2(l_2+1)}{2}d-\frac{(5d+1-g)(5d-g)d}{2}.
\end{aligned}\]

(II) Next, we suppose that there exists one vertex $v_0\in V_0$ satisfying $g(v_0)=g$ and $p_*\big(\beta(v_0)\big)=d\alpha$.

Similar to the proof of Lemma \ref{typeI-nf1} case (C), we may deduce that $v_0$ is the only vertex in $V_0$, and each component mapped to $D_{\infty}$ is contractible with genus $0$, whose corresponding vertex $v_i$ connects to $v_0$ via only one edge $e_i$. The two absolute marked points must be assigned to vertices in $V_{\infty}$. We may assume that there are $s$ edges $e_1,\ldots,e_s$ with degrees $d_1,\ldots,d_s$ respectively.

Now we need to discuss according to the assignment of the two absolute marked points.

(a) Firstly, we suppose that the two absolute marked points $x_1$, $x_2$ are assigned to two different vertices in $V_{\infty}$. Without loss of generality, we assume that $x_1$ and $x_2$ are assigned to $v_1$ and $v_2$ respectively.

The evaluation maps $ev_i$ in this case factor through $D_{\infty}$, i.e.
\[ev_i:\overline{\mathcal{M}}_{G_0^l}\stackrel{ev_i^{E}}{\longrightarrow}Q\simeq D_{\infty}\hookrightarrow Y.\]
Similar to the proof of Lemma \ref{typeI-nf1} case (C), we can compute that
\begin{eqnarray*}
\tau_{G^l_0}^*(\omega_T^1)&=&(ev_1^{E})^*(H^m)\Big(t+(ev_1^{E})^*\big(c_1(N_{Q/\mathbb{P}^4})\big)\Big)^{l_1+1} \prod_{k_1=0}^{l_1}\Big(1-\frac{k_1}{d_1}\Big)\times\\
&& \Big(t+(ev_2^{E})^*\big(c_1(N_{Q/\mathbb{P}^4})\big)\Big)^{l_2+1} \prod_{k_2=0}^{l_2}\Big(1-\frac{k_2}{d_2}\Big).
\end{eqnarray*}
Obviously, it equals to $0$ unless $d_1\geq l_1+1$ and $d_2\geq l_2+1$.

By (\ref{rvlf}), the inverse of $e_T(N_{G^l_0}^{vir})$ is given by
\[\frac{\prod_{r=1}^s d_r }{-t-{\psi}_{\infty}}\prod_{r=1}^s\frac{1}{N(v_r)}\prod_{r=1}^s\frac{t+(ev_r^{E})^*\big(c_1(N_{Q/\mathbb{P}^4})\big)}{\bigr(t+(ev_r^{E})^*\big(c_1(N_{Q/\mathbb{P}^4})\big)\bigr)^{d_r}\frac{d_r!}{d_r^{d_r}}}\]
where
\[\frac{1}{N(v_r)}=\frac{1}{t+(ev_r^{E})^*(c_1(N_{Q/\mathbb{P}^4}))},~r=1,2;~~~~~\frac{1}{N(v_r)}=\frac{1}{d_r},~r\neq 1,2.\]
So $\tau_{G^l_0}^*(\omega_T^1)/e_T(N_{G^l_0}^{vir})$ equals to
\begin{equation}\label{new-1}
\frac{(ev_1^{E})^*(H^m)\omega_{1,2}\prod_{k_1=0}^{l_1}\Big(1-\frac{k_1}{d_1}\Big)\prod_{k_2=0}^{l_2}\Big(1-\frac{k_2}{d_2}\Big)}{(-t-\psi_{\infty})\frac{d_1!}{d_1^{d_1+1}}\frac{d_2!}{d_2^{d_2+1}}\prod_{r=3}^s \bigr(t+(ev_r^{E})^*\big(c_1(N_{Q/\mathbb{P}^4})\big)\bigr)^{d_r}\frac{d_r!}{d_r^{d_r}}}
\end{equation}
where
\[\omega_{1,2}=\Big(t+(ev_1^{E})^*\big(c_1(N_{Q/\mathbb{P}^4})\big)\Big)^{l_1+1-d_1}\Big(t+(ev_2^{E})^*\big(c_1(N_{Q/\mathbb{P}^4})\big)\Big)^{l_2+1-d_2}.  \]
Obviously, if $d_1\geq l_1+1$ and $d_2\geq l_2+1$, the expansion of (\ref{new-1}) according to $t$ contains only negative powers of $t$. The same is true after we replace $\omega_T^1$ by $\omega_T^2$.

So we may conclude that the contributions of those graphs $G_0^l$ in case (a) to the scalar $C^{1,2}$ are all zero.

(b) Next, we assume that the two absolute marked points are assigned to the same vertex. Without loss of generality, we assume that this vertex is $v_1$.

The component corresponding to $v_1$ is a contractible genus $0$ curve with three special points on it. One comes from the edge $e_1$ and the other two are $x_1$ and $x_2$. So $\tau_{G_0^l}^*(\psi_1)$, $\tau_{G_0^l}^*(\psi_2)$ all become zero. In this case, we also have
\[ev_1=ev_2=ev_1^{E}.\]

Now it is easy to compute that
\[\tau_{G^l_0}^*(\omega_T^1)=(ev_1^{E})^*(H^m)\Big(t+(ev_1^{E})^*\big(c_1(N_{Q/\mathbb{P}^4})\big)\Big)^{5d+3-g-m} =\tau_{G^l_0}^*(\omega_T^2).\]
So we have
\[\tau_{G^l_0}^*(\omega_T^1-\omega_T^2)=0.\]

From the discussion in both cases (a) and (b), we may conclude that the contributions of those graphs in case (II) to the scalar $C^{1,2}$ are all zero.

Combining the discussion in both cases (I) and (II), we may conclude that the scalar
\[C^{1,2}=C_{I}\]
where $C_I$ is given by (\ref{new-2}).

We are left to compute (\ref{part-2}), which is
\[\pi_*\Big(ev_1^*(H^m) \prod_{k_1=0}^{5d+1-g-m}\Big(k_1\psi_1+ev_1^*\big([D_{\infty}]\big)\Big)ev_2^*([D_{\infty}])\cap [\overline{\mathcal{M}}_{\Gamma}(Y,D_0)]^{vir}\Big).\]
By the virtual property of Lemma \ref{vpfp-2}, we know that it equals to
\[C^2[\overline{\mathcal{M}}_{g,0}(Q,d)]^{vir}\]
where $C^2$ is some scalar to be determined. So the corresponding relative Gromov-Witten invariant
\[\Big\langle\Big|\Big\{H^m\prod_{k_1=0}^{5d+1-g-m}\big(k_1\tau+[D_{\infty}]\big)\Big\}[D_{\infty}]\Big\rangle_{g,d\alpha_{\infty}}=C^2N_{g,d}.\]
Now the L.H.S can be computed by using Lemma \ref{typeI-nf1} and divisor equation. The latter can be proven by the standard cotangent line
comparison method (see \cite{MP}, Section 1.5.4 for the rubber case). For simplicity, we omit the details of computation and just list the results in the below.
\begin{equation*}
C^2=
\begin{cases}
5dX_{g,d}+5d(5d+1-g),&\text{if}~m=0,\\
5d^2,&\text{if}~m=1,\\
0, &\text{if}~m=2,3,
\end{cases}
\end{equation*}
where $X_{g,d}$ is given by (\ref{X-gd}).

Finally, Lemma \ref{typeI-nf2} can be deduced from the following equality
\[B_{g,d}(\zeta_{g,d,m,l_1,l_2}^2)=cN_{g,d}=(C^{1,2}+C^2)N_{g,d}.\]
\end{proof}

The next lemma computes one particular invariant $B_{3,d}(\zeta_{3,d})$ appeared in (\ref{coef-g3}).
\begin{lemma}\label{typeI-nf3}
\[
B_{3,d}(\zeta_{3,d})=(1875d^4+3875d^3-1950d^2-2760d+1344)N_{3,d}.
\]
\end{lemma}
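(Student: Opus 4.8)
The plan is to follow the scheme of Lemmas \ref{typeI-nf1} and \ref{typeI-nf2}, now with three absolute marked points and $g=3$. Writing $\zeta_{3,d}=\{(5d-4,0),(1,0),(1,0)\}$, so that $(l_1,l_2,l_3)=(5d-4,1,1)$ and $\Gamma=(3,d\alpha_{\infty},\emptyset,3)$, we have
\[
B_{3,d}(\zeta_{3,d})=\int_{[\overline{\mathcal{M}}_{\Gamma}(Y,D_0)]^{vir}}\prod_{i=1}^3\prod_{k_i=0}^{l_i}\big(k_i\psi_i+ev_i^*([D_{\infty}])\big).
\]
Let $\pi:\overline{\mathcal{M}}_{\Gamma}(Y,D_0)\to\overline{\mathcal{M}}_{3,0}(Q,d)$ project the curves down to $Q$ and forget all marked points, and set $\omega^{1}=\prod_{i=1}^3\prod_{k_i=0}^{l_i}\big(k_i\psi_i+ev_i^*([D_{\infty}])\big)$. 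Since $\zeta_{3,d}\in S_{3,d}$, one has $\dim\big(\omega^{1}\cap[\overline{\mathcal{M}}_{\Gamma}(Y,D_0)]^{vir}\big)=0=\dim[\overline{\mathcal{M}}_{3,0}(Q,d)]^{vir}$, so Lemma \ref{vpfp-2} gives $\pi_*\big(\omega^{1}\cap[\ldots]^{vir}\big)=c\,[\overline{\mathcal{M}}_{3,0}(Q,d)]^{vir}$ and hence $B_{3,d}(\zeta_{3,d})=cN_{3,d}$. Everything reduces to computing the scalar $c$.

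To do so I would reuse the difference trick of Lemma \ref{typeI-nf2}. Introduce
\[
\omega^{2}=\prod_{k_1=0}^{5d-3}\big(k_1\psi_1+ev_1^*([D_{\infty}])\big)\prod_{k_2=0}^{1}\big(k_2\psi_2+ev_2^*([D_{\infty}])\big)\,ev_3^*([D_{\infty}]),
\]
which has the same codimension as $\omega^{1}$ (the third factor is collapsed to $ev_3^*([D_{\infty}])$ while the first range is enlarged by one), and split $c$ into the contributions of $\omega^{1}-\omega^{2}$ and of $\omega^{2}$. The term coming from $\omega^{2}$ carries a bare $ev_3^*([D_{\infty}])$, so by the divisor equation together with the cotangent line comparison (exactly as in the part-2 computation of Lemma \ref{typeI-nf2}) it collapses to the two-pointed relative invariant $B_{3,d}(\{(5d-3,0),(1,0)\})$ already covered by Lemma \ref{typeI-nf2}, the boundary corrections being handled by Lemma \ref{typeI-nf1}; specializing $X_{g,d}$, $Y_{g,d}$ and $Z_{g,d}$ at $g=3$ renders this contribution explicit.

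The remaining term $\pi_*\big((\omega^{1}-\omega^{2})\cap[\ldots]^{vir}\big)$ I would evaluate with the relative virtual localization formula (\ref{lc-0}), as in cases (I) and (II) of Lemmas \ref{typeI-nf1} and \ref{typeI-nf2}. Only two families of localization graphs survive. In case (I) the distinguished vertex $v_{\infty}\in V_{\infty}$ has $(g,\beta)=(3,d\alpha_{\infty})$, so $\overline{\mathcal{M}}_{G^l_0}\simeq\overline{\mathcal{M}}_{3,3}(Q,d)$, the map $\pi\circ\tau_{G^l_0}$ forgets all three points, and the $t^0$-coefficient of $\tau_{G^l_0}^*(\omega^{1}_T-\omega^{2}_T)/e_T(N_{G^l_0}^{vir})$ is a codimension-three class on $\overline{\mathcal{M}}_{3,3}(Q,d)$ written through the $\psi_i$, $ev_i^*\big(c_1(N_{Q/\mathbb{P}^4})\big)$ and the Chern characters $\text{ch}_i(N_{3,3,d})$; pushing it forward with the string, dilaton and divisor equations and annihilating the twisted pieces by Lemma \ref{TGW} yields its value. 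In case (II) the distinguished vertex lies in $V_0$, and the same reasoning as before (the localized integrand expands in strictly negative powers of $t$) shows the contribution vanishes.

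I would then add the two pieces, substitute $l_1+l_2+l_3=5d-2$, and simplify. The main obstacle is purely computational: extracting the $t^0$-part with three marked points forces expanding a product of $(l_1+1)+(l_2+1)+(l_3+1)$ factors and evaluating the resulting genus-three descendent integrals on $\overline{\mathcal{M}}_{3,3}(Q,d)$ after the forgetful pushforward, and the ensuing algebra must collapse exactly to $1875d^4+3875d^3-1950d^2-2760d+1344$. No geometric input beyond Lemmas \ref{vpfp-2}, \ref{TGW}, \ref{typeI-nf1}, \ref{typeI-nf2} and the localization formula (\ref{lc-0}) is required; the difficulty lies in managing the many terms and the correct $g=3$ specializations.
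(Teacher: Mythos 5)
Your reduction to a scalar $c$ via Lemma \ref{vpfp-2} and the identification of the two families of localization graphs match the paper, and your treatment of case (I) (push forward over $\overline{\mathcal{M}}_{3,3}(Q,d)$ using string, dilaton, divisor equations and Lemma \ref{TGW}) is exactly what the paper does. The genuine gap is your claim that case (II) vanishes for $\omega^{1}-\omega^{2}$. The mechanism that kills case (II) in Lemma \ref{typeI-nf2} is that when both marked points sit on the \emph{same} contracted vertex of $V_\infty$ one has $\tau_{G^l_0}^*(\omega^1_T)=\tau_{G^l_0}^*(\omega^2_T)$ exactly, and when they sit on \emph{different} vertices the degree bounds $d_i\geq l_i+1$ force only negative powers of $t$. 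With three marked points neither mechanism applies to your $\omega^2$: e.g.\ when $x_1,x_2$ lie on a vertex $v_1$ and $x_3$ on a different vertex $v_2$ (attached by an edge of degree $d_2\geq 2$), one gets $\tau^*(\omega^1_T)=(t+c^{(1)})^{5d-1}(t+c^{(2)})^2(1-1/d_2)$ versus $\tau^*(\omega^2_T)=(t+c^{(1)})^{5d}(t+c^{(2)})$, which differ, and a $t$-degree count against $e_T(N^{vir}_{G^l_0})$ shows a surviving $t^0$ part as soon as there are at least three edges. These case (II) contributions are not a small correction: in the paper the case (I) contribution is a degree-6 polynomial in $d$, and it is precisely the case (II) subcases (two points together, the symmetric one, the other pairing, and all three together) whose $d^6$ and $d^5$ coefficients cancel those of case (I) to leave the degree-4 answer. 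So an argument that discards case (II) cannot produce the stated formula.

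The paper's actual route keeps $\omega$ intact, enumerates the five assignments of the three marked points to vertices of $V_\infty$, and evaluates the nontrivial ones by an indirect device (Lemma \ref{keytrick}): it computes an auxiliary invariant with a distinguished insertion $D\tau\prod_{k\neq D}(k\tau+[D_{\infty}])$ in two ways — once by localization, which isolates the sum $T_D$ over graphs with a fixed edge degree $D$, and once by re-expanding the insertion in terms of $H^m\prod(k\tau+[D_\infty])$ and applying Lemmas \ref{typeI-nf1} and \ref{typeI-nf2} — then sums over $D$ using the combinatorial identities of Appendix C. Your proposal contains no substitute for this step, which is where the real content of the lemma lies.
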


\begin{proof}
We recall that
\[B_{3,d}(\zeta_{3,d})=\int_{[\overline{\mathcal{M}}_{\Gamma}(Y,D_0)]^{vir}}\omega\]
where $\Gamma=(g,d\alpha_{\infty},\emptyset,3)$ and $\omega$ is the cohomology class
\[\prod_{k_1=0}^{5d-4}\big(k_1\psi_1+ev_1^*([D_{\infty}])\big)\prod_{k_2=0}^1\big(k_2\psi_2+ev_2^*([D_{\infty}])\big)\prod_{k_3=0}^1\big(k_3\psi_3+ev_3^*([D_{\infty}])\big).\]

As before, we set
\[\pi:\overline{\mathcal{M}}_{\Gamma}(Y,D_0)\rightarrow \overline{\mathcal{M}}_{3,0}(Q,d)\]
to be the morphism which projects the curves in $Y$ down to the base and forget all the marked points. By the virtual pushforward property of Lemma \ref{vpfp-2}, we know that
\begin{equation}\label{new-3}
\pi_*\Big(\omega\cap [\overline{\mathcal{M}}_{\Gamma}(Y,D_0)]^{vir}\Big)=c[\overline{\mathcal{M}}_{3,0}(Q,d)]^{vir}
\end{equation}
where $c$ is some scalar to be determined. So
\[B_{3,d}(\zeta_{3,d})=cN_{3,d}.\]

We will use the relative virtual localization formula to compute $c$.

Similar to the proof of Lemma \ref{typeI-nf1} case (C), $\omega$ has a natural equivariant lift $\omega_T$. Now by the relative virtual localization formula (\ref{lc-0}), the L.H.S of (\ref{new-3}) equals to the non-equivariant limit of
\begin{equation}\label{lc-3pt}
\sum_{G^l_0}\frac{1}{|Aut(G^l_0)|}(\pi\circ\tau_{G^l_0})_*\Biggr(\frac{\tau_{G^l_0}^*(\omega_T)\cap[\overline{\mathcal{M}}_{G^l_0}]^{vir}}{e_T(N_{G^l_0}^{vir})}\Biggr).
\end{equation}

Similar to the proof of Lemma \ref{typeI-nf1} case (C), we only need to consider the following two types of graphs $G^l_0$.

(I) Firstly, we suppose that there exists one vertex $v_{\infty}\in V_{\infty}$ of $G^l_0$ such that $g(v_{\infty})=3$ and $\beta(v_{\infty})=d\alpha_{\infty}$. The graph $G^l_0$ is then uniquely determined. The contribution of such a graph to the scalar $c$ can be determined by string, dilaton, divisor equations and Lemma \ref{TGW}. The computation is similar to the proof of Lemma \ref{typeI-nf2} case (I). So we omit it here. The result is that the contribution of the graph in case (I) to $c$ is
\[\frac{78125d^6}{24}-\frac{3125d^5}{8}+\frac{143125d^4}{24}-\frac{61875d^3}{8}+\frac{33125d^2}{4}-7355d+2280.\]

(II) Next, we suppose that there exists one vertex $v_0\in V_0$ satisfying $g(v_0)=3$ and $p_*\big(\beta(v_0)\big)=d\alpha$.

As before, we can deduce that $v_0$ is the only vertex in $V_0$, and each component mapped to $D_{\infty}$ is contractible with genus $0$ whose corresponding vertex connects to $v_0$ via only one edge. The absolute marked points must be assigned to vertices in $V_{\infty}$.

Now we need to discuss according to the assignment of the three absolute marked points.

(a) We suppose that the three absolute marked points are assigned to three different vertices. Then similar to the discussion in Lemma \ref{typeI-nf2} case (a), we can show that the expansion of the corresponding term in (\ref{lc-3pt}) only contains the negative powers of $t$.

So the contributions of such graphs to the scalar $c$ are all $0$.

(b) We suppose that the first two absolute marked points are assigned to the same vertex while the third marked point is assigned to a different vertex. Then the total contribution of those graphs will be computed by Lemma \ref{keytrick} in the below. The result is
\[\frac{109375d^6}{24}-\frac{315625d^5}{24}+\frac{246875d^4}{24}+\frac{125d^3}{24}-\frac{12225d^2}{4}+\frac{3820d}{3}-188.\]

(c) We suppose that the first and third marked points are assigned to the same vertex while the second marked point is assigned to a different vertex. Then the total contribution of those graphs will be the same as (b) since the insertions for the second and the third marked points are the same.

(d) We suppose that the last two marked points are assigned to the same vertex while the first marked point is assigned to a different vertex.
Then the total contribution can be computed by using the same method as in the proof of Lemma \ref{keytrick}. The result is
\[-375d^3-975d^2+1605d-528.\]

(e) We suppose that all the three marked points are assigned to the same vertex such that only one edge $e$ is incident to it. Then the total contribution of such graphs can be computed as follows.

Firstly, we fix degree $d(e)=D$. The total contribution of those graphs with fixed degree $d(e)=D$ can be determined by computing
\begin{equation}\label{test}
\Big\langle\Big |\Big\{D\tau\prod_{k=0\atop k\neq D}^{5d-2}\big(k\tau+[D_{\infty}]\big)\Big\}\Big\rangle_{3,d\alpha_{\infty}}
\end{equation}
in the following two different ways.

One by Lemma \ref{typeI-nf1}, the other by using the relative virtual localization formula (\ref{lc-0}). The two ways to compute (\ref{test}) give an identity which can be used to determine the total contribution of those graphs with fixed degree $d(e)=D$. The procedure is almost the same as in the proof of Lemma \ref{keytrick}. So we omit it here.

The result is that the total contribution of those graphs with fixed degree $d(e)=D$ equals to
\[
\begin{aligned}
\frac{(-1)^{5d-1-D}D^{5d-1}}{(5d-2-D)!D!}\Big\{\frac{-625d^4+1125d^3-750d^2+160d}{2D}+~~~~~~~~\\
\frac{1875d^4-3500d^3+2475d^2-680d+64}{2}+\\
(25d^2+35d-28)D-4D^2\Big\}.
\end{aligned}
\]
Summing over $D$, we have
\[
-\frac{296875d^6}{24}+\frac{640625d^5}{24}-\frac{591875d^4}{24}+\frac{287375d^3}{24}-\frac{12575d^2}{4}+\frac{1330d}{3}-32.
\]
The final step has used the combinatorial identities for
\[\sum_{D=1}^{m}\frac{(-1)^{m-D}D^{m+k-1}}{(m-D)!D!}\]
which are given in the Appendix C, Lemma \ref{comb-idt}.

Summing over all of the contributions, we have
\[c=1875d^4+3875d^3-1950d^2-2760d+1344.\]
The lemma directly follows.
\end{proof}

\begin{lemma}\label{keytrick}
The total contribution of those graphs discussed in Lemma \ref{typeI-nf3} case (b) to the scalar $c$ is
\[\frac{109375d^6}{24}-\frac{315625d^5}{24}+\frac{246875d^4}{24}+\frac{125d^3}{24}-\frac{12225d^2}{4}+\frac{3820d}{3}-188.\]
\end{lemma}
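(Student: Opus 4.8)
The plan is to adapt the localization-and-comparison argument of Lemma \ref{typeI-nf3} case (e) to the configuration in which two of the three marked points share a $V_\infty$ vertex. First I would identify the relevant graphs $G_0^l$. Exactly as in case (II) of Lemma \ref{typeI-nf3}, a single vertex $v_0\in V_0$ carries the genus and degree, every $V_\infty$ vertex meets $v_0$ in one edge, and all absolute marked points lie over $D_\infty$. In case (b) the points $x_1,x_2$ lie on one vertex $v_1$ and $x_3$ on a second vertex $v_2$. Since $v_1$ then has three special points, it is rigid and $\tau_{G_0^l}^*(\psi_1)=\tau_{G_0^l}^*(\psi_2)=0$, so the two insertions there collapse to $\big(t+(ev_1^{E})^*(c_1(N_{Q/\mathbb{P}^4}))\big)^{5d-1}$; the vertex $v_2$ is unstable and contracts, and with $\psi_3=-\big(t+(ev_2^{E})^*(c_1(N_{Q/\mathbb{P}^4}))\big)/d(e_2)$ the insertion at $x_3$ becomes $\big(1-1/d(e_2)\big)\big(t+(ev_2^{E})^*(c_1(N_{Q/\mathbb{P}^4}))\big)^2$.

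With the graphs in hand, I would write the case (b) part of the scalar $c$ as a sum over the edge degrees, constrained by $\sum_r d(e_r)=5d$, of the product of the vertex factors above, the normal-bundle factors from (\ref{rvlf})--(\ref{rvlf-ii}), and the rubber integral over $v_0$. Rather than evaluating the rubber integral directly, I would import the device used for (\ref{test}): form an auxiliary type I invariant of $(Y,D_0)$ in which a factor $D\tau$ replaces one $(k\tau+[D_\infty])$, so that---by the same $\psi$-collapse that controls Lemma \ref{typeI-nf1} case (II)---localization forces a chosen edge degree to equal $D$, while all other degrees contribute only negative powers of $t$ and drop out after the non-equivariant $t^0$-limit. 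I would then compute this auxiliary invariant in two ways: once in closed form, by expanding $D\tau=(D\tau+[D_\infty])-[D_\infty]$ and applying Lemmas \ref{typeI-nf1} and \ref{typeI-nf2} together with the divisor equation to write it as an explicit multiple of $N_{3,d}$; and once by (\ref{lc-0}), isolating the graphs with the fixed edge degree. Equating the two determines the fixed-$D$ part of the case (b) contribution.

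To finish I would sum over $D$. After cancelling the edge factors this collapses to the sums $\sum_D \frac{(-1)^{m-D}D^{m+k-1}}{(m-D)!\,D!}$ evaluated in Appendix C, Lemma \ref{comb-idt}; inserting their closed forms and collecting powers of $d$ yields the asserted degree-six polynomial.

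The hard part will be the design of the auxiliary invariant. It must be arranged so that its localization reproduces precisely the case (b) configuration---the long and one short insertion on the rigid vertex $v_1$, and $x_3$ on the contracted vertex $v_2$---rather than the adjacent sub-cases (a), (c), (d), (e); keeping straight which vertex is rigid and which is contracted, the automorphism weights $1/|Aut(G_0^l)|$, and the compatibility of the $D\tau$-substitution with the short insertion at $x_3$ is where the real care is needed. Once the auxiliary is correctly chosen, the two-way evaluation and the summation via Lemma \ref{comb-idt} are mechanical, if lengthy.
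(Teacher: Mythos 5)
Your proposal follows essentially the same route as the paper: the same graph analysis (rigid vertex $v_1$ killing $\psi_1,\psi_2$, contracted vertex $v_2$ giving $\psi_3=-(t+\widehat{ev}_2^*(c_1(N_{Q/\mathbb{P}^4})))/d_2$), the same auxiliary two-point invariant with a $D\tau$ insertion evaluated once by localization and once via Lemmas \ref{typeI-nf1} and \ref{typeI-nf2}, and the same summation over $D$ via Lemma \ref{comb-idt}; the auxiliary invariant you describe is exactly the paper's (\ref{eva-test}). The only point you leave implicit is that the localization of the auxiliary invariant also receives a simple-fixed-locus contribution (the vertex over $D_\infty$ carrying all the genus and degree), which must be computed and subtracted before solving for $T_D$.
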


\begin{proof}
We recall that those graphs $G_0^l$ discussed in Lemma \ref{typeI-nf3} case (b) can be described as follows. $v_0$ is the only vertex in $V_0$ satisfying $g(v_0)=3$ and $p_*(\beta(v_0))=d\alpha$. Each component mapped to $D_{\infty}$ is contractible with genus $0$ whose corresponding vertex $v_i$ connects to $v_0$ via only one edge $e_i$. The first two absolute marked points are assigned to the same vertex $v_1$ in $V_{\infty}$, and the third point is assigned to a different vertex $v_2$ in $V_{\infty}$.

We assume that there are $s$ edges $e_1,\ldots,e_s$ with degrees $d_1,\ldots,d_s$ respectively. So there are also $s$ vertices in $V_{\infty}$. The graph $G_0^l$ has been fixed now. Similar to the proof of Lemma \ref{typeI-nf1} case (C) subcase (II), we may also deduce that
\[\sum_{i=1}^sd_i=5d.\]

The corresponding contribution to the scalar $c$ can be derived from the $t^0$-part of
\begin{equation}\label{compa-1}
\frac{1}{|Aut(G^l_0)|}(\pi\circ\tau_{G^l_0})_*\Biggr(\frac{\tau_{G^l_0}^*(\omega_T)\cap[\overline{\mathcal{M}}_{G^l_0}]^{vir}}{e_T(N_{G^l_0}^{vir})}\Biggr).
\end{equation}
Here, we recall that
\[\overline{\mathcal{M}}_{G^l_0}=\overline{\mathcal{M}}_{\infty}\times_{Q^s}\overline{\mathcal{M}}_{E}\times_{Q^s} \overline{\mathcal{M}}_{G^r_0}^{\bullet,\sim}.\]
The rubber space in this case can be identified by
\[\overline{\mathcal{M}}_{G^r_0}^{\bullet,\sim}=\overline{\mathcal{M}}_{\Gamma_b}(Y,D_0\cup D_{\infty})^{\sim}\]
with data $\Gamma_b=(3,d\alpha_{\infty},\emptyset,\{d_1,d_2,\ldots,d_s\},0)$. We may abbreviate it as $\overline{\mathcal{M}}_{\Gamma_b}^{\sim}$.

Recall that
\[\overline{\mathcal{M}}_{\infty}=\prod_{v\in V_{\infty}}\overline{\mathcal{M}}_{0,val(v)}(Q,0)=\prod_{i=1}^s\overline{\mathcal{M}}_{0,val(v_i)}(Q,0).\]
Here if $val(v)\leq 2$, then $\overline{\mathcal{M}}_{0,val(v)}(Q,0)$ should be treated as $Q$.

Since $\overline{M}_{0,3}(Q,0)=Q$ and $\forall r$, $val(v_r)\leq 3$, we may deduce that all the $\overline{M}_{0,val(v_i)}(Q,0)$ can be identified as $Q$. So
\[\overline{\mathcal{M}}_{G^l_0}=\overline{\mathcal{M}}_{E}\times_{Q^s}\overline{\mathcal{M}}_{\Gamma_b}^{\sim}.\]
By Lemma \ref{rstack}, we know that
\[\overline{\mathcal{M}}_{E}=\prod_{i=1}^s\sqrt[d_i]{N_{Q/\mathbb{P}^4}/Q}\]
which is a gerbe over $Q^s$ banded by the group $\prod_{i=1}^s\boldsymbol{\mu}_{d(e_i)}$. The coarse moduli space of $\overline{\mathcal{M}}_{E}$ is $Q^s$.

So it is easy to see that $\overline{\mathcal{M}}_{G^l_0}$ and $\overline{\mathcal{M}}_{\Gamma_b}^{\sim}$ share the same coarse moduli space. As cycles in the same coarse moduli space, we have
\[[\overline{\mathcal{M}}_{G^l_0}]^{vir}=\frac{1}{\prod_{i=1}^sd_i}
[\overline{\mathcal{M}}_{\Gamma_b}^{\sim}]^{vir}.\]
Recall that
\begin{equation*}
ev^{E}=(ev_1^{E},ev_2^{E},\ldots,ev_{s}^{E}):\overline{\mathcal{M}}_{G^l_0}\longrightarrow Q^{s}
\end{equation*}
is given by (\ref{rel-eva}) and
\[\widehat{ev}=(\widehat{ev}_1,\widehat{ev}_2,\ldots,\widehat{ev}_s):\overline{\mathcal{M}}_{\Gamma_b}^{\sim}\longrightarrow Q^{s}\]
is the evaluation map given by those relative marked points which mapped to $D_{\infty}$. As maps from the same coarse moduli space, they can be naturally identified. We may also treat $\pi\circ\tau_{G^l_0}$ as a map from the coarse moduli space of $\overline{\mathcal{M}}_{\Gamma_b}^{\sim}$ to $\overline{\mathcal{M}}_{3,0}(Q,d)$.

Recall that $\omega_T$ is the equivariant lift of
\[\prod_{k_1=0}^{5d-4}\big(k_1\psi_1+ev_1^*([D_{\infty}])\big)\prod_{k_2=0}^1\big(k_2\psi_2+ev_2^*([D_{\infty}])\big)\prod_{k_3=0}^1\big(k_3\psi_3+ev_3^*([D_{\infty}])\big).\]
The restriction of $\omega_T$ i.e. $\tau_{G^l_0}^*(\omega_T)$ can be computed as follows.

Since the first two marked points are assigned to the vertex $v_1$, whose corresponding component is a contractible genus $0$ curve with three special points on it, the restrictions of $\psi_1$, $\psi_2$ all become zero. The third marked point is assigned to vertex $v_2$ which connects to $v_0$ via $e_2$.

By stability condition, the corresponding component of $v_2$ degenerates into a point. Now the $d_2$th tensor power of the cotangent line bundle $L_3$ can be identified as
\[(ev_2^{E})^*(N_{D_{\infty}/Y}^*)\simeq (ev_2^{E})^*(N_{Q/\mathbb{P}^4}^*).\]
So the restriction of $\psi_3$ becomes
\[-\frac{t+(ev_2^{E})^*\big(c_1(N_{Q/\mathbb{P}^4})\big)}{d_2}=-\frac{t+\widehat{ev}_2^*\big(c_1(N_{Q/\mathbb{P}^4})\big)}{d_2}.\]

It is also easy to see that
\[\tau_{G^l_0}^*\big(ev_1^*([D_{\infty}])\big)=\tau_{G^l_0}^*\big(ev_2^*([D_{\infty}])\big)=t+\widehat{ev}_1^*\big(c_1(N_{Q/\mathbb{P}^4})\big)\]
and
\[\tau_{G^l_0}^*\big(ev_3^*([D_{\infty}])\big)=t+\widehat{ev}_2^*\big(c_1(N_{Q/\mathbb{P}^4})\big).\]
So we have
\[\tau_{G^l_0}^*(\omega_T)=\bigr(\widehat{ev}_1^*\big(c_1(N_{Q/\mathbb{P}^4})\big)+t\bigr)^{5d-1}\prod_{k=0}^1\Big(1-\frac{k}{d_2}\Big)\bigr(\widehat{ev}_2^*\big(c_1(N_{Q/\mathbb{P}^4})\big)+t\bigr).\]

The inverse of $e_T(N_{G^l_0}^{vir})$ can be written as
\[\frac{\prod_{r=1}^s d_r }{-t-{\psi}_{\infty}}\prod_{r=1}^s\frac{1}{N(v_r)}\prod_{r=1}^s\frac{t+\widehat{ev}_r^*\big(c_1(N_{Q/\mathbb{P}^4})\big)}{\bigr(t+\widehat{ev}_r^*\big(c_1(N_{Q/\mathbb{P}^4})\big)\bigr)^{d_r}\frac{d_r!}{d_r^{d_r}}}\]
by (\ref{rvlf}).

Since $2g(v_1)-2+val(v_1)>0$, we have
\[\frac{1}{N(v_1)}=\frac{1}{e_{T}(H^0/H^1(f_{v_1}^*N_{Q/\mathbb{P}^4}))}\frac{1}{\frac{t+\widehat{ev}_1^*\big(c_1(N_{Q/\mathbb{P}^4})\big)}{d_1}-\psi_{e_1}}\]
by (\ref{rvlf-main}). In this case, $\psi_{e_1}$ is the $\psi$-class of $\overline{\mathcal{M}}_{0,3}(Q,0)$ associated to the marked point coming from edge $e_1$. So $\psi_{e_1}=0$.

As for $e_{T}(H^0/H^1(f_{v_1}^*N_{Q/\mathbb{P}^4}))$, it is easy to compute that
\[e_{T}(H^0/H^1(f_{v_1}^*N_{Q/\mathbb{P}^4}))=t+\widehat{ev}_1^*\big(c_1(N_{Q/\mathbb{P}^4})\big).\]
So we may conclude that
\[\frac{1}{N(v_1)}=\frac{d_1}{\bigr(\widehat{ev}_1^*\big(c_1(N_{Q/\mathbb{P}^4})\big)+t\bigr)^2}.\]

By (\ref{rvlf-i}) and (\ref{rvlf-ii}), we have
\[\frac{1}{N(v_2)}=\frac{1}{\widehat{ev}_2^*\big(c_1(N_{Q/\mathbb{P}^4})\big)+t},~~~\frac{1}{N(v_r)}=\frac{1}{d_r},~r\neq 1,2.\]

As for the automorphism group, we have $|Aut(G_0^l)|=|G|$, where $G$ is a group of permutation symmetries of the set $\{d_3,d_4,\ldots,d_s\}$.

From the discussion above, we may conclude that (\ref{compa-1}) equals to
\begin{equation}\label{compa-equclass1}
(\pi\circ\tau_{G^l_0})_*\Biggr(\frac{\bigr(\widehat{ev}_1^*\big(c_1(N_{Q/\mathbb{P}^4})\big)+t\bigr)^{5d-d_1-2}\cap [\overline{\mathcal{M}}_{\Gamma_b}^{\sim}]^{vir}}{(-t-\psi_{\infty})\frac{d_1!}{d_1^{d_1+1}}N_{d_2,d_3,\ldots,d_s}}\Biggr)
\end{equation}
where
\begin{equation}\label{intem}
\frac{1}{N_{d_2,d_3,\ldots,d_s}}=\frac{(d_2-1)\bigr(t+\widehat{ev}_2^*\big(c_1(N_{Q/\mathbb{P}^4})\big)\bigr)}{|G|\prod_{r=2}^s\bigr(t+\widehat{ev}_r^*\big(c_1(N_{Q/\mathbb{P}^4})\big)\bigr)^{d_r-1}\frac{d_r!}{d_r^{d_r-1}}}.
\end{equation}
Since the dimension of the $t^0$-part of
\[\frac{\bigr(\widehat{ev}_1^*\big(c_1(N_{Q/\mathbb{P}^4})\big)+t\bigr)^{5d-d_1-2}\cap [\overline{\mathcal{M}}_{\Gamma_b}^{\sim}]^{vir}}{(-t-\psi_{\infty})\frac{d_1!}{d_1^{d_1+1}}N_{d_2,d_3,\ldots,d_s}}\]
is zero, by the virtual pushforward of Lemma \ref{vpfp-2}, we may deduce that the $t^0$-part of (\ref{compa-equclass1}) can be written as
\[f(d_1,\ldots,d_s)[\overline{\mathcal{M}}_{3,0}(Q,d)]^{vir}.\]

The total contribution we want to compute in Lemma \ref{typeI-nf3} case (b) is
\[T=\sum_{s\geq 2}\sum_{\stackrel{d_1,\ldots,d_s>0}{\sum_r d_r=5d}}f(d_1,\ldots,d_s).\]

In order to compute $T$, we fix $d_1=D$ and try to compute
\[T_D=\sum_{s\geq 2}\sum_{\stackrel{d_2,\ldots,d_s>0}{\sum_{r=2}^s d_r=5d-D}}f(D,\ldots,d_s)\]
at first. Then $T$ is a sum of different $T_D$.

If $d_2=1$, then it is easy to see from (\ref{intem}) that (\ref{compa-equclass1}) equals to zero. So $f=0$. If $d_2>1$ and $D>5d-3$, then it is easy to compute that the expansion of (\ref{compa-equclass1}) according to $t$ only contains negative powers of $t$. Still we have $f=0$. So we may assume that $D\leq 5d-3$.

Inspired by \cite{Ga1}, we try to determining $T_D$ by computing the following relative Gromov-Witten invariant
\begin{equation}\label{eva-test}
\Big\langle\Big|\Big\{D\tau\prod_{k_1=0\atop k_1\neq D}^{5d-3}\big(k_1\tau+[D_{\infty}]\big)\Big\}\Big\{\prod_{k_2=0}^{1}\big(k_2\tau+[D_{\infty}]\big)\Big\}\Big\rangle_{3,d\alpha_{\infty}}
\end{equation}
in the following two different ways.

(A) The first way is to use the relative virtual localization formula (\ref{lc-0}). Let
\[\pi':\overline{\mathcal{M}}_{\Gamma'}(Y,D_0)\longrightarrow \overline{\mathcal{M}}_{3,0}(Q,d)\]
be the morphism which projects the curves in $Y$ down to the base and forget all the marked points, where the data $\Gamma'=(3,d\alpha_{\infty},\emptyset,2)$. By the virtual property of Lemma \ref{vpfp-2}, we know that
\[(\pi')_*\Big(\omega'\cap [\overline{\mathcal{M}}_{\Gamma}(Y,D_0)]^{vir}\Big)=X[\overline{\mathcal{M}}_{3,0}(Q,d)]^{vir}\]
where $X$ is some scalar to be determined and
\[\omega'=D\psi_1\prod_{k_1=0\atop k_1\neq D}^{5d-3}\big(k_1\psi_1+ev_1^*([D_{\infty}])\big)\prod_{k_2=0}^{1}\big(k_2\psi_2+ev_2^*([D_{\infty}])\big).\]

So (\ref{eva-test}) equals to $XN_{3,d}$.

Similar to the proof of Lemma \ref{typeI-nf1} case (C), $\omega'$ has a natural equivariant lift $\omega'_T$. By the relative virtual localization formula (\ref{lc-0}), we know that $X$ can be derived from the non-equivariant limit of
\begin{equation}\label{eva-lcscd}
\sum_{G^{l'}_0}\frac{1}{|Aut(G^{l'}_0)|}(\pi'\circ\tau_{G^{l'}_0})_*\Biggr(\frac{\tau_{G^{l'}_0}^*(\omega'_T)\cap[\overline{\mathcal{M}}_{G^{l'}_0}]^{vir}_T}{e_T(N_{G^{l'}_0}^{vir})}\Biggr).
\end{equation}

As before, we only need to consider the following two types of graphs $G^{l'}_0$.

(I) Firstly, we suppose that there exists one vertex $v_{\infty}\in V_{\infty}$ of $G^{l'}_0$ such that $g(v_{\infty})=3$ and $\beta(v_{\infty})=d\alpha_{\infty}$. The graph is then uniquely determined. The contribution of such a graph $G^{l'}_0$ to the scalar $X$ can be similarly determined as in the proof of Lemma \ref{typeI-nf2} case (I). So we omit the details of computation. The result is
\[D(75d^2-15d+32-4D).\]

(II) Secondly, we suppose that there exists one vertex $v_0\in V_0$ satisfying $g(v_0)=3$ and $p_*\big(\beta(v_0)\big)=d\alpha$.

As before, we can deduce that $v'_0$ is the only vertex of $V_0$, and each component mapped to $D_{\infty}$ is contractible with genus $0$ whose corresponding vertex $v'_i$ connects to $v'_0$ via only one edge $e'_i$. The two absolute marked points must be assigned to vertices in $V_{\infty}$.

Next, we need to discuss according to the assignment of the two absolute marked points.

(a) If the two absolute marked points are assigned to the same vertex $v'_1$, then the restriction of $\psi_1$ becomes a $\psi$-class of
\[\overline{\mathcal{M}}_{0,val(v'_1)}(Q,0)=\overline{\mathcal{M}}_{0,3}(Q,0)\]
which vanishes obviously.

Now since $\tau_{G^{l'}_0}^*(\omega'_T)$ contains one factor $\tau_{G^{l'}_0}^*(D\psi_1)$, it vanishes as well.

So we may conclude that the contributions of graphs in case (a) are all $0$.

(b) We assume that the two absolute marked points are assigned to two different vertices $v'_1,v'_2$. We further assume that there are $s$ edges $e'_1,\ldots,e'_s$ with degrees $d_1,\ldots,d_s$ respectively. The graph $G_0^{l'}$ is fixed now. The corresponding contribution can be derived from the $t^0$-part of
\begin{equation}\label{compa-2}
\sum_{G^{l'}_0}\frac{1}{|Aut(G^{l'}_0)|}(\pi'\circ\tau_{G^{l'}_0})_*\Biggr(\frac{\tau_{G^{l'}_0}^*(\omega'_T)\cap[\overline{\mathcal{M}}_{G^{l'}_0}]^{vir}_T}{e_T(N_{G^{l'}_0}^{vir})}\Biggr).
\end{equation}

Similar to the discussion of (\ref{compa-1}), we can deduce that (\ref{compa-2}) equals to
\begin{equation}\label{compa-equclass2}
S\times(\pi'\circ\tau_{G^{l'}_0})_*\Biggr(\frac{\bigr(\widehat{ev}_1^*\big(c_1(N_{Q/\mathbb{P}^4})\big)+t\bigr)^{5d-d_1-2}\cap [\overline{\mathcal{M}}_{\Gamma_b}^{\sim}]^{vir}}{(-t-\psi_{\infty})\frac{d_1!}{d_1^{d_1}}N_{d_2,d_3,\ldots,d_s}}\Biggr)
\end{equation}
where
\[S=-\frac{D}{d_1}\prod_{\stackrel{k_1=0}{k_1\neq D}}^{5d-3}\Big(1-\frac{k_1}{d_1}\Big)\]
and $N_{d_2,d_3,\ldots,d_s}$ is the same as (\ref{intem}).

By the virtual property of Lemma \ref{vpfp-2}, we know that $t^0$-part of (\ref{compa-equclass2}) can be written as
\[h(d_1,\ldots,d_s)[\overline{\mathcal{M}}_{3,0}(Q,d)]^{vir}.\]

By comparing (\ref{compa-equclass1}) with (\ref{compa-equclass2}), it is easy to see that
\[h(d_1,\ldots,d_s)=-\frac{D}{d_1^2}\prod_{\stackrel{k_1=0}{k_1\neq D}}^{5d-3}\Big(1-\frac{k_1}{d_1}\Big)f(d_1,\ldots,d_s).\]

Using the above equation, it is easy to check that if $d_1\neq D$, then $h=0$. If $d_1=D$, then we have
\[h(D,d_2,\ldots,d_s)=\frac{(5d-3-D)!D!}{D^{5d-2}}(-1)^{5d-2-D}f(D,d_2,\ldots,d_s).\]

So the total contribution of those graphs in case (b) to the scalar $X$ is
\[\sum_{s\geq 2}\sum_{\stackrel{d_2,\ldots,d_s>0}{\sum_{r=2}^s d_r=5d-D}}h(D,d_2,\ldots,d_s)=\frac{(5d-3-D)!D!}{D^{5d-2}}(-1)^{5d-2-D}T_D.\]

From the discuss in both case (I) and case (II), we may conclude that
\begin{equation}\label{new-4}
X=D(75d^2-15d+32-4D)+\frac{(5d-3-D)!D!}{D^{5d-2}}(-1)^{5d-2-D}T_D.
\end{equation}

(B) There is another way to compute (\ref{eva-test}). We firstly express
\[D\tau\prod_{k_1=0\atop k_1\neq D}^{5d-3}\big(k_1\tau+[D_{\infty}]\big)\]
in terms of
\[H^m\prod_{k_1=0}^{5d-3-m}\big(k_1\tau+[D_{\infty}]\big)\]
where $0\leq m\leq 3$. Since
\[D\tau\prod_{k_1=0\atop k_1\neq D}^{5d-3}\big(k_1\tau+[D_{\infty}]\big)=\prod_{k_1=0}^{5d-3}\big(k_1\tau+[D_{\infty}]\big)-[D_{\infty}]\prod_{k_1=0\atop k_1\neq D}^{5d-3}\big(k_1\tau+[D_{\infty}]\big)\]
and
\begin{eqnarray*}
&&\prod_{k_1=0\atop k_1\neq D}^{5d-3}\big(k_1\tau+[D_{\infty}]\big)\\
&=&\prod_{k_1=0\atop k_1\neq D}^{5d-4}\big(k_1\tau+[D_{\infty}]\big)\Big(\big(D\tau+[D_{\infty}]\big)+(5d-3-D)\tau\Big)\\
&=&\prod_{k_1=0}^{5d-4}\big(k_1\tau+[D_{\infty}]\big)+\frac{5d-3-D}{D}\Bigg\{D\tau\prod_{k_1=0\atop k_1\neq D}^{5d-4}\big(k_1\tau+[D_{\infty}]\big)\Bigg\},\\
\end{eqnarray*}
we then apply the same procedure for
\[D\tau\prod_{k_1=0\atop k_1\neq D}^{5d-4}\big(k_1\tau+[D_{\infty}]\big).\]

Since $[D_{\infty}]^{m+1}=(5H)^m[D_{\infty}]=0$ for $m>3$, the procedure stops after finite steps. Then we have
\[\begin{aligned}
&D\tau\prod_{k_1=0\atop k_1\neq D}^{5d-3}\big(k_1\tau+[D_{\infty}]\big)=\prod_{k_1=0}^{5d-3}\big(k_1\tau+[D_{\infty}]\big)-\\
&~~~~~\frac{5d-3}{D}[D_{\infty}]\prod_{k_1=0}^{5d-4}\big(k_1\tau+[D_{\infty}]\big)+\\
&~~~~~~~\frac{(5d-4)(5d-3-D)}{D^2}[D_{\infty}]^2\prod_{k_1=0}^{5d-5}\big(k_1\tau+[D_{\infty}]\big)-\\
&~~~~~~~~~\frac{(5d-5)(5d-3-D)(5d-4-D)}{D^3}[D_{\infty}]^3\prod_{k_1=0}^{5d-6}\big(k_1\tau+[D_{\infty}]\big).
\end{aligned}\]
Since
\begin{eqnarray*}
[D_{\infty}]^m\prod_{k_1=0}^{5d-3-m}\big(k_1\tau+[D_{\infty}]\big)&=& [D_{\infty}]^{m+1}\prod_{k_1=1}^{5d-3-m}\big(k_1\tau+[D_{\infty}]\big)\\
&=&(5H)^m[D_{\infty}]\prod_{k_1=1}^{5d-3-m}\big(k_1\tau+[D_{\infty}]\big)\\
&=&(5H)^m\prod_{k_1=0}^{5d-3-m}\big(k_1\tau+[D_{\infty}]\big),\\
\end{eqnarray*}
we may further deduce that
\[\begin{aligned}
&D\tau\prod_{k_1=0\atop k_1\neq D}^{5d-3}\big(k_1\tau+[D_{\infty}]\big)=\prod_{k_1=0}^{5d-3}\big(k_1\tau+[D_{\infty}]\big)-\\
&~~~~~\frac{5d-3}{D}5H\prod_{k_1=0}^{5d-4}\big(k_1\tau+[D_{\infty}]\big)+\\
&~~~~~~~\frac{(5d-4)(5d-3-D)}{D^2}(5H)^2\prod_{k_1=0}^{5d-5}\big(k_1\tau+[D_{\infty}]\big)-\\
&~~~~~~~~~\frac{(5d-5)(5d-3-D)(5d-4-D)}{D^3}(5H)^3\prod_{k_1=0}^{5d-6}\big(k_1\tau+[D_{\infty}]\big).
\end{aligned}\]

So we may compute (\ref{eva-test}) by Lemma \ref{typeI-nf2}. The result is
\[\Big\{375d^3+225d^2-355d+84-\frac{(5d-3)5d(5d+8)}{D}\Big\}N_{3,d}.\]
It further implies that
\begin{equation}\label{new-5}
X=375d^3+225d^2-355d+84-\frac{(5d-3)5d(5d+8)}{D}.
\end{equation}

Now comparing (\ref{new-4}) with (\ref{new-5}), we get the identity
\[
\begin{aligned}
D(75d^2-15d+32-4D)+\frac{(5d-3-D)!D!}{D^{5d-2}}(-1)^{5d-2-D}T_D\\
=375d^3+225d^2-355d+84-\frac{(5d-3)5d(5d+8)}{D}.
\end{aligned}
\]
So
\[
\begin{aligned}
T_D=&\frac{(-1)^{5d-2-D}D^{5d-2}}{(5d-3-D)!D!}\times \Big\{375d^3+225d^2-355d+84\\
&-\frac{(5d-3)5d(5d+8)}{D}-D(75d^2-15d+32)+4D^2\Big\}.
\end{aligned}
\]

Summing over $D$, we have
\[
\begin{aligned}
T=\sum_{D=1}^{5d-3}T_D=\frac{109375d^6}{24}-\frac{315625d^5}{24}+\frac{246875d^4}{24}+\\
\frac{125d^3}{24}-\frac{12225d^2}{4}+\frac{3820d}{3}-188.
\end{aligned}
\]
Here, we have used the combinatorial identities for
\[\sum_{D=1}^{m}\frac{(-1)^{m-D}D^{m+k-1}}{(m-D)!D!}\]
which are given in the Appendix C, Lemma \ref{comb-idt}. The proof for Lemma \ref{keytrick} is complete.
\end{proof}

\section{Further discussion}\label{section-4}
In order to prove Conjecture \ref{conj} for all $g$. We may try to generalize Theorem \ref{keythm2} for all pairs $(g,d)$ with $d>0$. This is impossible. The reason is as follows.

It is easy to check that for $\zeta_{g,d}\in S_{g,d}'$, those constants $C_{\rho}$ in Theorem \ref{keythm2} can be solved by
\[
C_{\rho}=
\begin{cases}
1,&~ \text{if} ~\rho=\zeta_{g,d},\\
0,&~\text{otherwise}.
\end{cases}
\]

So we have
\[B_{g,d}(\zeta_{g,d})-\sum_{\rho\in S_{g,d}'}B_{g,d}(\rho)C_{\rho}=0.\]

In order to get $\zeta_{g,d}\in S_{g,d}$ such that
\[B_{g,d}(\zeta_{g,d})-\sum_{\rho\in S_{g,d}'}B_{g,d}(\rho)C_{\rho}\]
equals to $C_{g,d}N_{g,d}$ with $C_{g,d}\neq 0$, we need
\[\zeta_{g,d}=\{(l_i,m_i)\}_{i=1}^s\in S_{g,d}\backslash S_{g,d}'.\]
This requires that
\[\sum_{i=1}^s(l_i+1)\geq 5d+1,~~\sum_{i=1}^s(l_i+m_i)=5d+1-g.\]
We may deduce that
\[s-\sum_{i=1}^s m_i-g=\sum_{i=1}^s(l_i+1)- (5d+1)\geq 0.\]
So $s\geq g$. Since $l_i\geq 0$, $m_i\geq 0$ and $(l_i,m_i)\neq(0,0)$, we have $l_i+m_i\geq 1$. So
\[5d+1-g=\sum_{i=1}^s(l_i+m_i)\geq s\geq g.\]
It implies that
$$d\geq \frac{2g-1}{5}.$$
If $g\geq4$, we have $d>1$. So we can not generalize Theorem \ref{keythm2} to pair $(g,1)$ with $g\geq 4$. The right way to generalize Theorem \ref{keythm2} should be
\begin{conjecture}\label{conj-1}
For each pair $(g,d)$ with $d\geq \frac{2g-1}{5}$, there always exists $\zeta_{g,d}\in S_{g,d}$ such that
\[B_{g,d}(\zeta_{g,d})-\sum_{\rho\in S_{g,d}'}B_{g,d}(\rho)C_{\rho}\]
equals to $C_{g,d}N_{g,d}$ with $C_{g,d}\neq 0$, where those $C_{\rho}$ are determined by Theorem \ref{keythm}.
\end{conjecture}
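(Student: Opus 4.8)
The plan is to reduce the conjecture to a single non-vanishing statement about an explicit scalar, and then to attack that scalar with the same relative virtual localization machinery already used for $g=2,3$. The first observation is that every $B_{g,d}(\zeta)$ with $\zeta\in S_{g,d}$ is a type I invariant of $(Y,D_0)$ with curve class $d\alpha_{\infty}$ whose insertion $\omega_\zeta$ is built only from $\psi$-classes and pullbacks $ev_i^*(H^{m_i})$, $ev_i^*([D_{\infty}])$. Applying the virtual pushforward property of Lemma \ref{vpfp-2} to the projection $\pi:\overline{\mathcal{M}}_{\Gamma}(Y,D_0)\to\overline{\mathcal{M}}_{g,0}(Q,d)$ that forgets all marked points and pushes curves down to $Q$, the membership $\zeta\in S_{g,d}$ forces the degree of $\omega_\zeta$ to match the relative dimension, so $\pi_*\big(\omega_\zeta\cap[\overline{\mathcal{M}}_{\Gamma}(Y,D_0)]^{vir}\big)=b_{g,d}(\zeta)\,[\overline{\mathcal{M}}_{g,0}(Q,d)]^{vir}$ for a scalar $b_{g,d}(\zeta)$, exactly as in Lemmas \ref{typeI-nf1}--\ref{typeI-nf3}. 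Hence $B_{g,d}(\zeta)=b_{g,d}(\zeta)\,N_{g,d}$, and the left-hand side of the conjecture is automatically $C_{g,d}N_{g,d}$ with $C_{g,d}=b_{g,d}(\zeta_{g,d})-\sum_{\rho\in S_{g,d}'}b_{g,d}(\rho)\,C_\rho$ a scalar. The entire content of the conjecture is therefore the single inequality $C_{g,d}\neq0$ for a suitable choice of $\zeta_{g,d}$.

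For the boundary datum I would take
\[
\zeta_{g,d}=\{(5d+2-2g,0),\underbrace{(1,0),\ldots,(1,0)}_{g-1}\},
\]
which has $g$ pairs, lies in $S_{g,d}$, and satisfies $\sum_i(l_i+1)=5d+1>5d$, so that $\zeta_{g,d}\in S_{g,d}\backslash S_{g,d}'$ precisely when $5d+2-2g\geq1$, i.e. $d\geq\frac{2g-1}{5}$; this specializes to (\ref{chose-g3}) at $g=3$. With $\zeta_{g,d}$ fixed, the constants $C_\rho$ are the unique solution of the triangular system (\ref{sol-rel}), computed from the genus $0$ fiber invariants of Lemma \ref{typeI-g0} exactly as in Section \ref{partII}; the set of $\rho$ with $\mathcal{F}(\rho|\zeta_{g,d})\neq0$ is controlled by Corollary \ref{calfib-1} and Lemma \ref{discon-fbv} and remains small (partitions obtained from $\zeta_{g,d}$ by merging a few of the $(1,0)$'s into the large part). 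The remaining task is to evaluate each scalar $b_{g,d}(\cdot)$ by the relative virtual localization formula (\ref{lc-0}), whose graphs split, as in the proofs of Lemmas \ref{typeI-nf1}--\ref{typeI-nf3}, into type (I) graphs carrying a genus $g$ vertex over $D_{\infty}$ and type (II) graphs carrying a genus $g$ vertex over $D_0$ with rational fiber bubbles meeting $D_{\infty}$.

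The type (I) contributions are the easy part: they reduce to twisted Hodge integrals on $\overline{\mathcal{M}}_{g,s}(Q,d)$ that collapse, via the string, dilaton and divisor equations together with the vanishing Lemma \ref{TGW}, to explicit polynomials in $d$ for fixed $g$. The type (II) contributions are the genuine difficulty. After identifying the fixed locus with a rubber space $\overline{\mathcal{M}}_{\Gamma_b}^{\sim}$ and using the gerbe relation $[\overline{\mathcal{M}}_{G^l_0}]^{vir}=\frac{1}{\prod_i d_i}[\overline{\mathcal{M}}_{\Gamma_b}^{\sim}]^{vir}$, each such graph produces a sum over edge degrees $d_1,\ldots,d_s$ with $\sum_i d_i=5d$ of rubber integrals weighted by hypergeometric-type factors. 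The key device, generalizing Lemma \ref{keytrick}, is the bootstrap: one computes an auxiliary invariant $\langle\,|\{D\tau\prod_{k\neq D}(k\tau+[D_{\infty}])\}\cdots\rangle_{g,d\alpha_{\infty}}$ in two ways, once by localization, which isolates the unknown combinatorial sum $T_D$, and once by re-expanding the distinguished insertion into lower building blocks already evaluated, namely the $g$-part analogues of Lemmas \ref{typeI-nf1} and \ref{typeI-nf2}, and then solves for $T_D$ and sums over $D$ using the combinatorial identities of Lemma \ref{comb-idt}. Carrying this out at genus $g$ thus requires first establishing closed forms for $B_{g,d}$ of all partitions with up to $g$ parts that appear in the expansion.

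The main obstacle is the final step, $C_{g,d}\neq0$. Because every $b_{g,d}(\cdot)$ is a polynomial in $d$ whose degree grows with $g$, and because $C_{g,d}$ is assembled from these by a cancellation engineered through the $C_\rho$ to annihilate all relative $(\mathbb{P}^4,Q)$ invariants $R_{g,d}$, there is no a priori reason the leading coefficient of $C_{g,d}$ should survive; the many type (II) graphs could conspire to lower its degree or kill it outright. The genus $2,3$ evidence, where $C_{3,d}=24(5d-3)(5d-4)$ factors with roots only in the forbidden range $d<\frac{2g-1}{5}$, strongly suggests that the right strategy is not to evaluate $C_{g,d}$ term by term but to find a generating function or closed product formula for it whose zeros are visibly confined to $d<\frac{2g-1}{5}$. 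Producing such a formula, or a direct non-vanishing argument that bypasses the full combinatorial evaluation, is exactly the gap that keeps the statement a conjecture rather than a theorem.
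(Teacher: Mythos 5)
The statement you are asked to prove is stated in the paper only as a conjecture (it is Conjecture \ref{maincj} restated as Conjecture \ref{conj-1}); the paper supplies no proof of it. Section \ref{section-4} only establishes the \emph{necessity} of the hypothesis $d\geq\frac{2g-1}{5}$, by showing that any useful $\zeta_{g,d}$ must lie in $S_{g,d}\backslash S_{g,d}'$ and that this forces $\sum_i(l_i+1)\geq 5d+1$, hence $d\geq\frac{2g-1}{5}$. Your proposal is therefore being measured against a non-existent proof, and you correctly do not claim to close the statement.

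That said, your reduction is sound and faithfully generalizes what the paper actually does for $g=2,3$: Lemma \ref{vpfp-2} does give $B_{g,d}(\zeta)=b_{g,d}(\zeta)N_{g,d}$ for every $\zeta\in S_{g,d}$, so the conjecture is equivalent to the scalar inequality $C_{g,d}\neq 0$; your candidate $\zeta_{g,d}=\{(5d+2-2g,0),(1,0)^{g-1}\}$ lies in $S_{g,d}\backslash S_{g,d}'$ exactly when $d\geq\frac{2g-1}{5}$ and specializes to the paper's choices at $g=2$ (Appendix A) and $g=3$ (equation (\ref{chose-g3})); and the localization-plus-bootstrap scheme you describe is precisely the mechanism of Lemmas \ref{typeI-nf1}--\ref{typeI-nf3} and \ref{keytrick}. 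The genuine gap is the one you name yourself: nothing in this outline controls the scalar $C_{g,d}$ after the cancellations engineered by the $C_\rho$, and for general $g$ one would first need closed forms for $B_{g,d}(\rho)$ over all $\rho$ with up to $g$ parts entering the triangular system (\ref{sol-rel}), followed by a non-vanishing argument. Since that step is exactly what separates the proved cases $g=2,3$ from the open conjecture, your text should be presented as a strategy or as supporting evidence, not as a proof.
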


If this conjecture is true and we further assume that $N_{g,d}$ are known for $0<d<\frac{2g-1}{5}$, then we can recursively determine all $N_{g,d}$.

\section{Appendix A}
In this appendix, we give a short proof of Theorem \ref{keythm2} for $g=2$.

We choose $\zeta_{2,d}=\{(5d-2,0),(1,0)\}$ and set $\zeta_m=\{(5d-1-m,m)\}$. Applying Theorem \ref{keythm}, we have
\[
\begin{aligned}
A_{2,d}(\zeta_{2,d})-&\sum_{m=0}^3 5^m(5d-m)A_{2,d}(\zeta_m)=\\
&B_{2,d}(\zeta_{2,d})-\sum_{m=0}^3 5^m(5d-m)B_{2,d}(\zeta_m)+NPT.
\end{aligned}
\]

We need to show that
\begin{equation}\label{coef-g2}
B_{2,d}(\zeta_{2,d})-\sum_{m=0}^3 5^m(5d-m)B_{2,d}(\zeta_m)
\end{equation}
can be written as $C_{2,d}N_{2,d}$ with $C_{2,d}\neq 0$.

By Lemma \ref{typeI-nf2}, we have
\[B_{2,d}(\zeta_{2,d})=\{4(5d -1)(5d- 2) + 5d(50d^2 - 5d) + 5d(5d - 1)\}N_{2,d}.\]
By Lemma \ref{typeI-nf1}, we have
\[
B_{2,d}(\zeta_m)=
\begin{cases}
(50d^2 - 5d)N_{2,d}, &\text{if}~m=0,\\
dN_{2,d}, &\text{if}~m=1,\\
0, &\text{if}~m=2,3.
\end{cases}
\]
So (\ref{coef-g2}) becomes
\[4(5d -1)(5d- 2)N_{2,d}.\]
Obviously, $4(5d -1)(5d- 2)\neq 0$. The proof for Theorem \ref{keythm2} with $g=2$ is complete.

\section{Appendix B}
We will apply relative virtual localization formula \cite{GrVa} to moduli space $\overline{\mathcal{M}}_{\Gamma}(Y,D_0)$. The form of presentation is similar to that of \cite{FP2} where they gave a description of relative virtual localization formula in the case of $\mathbb{P}^1$.

There is a natural $\mathbb{C}^*$-action on the bundle $N_{W/V}\oplus \mathcal{O}_W$, which is given by scaling on the second factor $\mathcal{O}_W$. It induces a natural $\mathbb{C}^*$-action on $Y=\mathbb{P}(N_{W/V}\oplus \mathcal{O}_W)$.

The target for a general stable relative map in $\overline{\mathcal{M}}_{\Gamma}(Y,D_0)$ can be written as $Y_s\cup Y$, where $D_{\infty}$ of $Y_s$ is glued to $D_0$ of $Y$. $\mathbb{C}^*$ acts on $Y$ and leaves $Y_s$ invariant. The $\mathbb{C}^*$-action on $\overline{\mathcal{M}}_{\Gamma}(Y,D_0)$ is given by composition.

Let
\[c_s: Y_s\cup Y\longrightarrow Y\]
be the natural contraction to the last copy of $Y$. For a $\mathbb{C}^*$-fixed point
\begin{equation}\label{fp}
[(\Sigma,x,y,f,Y_s\cup Y)]
\end{equation}
in $\overline{\mathcal{M}}_{\Gamma}(Y,D_0)$, the image of a connected component of $\Sigma$ under the composite map $c_s\circ f$  will either completely sit in the fixed points set $D_0\bigsqcup D_{\infty}$ or in a fiber of $Y$. Any irreducible component $\Sigma_j$ of $\Sigma$ which sits in a fiber of $Y$ must be a rational sphere.  We suppose that the map $c_s\circ f$ restricted to $\Sigma_j$ is a degree $\nu_j$ cover of a fiber.

Recall that $\Gamma=(g,\beta,\{\mu_i\}_{i=1}^n,m)$. We associate a localization graph $G^l_0$ to the fixed point (\ref{fp}). It consists of
\begin{itemize}
\item[(i)] A set $S$ of vertices $v_1,v_2,\ldots,v_{|S|}$ corresponding to connected components of $(c_s\circ f)^{-1}(D_0\bigsqcup D_{\infty})$. The images of the connected components naturally give an assignment
    \[\pi:S \rightarrow \{D_0,D_{\infty}\}.\]
\item[(ii)] An assignment of genera $g:S
\rightarrow \mathbb{Z}_{\geq 0}$ (if the connected component is a point, we take it to be 0), and an assignment of curve classes $\beta:S\rightarrow H_2(Y,\mathbb{Z})$ given by push-forward under the map $c_s\circ f$.
\item[(iii)] An assignment of absolute marked points $a:\{1,\ldots,m\}\rightarrow S$, an assignment of boundary marked points $b_0:\{1,\ldots,n\}\rightarrow \{v\in S:\pi(v)=D_{0}\}$, and an assignment of contact orders to the boundary marked points $\mu_0:\{1,\ldots,n\}\rightarrow \mathbb{Z}_{>0}$ such that $\mu_0(j)=\mu_j$.
\item[(iv)] A set $E$ of edges $e_1,e_2,\ldots,e_{|E|}$ corresponding to the irreducible components of $\Sigma$ mapped to a fiber of $Y$ under $c_s\circ f$. Edge $e_r$ is incident to a vertex $v_l$ if the corresponding two components intersect. We require that $(S,E)$ forms a connected graph.
\item[(v)] An assignment of degrees $d:E\rightarrow \mathbb{Z}_{>0}$ given by degrees of the covers.
\end{itemize}

The fixed points of $\overline{\mathcal{M}}_{\Gamma}(Y,D_0)$ with the same localization graph $G^l_0$ form a connected component of the fixed loci. We denote it as $\overline{\mathcal{F}}_{G^l_0}$.

The valence $val(v)$ of a vertex $v$ is defined to be the number of all marked points and edges associated to $v$. The set of those vertices which satisfy $\pi(v)=D_{\infty}$ (resp. $D_{0}$) is denoted as $V_{\infty}$ (resp. $V_0$).

Let $v\in V_{\infty}$. The restriction of relative map $f$ to the corresponding connected component of $v$ can be identified as a stable map in
$$\overline{\mathcal{M}}_{g(v),val(v)}\bigr(W,p_*\big(\beta(v)\big)\bigr).$$
We set
\[\overline{\mathcal{M}}_{\infty}=\prod_{v\in V_{\infty}} \overline{\mathcal{M}}_{g(v),val(v)}\bigr(W,p_*\big(\beta(v)\big)\bigr).\]
The possible unstable moduli spaces $\overline{\mathcal{M}}_{0,1}\bigr(W,0)$ or $\overline{\mathcal{M}}_{0,2}\bigr(W,0)$ in $\overline{\mathcal{M}}_{\infty}$ should be treated as $W$.

For $e_r\in E$, we set $\overline{\mathcal{M}}_{e_r}$ to be the stack parametrizing those maps from $\mathbb{P}^1$ to $Y$ which are $\mathbb{C}^*$-invariant ($\mathbb{C}^*$-action induced from $\mathbb{C}^*$-action on $Y$) and degree $d(e_r)$ cover of a fiber. For $f_{e_r},f'_{e_r}\in \overline{M}_{e_r}$, an arrow from $f_{e_r}$ to $f'_{e_r}$ consists of an isomorphism $\psi:\mathbb{P}^1\rightarrow \mathbb{P}^1$ such that $f'_{e_r}\circ \psi=f_{e_r}$. We have
\begin{lemma}\label{rstack}
\[\overline{\mathcal{M}}_{e_r}\simeq \sqrt[d(e_r)]{N_{W/V}/W},\]
where $\sqrt[d(e_r)]{N_{W/V}/W}$ is the stack over $W$ of $d(e_r)$th roots of $N_{W/V}$ (for the definition of $\sqrt[d(e_r)]{N_{W/V}/W}$, see \cite{AGV} Appendix B).
\end{lemma}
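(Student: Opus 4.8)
The plan is to produce a natural equivalence of stacks over $W$, built first over a single fiber and then spread out over families.

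I would begin with the case of a single fiber. Fix $w\in W$ and let $p_0=D_0\cap Y_w$ and $p_\infty=D_\infty\cap Y_w$ be the two $\mathbb{C}^*$-fixed points of $Y_w\cong\mathbb{P}^1$. A degree $d(e_r)$ cover $f:\mathbb{P}^1\to Y_w$ that is $\mathbb{C}^*$-invariant must intertwine the scaling action on the target with a scaling action on the source; an elementary analysis of equivariant covers of $(\mathbb{P}^1,p_0,p_\infty)$ shows that $f$ is totally ramified over $p_0$ and over $p_\infty$ and is isomorphic, in suitable coordinates, to $z\mapsto z^{d(e_r)}$. Consequently the deck group $\{\psi:f\circ\psi=f\}$ is exactly $\boldsymbol{\mu}_{d(e_r)}$, acting by $z\mapsto\zeta z$, so over a point $\overline{\mathcal{M}}_{e_r}$ is $B\boldsymbol{\mu}_{d(e_r)}$, matching the generic fiber of the root stack.

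Next I would define a morphism $\Phi:\overline{\mathcal{M}}_{e_r}\to\sqrt[d(e_r)]{N_{W/V}/W}$ in families. Given a family $F:\mathcal{C}\to Y$ of such covers over $g:T\to W$, the ramification locus over $D_\infty$ is a section of $\mathcal{C}\to T$ whose relative tangent line bundle $\mathbb{L}$ on $T$ carries, from the order-$d(e_r)$ ramification together with the canonical identification $N_{D_\infty/Y}\cong N_{W/V}$ used already in the localization computations above, a canonical isomorphism $\phi:\mathbb{L}^{\otimes d(e_r)}\xrightarrow{\ \sim\ }g^*N_{W/V}$. Sending $F$ to the pair $(\mathbb{L},\phi)$ is visibly compatible with base change and defines $\Phi$. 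For the quasi-inverse I would send a $d(e_r)$-th root $(\mathbb{L},\phi)$ to $\mathcal{C}=\mathbb{P}(\mathbb{L}\oplus\mathcal{O}_T)$ together with the $d(e_r)$-th power map
\[
\mathcal{C}=\mathbb{P}(\mathbb{L}\oplus\mathcal{O}_T)\longrightarrow \mathbb{P}\big(g^*N_{W/V}\oplus\mathcal{O}_T\big)=Y\times_W T,\qquad [s:t]\longmapsto[s^{\otimes d(e_r)}:t^{\otimes d(e_r)}],
\]
where $\phi$ is used to read $s^{\otimes d(e_r)}$ as a section of $g^*N_{W/V}$; this is a $\mathbb{C}^*$-invariant degree $d(e_r)$ cover restricting to $z\mapsto z^{d(e_r)}$ on each fiber. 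Since the relative tangent line of $\mathbb{P}(\mathbb{L}\oplus\mathcal{O}_T)$ along its $\infty$-section is $\mathbb{L}$, the two constructions are mutually inverse.

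The hard part will not be the construction itself but its bookkeeping. I must pin down the line-bundle conventions (rooting $N_{W/V}$ rather than its inverse is exactly what forces the use of the $D_\infty$-ramification point rather than the $D_0$-point), verify that all the tangent-line and normal-bundle identifications are canonical and stable under arbitrary base change, and check that the $\boldsymbol{\mu}_{d(e_r)}$-automorphisms correspond correctly — the deck transformation $z\mapsto\zeta z$ acting on $\mathbb{L}$ by $\zeta$ — so that $\Phi$ is an equivalence of $\boldsymbol{\mu}_{d(e_r)}$-gerbes over $W$ and not merely an isomorphism of coarse spaces. Matching this band with the convention of \cite{AGV}, Appendix B, then completes the identification.
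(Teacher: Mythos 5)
Your construction is correct and is essentially the paper's own proof: both identify a family of $\mathbb{C}^*$-invariant degree $d(e_r)$ covers over $T$ with a line bundle $\mathcal{L}$ on $T$ together with an isomorphism $\mathcal{L}^{\otimes d(e_r)}\simeq\varphi^*N_{W/V}$, the paper extracting $\mathcal{L}$ from the splitting $\mathcal{C}\simeq\mathbb{P}(\mathcal{L}\oplus\mathcal{O}_T)$ determined by the two sections over $D_0$ and $D_\infty$ (citing \cite{JI}), while you take the relative tangent line at the $D_\infty$-section, which is the same bundle. The only substantive addition on your side is the explicit quasi-inverse via the $d(e_r)$-th power map on $\mathbb{P}(\mathcal{L}\oplus\mathcal{O}_T)$, a verification the paper leaves as ``easy to check.''
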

\begin{proof}
The stack $\overline{\mathcal{M}}_{e_r}$ is a category whose objects over a $\mathbb{C}$-scheme $T$ are pairs $(\mathcal{C},f)$, where $\mathcal{C}$ is a flat family of smooth rational curves over $T$, and $f$ is a morphism from $\mathcal{C}$ to $Y$ such that when restricted to each fiber of $\mathcal{C}$, $f$ is $\mathbb{C}^*$-invariant and degree $d(e_r)$ cover of a fiber of $Y$.

Since $f$ maps fibers of $\mathcal{C}$ to fibers of $Y$, it induces a morphism $\varphi$ between bases $T$ and $W$. The pullbacks of two $\mathbb{C}^*$-invariant divisors $D_0$ and $D_{\infty}$ via $f$ naturally give two separate sections $\sigma_0,\sigma_{\infty}: T\rightarrow\mathcal{C}$. According to \cite{JI}, Section 1.1.1, it implies that $\mathcal{C}\simeq \mathbb{P}(\mathcal{L}\oplus \mathcal{O}_{T})$ for some line bundle $\mathcal{L}$ on $T$. We may choose $\mathcal{L}$ such that $\sigma_0,\sigma_{\infty}$ are determined by the factors $\mathcal{L},
\mathcal{O}_T$ respectively.

Since when restricted to each fiber of $\mathbb{P}(\mathcal{L}\oplus \mathcal{O}_{T})$, $f$ is $\mathbb{C}^*$-invariant and degree $d(e_r)$ cover of a fiber of $\mathbb{P}(N_{W/V}\oplus \mathcal{O}_W)$ , it naturally induces an isomorphism $\bar{f}: \mathcal{L}^{\otimes d(e_r)}\simeq\varphi^*(N_{W/V})$.

The triple $(\varphi,\mathcal{L},\bar{f})$ gives an object of $\sqrt[d(e_r)]{N_{W/V}/W}$. It naturally induce a functor $\mathcal{F}:\overline{\mathcal{M}}_{e_r}\rightarrow \sqrt[d(e_r)]{N_{W/V}/W}$. It is easy to check that $\mathcal{F}$ gives an isomorphism between $\overline{\mathcal{M}}_{e_r}$ and $\sqrt[d(e_r)]{N_{W/V}/W}$.
\end{proof}

Let $\boldsymbol{\mu}_d$ be a subgroup of $\mathbb{G}_{m}$ consisting of $d$-roots of unity.

The stack $\sqrt[d(e_r)]{N_{W/V}/W}$ is a gerbe over $W$ banded by $\boldsymbol{\mu}_{d(e_r)}$. Locally, $\sqrt[d(e_r)]{N_{W/V}/W}$ is a quotient of $W$ by the trivial action of $\boldsymbol{\mu}_{d(e_r)}$, but it is not true globally (see \cite{AGV}, Appendix B). The coarse moduli space of $\sqrt[d(e_r)]{N_{W/V}/W}$ is $W$.

Next, we set
\[\overline{\mathcal{M}}_{E}:=\prod_{e_r\in E}\overline{\mathcal{M}}_{e_r}.\]
The coarse moduli space for $\overline{\mathcal{M}}_{E}$ is $W^{|E|}$. So there is a natural morphism
\[\gamma:\overline{\mathcal{M}}_{E}\longrightarrow W^{|E|}.\]

The evaluation at those marked points coming from edges gives a natural map
\[ev_{\infty}:\overline{\mathcal{M}}_{\infty}\longrightarrow W^{|E|}.\]

The fiber product
\[\overline{\mathcal{M}}_1:=\overline{\mathcal{M}}_{\infty}\times_{W^{|E|}} \overline{\mathcal{M}}_{E}\]
can be seen as a stack parametrizing $\mathbb{C}^*$-invariant (possibly disconnected) stable maps from curves to $Y$ with data of stable maps inherited from $G_0^l$.

The restriction of map $f$ in (\ref{fp}) to those components corresponding to vertices in $V_0$, can be seen as a rubber map in $\overline{\mathcal{M}}_{G^r_0}(Y,D_0\cup D_{\infty})^{\bullet,\sim}$, where $G^r_0$ is a relative graph induced from $G_0^l$. We may abbreviate $\overline{\mathcal{M}}_{G^r_0}(Y,D_0\cup D_{\infty})^{\bullet,\sim}$ as $\overline{\mathcal{M}}_{G^r_0}^{\bullet,\sim}$.

The evaluation at those marked points coming from edges also gives a natural map from $\overline{\mathcal{M}}_{G^r_0}^{\bullet,\sim}$ to $W^{|E|}$. We use $\overline{\mathcal{M}}_{G^l_0}$ to denote the fiber product
\[\overline{\mathcal{M}}_{1}\times_{W^{|E|}} \overline{\mathcal{M}}_{G^r_0}^{\bullet,\sim}.\]

There is a natural map
\begin{equation}\label{rel-eva}
ev^{E}=(ev_1^{E},ev_2^{E},\ldots,ev_{|E|}^{E}):\overline{\mathcal{M}}_{G^l_0}\longrightarrow W^{|E|}.
\end{equation}

Suppose that $V_0$ is empty. Since each edge connects some vertex of $V_0$ to that of $V_{\infty}$, we know that $E$ is also empty. Then $\overline{\mathcal{M}}_{G^l_0}$ simply becomes $\overline{\mathcal{M}}_{\infty}$.

The automorphism group $Aut(G^l_0)$ of a localization graph $G^l_0$ consists of those automorphisms of the graph $G^l_0$ which leave all the assignments invariant. $Aut(G^l_0)$ naturally acts on $\overline{\mathcal{M}}_{G^l_0}$.

Now it is easy to see that the fixed locus $\overline{\mathcal{F}}_{G^l_0}$ is simply a quotient of $\overline{\mathcal{M}}_{G^l_0}$ by the automorphism group $Aut(G^l_0)$. We denote the quotient map as
\[\tau_{G^l_0}:\overline{\mathcal{M}}_{G^l_0}\rightarrow \overline{\mathcal{F}}_{G^l_0}.\]

The virtual fundamental class of $\overline{\mathcal{M}}_{G^l_0}$ is given by
\begin{equation*}
[\overline{\mathcal{M}}_{G^l_0}]^{vir}=\triangle^{!}\big([\overline{\mathcal{M}}_{1}]^{vir}\times [\overline{\mathcal{M}}_{G^r_0}^{\bullet,\sim}]^{vir}\big)
\end{equation*}
where $\triangle:W^{|E|}\rightarrow W^{|E|}\times W^{|E|}$ is the diagonal map, and $[\overline{\mathcal{M}}_{1}]^{vir}$, $[\overline{\mathcal{M}}_{G^r_0}^{\bullet,\sim}]^{vir}$ are induced by the $\mathbb{C}^*$-fixed part of the pullback of the obstruction theory of $\overline{\mathcal{M}}_{\Gamma}(Y,D_0)$ (see \cite{GrVa}, Section 3.2).

Since $\overline{\mathcal{M}}_{E}$ is a $\prod_{e_r\in E}d(e_r)$-gerbe over $W^{|E|}$, we know that $\overline{\mathcal{M}}_{1}$ is a $\prod_{e_r\in E}d(e_r)$-gerbe over $\overline{\mathcal{M}}_{\infty}$. So as classes in the same coarse moduli space, we have
\[[\overline{\mathcal{M}}_1]^{vir}=\frac{1}{\prod_{e_r\in E}d(e_r)}[\overline{\mathcal{M}}_{\infty}]^{vir},\]
where $[\overline{\mathcal{M}}_{\infty}]^{vir}$ is the usual virtual fundamental class of $\overline{\mathcal{M}}_{\infty}$ treated as moduli space of (possibly disconnected) stable maps.

Now we have
\begin{equation}\label{virclass}
[\overline{\mathcal{M}}_{G^l_0}]^{vir}=\frac{1}{\prod_{e_r\in E}d(e_r)}\triangle^{!}\big([\overline{\mathcal{M}}_{\infty}]^{vir}\times [\overline{\mathcal{M}}_{G^r_0}^{\bullet,\sim}]^{vir}\big).
\end{equation}

When the set $V_0$ is empty, we have $\overline{\mathcal{M}}_{G^l_0}=\overline{\mathcal{M}}_{\infty}$. So in that case $[\overline{\mathcal{M}}_{G^l_0}]^{vir}=[\overline{\mathcal{M}}_{\infty}]^{vir}$.

With some abuse of notations, the pullback of virtual normal bundle $N_{G^l_0}^{vir}$ of $\overline{\mathcal{F}}_{G^l_0}$ to $\overline{\mathcal{M}}_{G^l_0}$ will still be denoted as $N_{G^l_0}^{vir}$. The pullback of class $\psi_{\infty}$ in $\overline{\mathcal{M}}_{G^r_0}^{\bullet,\sim}$ to $\overline{\mathcal{M}}_{G^l_0}$ will also be denoted as $\psi_{\infty}$.

The equivariant Euler class of virtual normal bundle $e_T(N_{G^l_0}^{vir})$ is important in localization formula. Gathmann has given a description of $e_T(N_{G^l_0}^{vir})$ in \cite{Ga1}, Section 5.2. We will give a description of the inverse of $e_T(N_{G^l_0}^{vir})$ according to \cite{GrVa}.

If the target for any map in $\overline{\mathcal{F}}_{G^l_0}$ is $Y$, we call it simple fixed locus. Otherwise, we call it composite fixed locus.

We recall that $t$ is the generator of $H^*_{\mathbb{C}^*}(pt)$ corresponding to the dual of the standard representation of $\mathbb{C}^*$. In other words, $t$ is the hyperplane class of $\mathbb{C}\mathbb{P}^{\infty}$.

Let us consider composite fixed locus at first.

The contribution of each edge $e_r$ to the inverse of $e_T(N_{G^l_0}^{vir})$ is given by
\[\frac{t+(ev_r^{E})^*\big(c_1(N_{W/V})\big)}{\bigr(t+(ev_r^{E})^*\big(c_1(N_{W/V})\big)\bigr)^{d(e_r)}\frac{d(e_r)!}{d(e_r)^{d(e_r)}}}.\]

Let $v\in V_{\infty}$. We assume that edges $e_{l_1},e_{l_2},\ldots,e_{l_s}$ are incident to $v$.

If $2g(v)-2+val(v)> 0$ or $p_*(\beta(v))\neq 0$, then the contribution of vertex $v$ which is denoted by $1/N(v)$ becomes
\begin{equation}\label{rvlf-main}
\frac{1}{e_{T}(H^0/H^1(f_v^*N_{W/V}))}\prod_{i=1}^s\frac{1}{\frac{t+(ev_{l_i}^{E})^*\big(c_1(N_{W/V})\big)}{d(e_{l_i})}-\psi_{e_{l_i}}},
\end{equation}
where $\psi_{e_{l_i}}$ is the $\psi$-class of
$$\overline{\mathcal{M}}_{g(v),val(v)}\bigr(W,p_*\big(\beta(v)\big)\bigr)$$
associated to the marked point coming from edge $e_{l_i}$, $f_v$ is the restriction of $f$ to the component $\Sigma_v$ corresponding to $v$, and $H^0/H^1(f_v^*N_{W/V})$ is the virtual vector bundle
$$H^0(\Sigma_v,f_v^*N_{W/V})\ominus H^1(\Sigma_v,f_v^*N_{W/V}),$$
see \cite{CG} for more details of virtual vector bundle.

If $2g(v)-2+val(v)\leq 0$ and $p_*(\beta(v))=0$, then the component corresponding to $v$ must be an isolated point $q$ by stable condition of relative maps. There are three cases.
\begin{itemize}
\item[i)] If $val(v)=1$, then only one edge $e_{l_1}$ is incident to $v$. The contribution of vertex $v$ is
\begin{equation}\label{rvlf-i}
\frac{1}{d(e_{l_1})}.
\end{equation}
\item[ii)] If $val(v)=2$ and $q$ is a marked point, then still only one edge $e_{l_1}$ is incident to $v$. The contribution is
\begin{equation}\label{rvlf-ii}
\frac{1}{t+(ev_{l_1}^{E})^*(c_1(N_{W/V}))}.
\end{equation}
\item[iii)] If $val(v)=2$ and two edges $e_{l_1}$ and $e_{l_2}$ are incident to $v$, the contribution is
\[\frac{1}{t+(ev_x^{E})^*\big(c_1(N_{W/V})\big)}\frac{1}{\frac{t+(ev_{l_1}^{E})^*\big(c_1(N_{W/V})\big)}{d(e_{l_1})}+\frac{t+(ev_{l_2}^{E})^*\big(c_1(N_{W/V})\big)}{d(e_{l_2})}}.\]
\end{itemize}
Here, $x$ could either be $l_1$ or $l_2$. Since $ev_{l_1}^{E}=ev_{l_2}^{E}$, it is well defined.

There is also one contribution from deforming the target singularity, which is given by
\[\frac{\prod_{e_r\in E}d(e_r)}{-t-{\psi}_{\infty}}.\]

Since $\mathbb{C}^*$ acts only on the second factor of $Y_s\cup Y$, there will be no contributions from vertices of $V_0$.

So the inverse of equivariant Euler class of $N_{G^l_0}^{vir}$ is given by
\begin{equation}\label{rvlf}
\frac{\prod_{e_r\in E}d(e_r)}{-t-{\psi}_{\infty}}\prod_{v\in V_{\infty}}\frac{1}{N(v)}\prod_{e_r\in E}\frac{t+(ev_r^{E})^*\big(c_1(N_{W/V})\big)}{\bigr(t+(ev_r^{E})^*\big(c_1(N_{W/V})\big)\bigr)^{d(e_r)}\frac{d(e_r)!}{d(e_r)^{d(e_r)}}}.
\end{equation}

In the situation of simple fixed locus, $\overline{\mathcal{M}}_{G^r_0}^{\bullet,\sim}$ degenerates into $W^{|E|}$. We have
$$\overline{\mathcal{M}}_{G^l_0}=\overline{\mathcal{M}}_{1}\times_{W^{|E|}} \overline{\mathcal{M}}_{G^r_0}^{\bullet,\sim}=\overline{\mathcal{M}}_{1}.$$
The connectedness condition implies that there is only one vertex $v$ in $V_{\infty}$.

The inverse of $e_T(N_{G^l_0}^{vir})$ becomes
\begin{equation}\label{el-simpls}
\frac{1}{N(v)}\prod_{e_r\in E}\frac{t+(ev_r^{E})^*\big(c_1(N_{W/V})\big)}{\bigr(t+(ev_r^{E})^*\big(c_1(N_{W/V})\big)\bigr)^{d(e_r)}\frac{d(e_r)!}{d(e_r)^{d(e_r)}}}.
\end{equation}
Here, $\frac{1}{N(v)}$ can be determined as in the case of composite fixed locus.

The relative virtual localization formula expresses the equivariant virtual fundamental class of $\overline{\mathcal{M}}_{\Gamma}(Y,D_0)$ in terms of contribution from each localization graph $G^l_0$:
\begin{equation}\label{lc-0}
[\overline{\mathcal{M}}_{\Gamma}(Y,D_0)]^{vir}_T=\sum_{G^l_0}\frac{1}{|Aut(G^l_0)|}(\tau_{G^l_0})_*\Biggr(\frac{[\overline{\mathcal{M}}_{G^l_0}]^{vir}}{e_T(N_{G^l_0}^{vir})}\Biggr).
\end{equation}

We remark that when $(Y,D_0)=(\mathbb{P}^1,\mathbf{0})$, (\ref{lc-0}) is a special case of Formula (6) in \cite{FP2}.

\section{Appendix C}
We give some combinatorial identities which will be used in the proofs of Lemma \ref{typeI-nf3} and Lemma \ref{keytrick}.

For $m$ positive integer, $k$ non-negative integer, we define
\[C_k(m)=\sum_{D=1}^{m}\frac{(-1)^{m-D}D^{m+k-1}}{(m-D)!D!}.\]
\begin{lemma}\label{comb-idt}
The generating function
\[f_k(x)=\sum_{m=1}^{\infty}C_k(m)x^m\]
equals to $P^k(x)$,
where $P^k$ is the $k$th product of the operator
$$\frac{x}{1-x}\frac{d}{dx}$$
which acts on the function $x$.
\end{lemma}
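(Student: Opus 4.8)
The plan is to prove the identity by induction on $k$, with the inductive engine being the single recursion $f_{k+1}(x)=P f_k(x)$ for the operator $P=\frac{x}{1-x}\frac{d}{dx}$; since $P^0$ acts as the identity, the base case is simply $f_0(x)=x$. First I would record the effect of $P$ on an arbitrary formal power series: if $h(x)=\sum_{m\ge 1}a_m x^m$, then
\[
P h(x)=\frac{x}{1-x}h'(x)=\frac{1}{1-x}\sum_{m\ge 1} m a_m x^m=\sum_{n\ge 1}\Bigl(\sum_{m=1}^{n} m a_m\Bigr)x^n,
\]
using $\tfrac{1}{1-x}=\sum_{j\ge0}x^j$. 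Applying this with $a_m=C_k(m)$ shows that the statement $f_{k+1}=P f_k$ is equivalent to the coefficient identity
\[
C_{k+1}(n)=\sum_{m=1}^{n} m\,C_k(m),\qquad n\ge 1,\ k\ge 0,
\]
which I will refer to as $(\star)$.

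Next I would prove $(\star)$ by telescoping, which isolates the real content in the three-term relation $m\,C_k(m)=C_{k+1}(m)-C_{k+1}(m-1)$ for $m\ge1$, $k\ge0$, with the convention $C_{k+1}(0)=0$. To obtain this I would first rewrite $C_k(m)=\frac{1}{m!}\sum_{D=0}^{m}(-1)^{m-D}\binom{m}{D}D^{m+k-1}$ (the $D=0$ term vanishing as soon as $m+k-1\ge 1$) and recognize it, through the classical expansion $S(a,b)=\frac{1}{b!}\sum_{j=0}^{b}(-1)^{b-j}\binom{b}{j}j^{a}$, as the Stirling number of the second kind $C_k(m)=S(m+k-1,m)$. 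The desired three-term relation is then exactly the Stirling recursion $S(a+1,b)=b\,S(a,b)+S(a,b-1)$ read with $a=m+k-1$ and $b=m$, since $S(m+k,m)=C_{k+1}(m)$ and $S(m+k-1,m-1)=C_{k+1}(m-1)$. Summing over $1\le m\le n$ telescopes to $(\star)$, and the base case $f_0=x$ follows from $C_0(1)=1$ together with $C_0(m)=S(m-1,m)=0$ for $m\ge 2$.

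The one place requiring care, and the only genuine obstacle, is the boundary term $(m,k)=(1,0)$, where the identification $C_k(m)=S(m+k-1,m)$ breaks down: the sum defining $C_0(1)$ omits the $D=0$ index and gives $C_0(1)=1$, whereas $S(0,1)=0$, the discrepancy being precisely the $0^0$ ambiguity. I would dispose of this by verifying the three-term relation there directly, $1\cdot C_0(1)=1=C_1(1)-C_1(0)$, so that the telescoping argument remains valid for every $(m,k)$ including this exceptional one. With $(\star)$ established, the operator computation of the first paragraph closes the induction and yields $f_k=P^k(x)$ for all $k\ge 0$.
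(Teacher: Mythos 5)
Your proof is correct, and it takes a genuinely different route from the paper's. The paper exchanges the order of summation to obtain the closed form $f_k(x)=\sum_{D\ge 1}\frac{D^{D+k-1}}{D!}(xe^{-x})^D$, derives the recursion $\bigl(\frac{x}{1-x}\frac{d}{dx}\bigr)f_k=f_{k+1}$ by differentiating that expression, and then identifies $f_0(x)=x$ via the Lambert $W$ series $x=\sum_{D\ge1}\frac{(-D)^{D-1}}{D!}(xe^x)^D$. You instead work entirely at the level of coefficients: you recognize $C_k(m)$ as the Stirling number $S(m+k-1,m)$, reduce the operator identity $f_{k+1}=Pf_k$ to the summation formula $C_{k+1}(n)=\sum_{m=1}^n mC_k(m)$, and obtain the latter by telescoping the standard Stirling recursion $S(a+1,b)=bS(a,b)+S(a,b-1)$. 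Your approach is more elementary (no Lambert $W$, no rearrangement of a double series), at the cost of having to track the $0^0$ boundary case $(m,k)=(1,0)$ where the identification $C_k(m)=S(m+k-1,m)$ fails; you correctly isolate and verify that single exceptional instance of the three-term relation directly, and your base case $C_0(1)=1$, $C_0(m)=S(m-1,m)=0$ for $m\ge 2$ matches the paper's conclusion $f_0(x)=x$. The paper's route is slicker and yields the intermediate closed form in terms of $xe^{-x}$, which is of independent interest; yours makes the combinatorial content (Stirling numbers) explicit.
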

\begin{proof}
\[
\begin{aligned}
f_k(x)&=\sum_{m=1}^{\infty}\sum_{D=1}^m\frac{(-1)^{m-D}D^{m+k-1}}{(m-D)!D!}x^m\\
&=\sum_{D=1}^{\infty}\frac{D^{D+k-1}}{D!}x^D\sum_{m\geq D}\frac{(-1)^{m-D}}{(m-D)!}(Dx)^{m-D}=\sum_{D=1}^{\infty}\frac{D^{D+k-1}}{D!}(xe^{-x})^D.
\end{aligned}
\]
Taking the derivative on both sides, it is easy to see that
\[\left(\frac{x}{1-x}\frac{d}{dx}\right)f_k(x)=f_{k+1}(x).\]
So we have
\[f_k(x)=\left(\frac{x}{1-x}\frac{d}{dx}\right)^kf_0(x).\]
Recall that the famous Lambert W function is
\[W(x)=\sum_{D=1}^{\infty}\frac{(-D)^{D-1}}{D!}x^D.\]
It is an inverse function of $xe^x$. So we have
\[x=\sum_{D=1}^{\infty}\frac{(-D)^{D-1}}{D!}(xe^x)^D.\]
It is easy to deduce from above that $f_0(x)=x$. So we conclude that
\[f_k(x)=P^k(x).\]
\end{proof}
\begin{example}
\begin{eqnarray*}
f_1(x) & = & \frac{x}{1-x},\\
f_2(x) & = & \frac{x}{(1-x)^3},\\
f_3(x) & = & \frac{x+2x^2}{(1-x)^5},\\
f_4(x) & = & \frac{x+8x^2+6x^3}{(1-x)^7}.
\end{eqnarray*}
So we have the combinatorial identities
\begin{eqnarray*}
C_1(m) & = & 1,\\
C_2(m) & = & {m+1\choose 2},\\
C_3(m) & = & {m+3 \choose 4}+2{m+2\choose 4},\\
C_4(m) & = & {m+5\choose 6}+8{m+4\choose 6}+6{m+3\choose 6 }.
\end{eqnarray*}
\end{example}

\section{Appendix D}
In this appendix, we will give a computation of $N_{2,1}$ using our method. Those Gromov-Witten invariants of $\mathbb{P}^4$ we need will be computed by Gathmann's program GROWI \cite{Ga3}.

The system of equations we need to determine $N_{2,1}$ can be derived from computing the following four one-point invariants
\[\Big\langle\Big\{H^m\prod_{k=0}^{4-m}\bigr(k\tau+\iota_*(Id)\bigr)\Big\}\Big\rangle_{2,1}^{\mathbb{P}^4},~~~m=0,1,2,3\]
and one two-point invariant
\[\Big\langle\Big\{\prod_{k_1=0}^3\bigr(k_1\tau+\iota_*(Id)\bigr)\Big\}\Big\{\prod_{k_2=0}^1\bigr(k_1\tau+\iota_*(Id)\bigr)\Big\}\Big\rangle_{2,1}^{\mathbb{P}^4}\]
by degeneration formula.

Those principal terms in the degeneration formula have already be computed in the proof of Theorem \ref{thm-2}. And those non-principal parts only involve of relative Gromov-Witten invariants of the pair $(\mathbb{P}^4,Q)$ whose genus $g\leq 1$ together with some fiber invariants. Both of them can be determined by Maulik-Pandharipande's algorithm. For simplicity, we will write down the total contribution of those non-principal parts and omit the details of computation here. The results can be listed as follows.
\begin{eqnarray*}
\Big\langle\Big\{H^3\prod_{k=0}^1\bigr(k\tau+\iota_*(Id)\bigr)\Big\}\Big\rangle_{2,1}^{\mathbb{P}^4}&=&\Big\langle\Big|\eta_3\Big\rangle_{2,1}^{\mathbb{P}^4,Q},\\
\Big\langle\Big\{H^2\prod_{k=0}^2\bigr(k\tau+\iota_*(Id)\bigr)\Big\}\Big\rangle_{2,1}^{\mathbb{P}^4}&=&\Big\langle\Big|\eta_2\Big\rangle_{2,1}^{\mathbb{P}^4,Q}-\frac{25}{24},\\
\Big\langle\Big\{H^1\prod_{k=0}^3\bigr(k\tau+\iota_*(Id)\bigr)\Big\}\Big\rangle_{2,1}^{\mathbb{P}^4}&=&\Big\langle\Big|\eta_1\Big\rangle_{2,1}^{\mathbb{P}^4,Q}+N_{2,1}+\frac{725}{24},\\
\Big\langle\Big\{H^0\prod_{k=0}^4\bigr(k\tau+\iota_*(Id)\bigr)\Big\}\Big\rangle_{2,1}^{\mathbb{P}^4}&=&\Big\langle\Big|\eta_0\Big\rangle_{2,1}^{\mathbb{P}^4,Q}+45N_{2,1}+\frac{45775}{72},\\
\end{eqnarray*}
where those cohomology weighted partitions
\[\eta_i=\bigr\{(5-i,H^i),(1,Id)^i\bigr\},~~~i=0,1,2,3.\]

As for the two-point invariant
\[\Big\langle\Big\{\prod_{k_1=0}^3\bigr(k_1\tau+\iota_*(Id)\bigr)\Big\}\Big\{\prod_{k_2=0}^1\bigr(k_1\tau+\iota_*(Id)\bigr)\Big\}\Big\rangle_{2,1}^{\mathbb{P}^4},\]
it equals to
\[\sum_{i=0}^3(5-i)5^i\Big\langle\Big|\eta_i\Big\rangle_{2,1}^{\mathbb{P}^4,Q}+293N_{2,1}+\frac{157975}{36}.\]

Using the program GROWI \cite{Ga3} of Gathmann, we can compute that
\begin{eqnarray*}
\Big\langle\Big\{H^3\prod_{k=0}^1\bigr(k\tau+\iota_*(Id)\bigr)\Big\}\Big\rangle_{2,1}^{\mathbb{P}^4}&=&\frac{185}{1152},\\
\Big\langle\Big\{H^2\prod_{k=0}^2\bigr(k\tau+\iota_*(Id)\bigr)\Big\}\Big\rangle_{2,1}^{\mathbb{P}^4}&=&\frac{2885}{1152},\\
\Big\langle\Big\{H^1\prod_{k=0}^3\bigr(k\tau+\iota_*(Id)\bigr)\Big\}\Big\rangle_{2,1}^{\mathbb{P}^4}&=&\frac{28345}{1152},\\
\Big\langle\Big\{H^0\prod_{k=0}^4\bigr(k\tau+\iota_*(Id)\bigr)\Big\}\Big\rangle_{2,1}^{\mathbb{P}^4}&=&\frac{20125}{128},\\
\Big\langle\Big\{\prod_{k_1=0}^3\bigr(k_1\tau+\iota_*(Id)\bigr)\Big\}\Big\{\prod_{k_2=0}^1\bigr(k_1\tau+\iota_*(Id)\bigr)\Big\}\Big\rangle_{2,1}^{\mathbb{P}^4}&=&\frac{1592375}{576}.
\end{eqnarray*}

Now we take them into the above five equations, it is easy to compute that
\[N_{2,1}=\frac{2875}{240}.\]
The result agrees with the prediction of physicists Bershadsky et al. in \cite{BCOV1}\cite{BCOV2} which is recently proven by Guo et al. in \cite{GJR}.

\bibliographystyle{plain}

\begin{thebibliography}{10}

\bibitem{AGV}
Dan Abramovich, Tom Graber, and Angelo Vistoli.
\newblock Gromov-{W}itten theory of {D}eligne-{M}umford stacks.
\newblock {\em Amer. J. Math.}, 130(5):1337--1398, 2008.

\bibitem{BCOV1}
M.~Bershadsky, S.~Cecotti, H.~Ooguri, and C.~Vafa.
\newblock Holomorphic anomalies in topological field theories.
\newblock {\em Nuclear Phys. B}, 405(2-3):279--304, 1993.

\bibitem{BCOV2}
M.~Bershadsky, S.~Cecotti, H.~Ooguri, and C.~Vafa.
\newblock Kodaira-{S}pencer theory of gravity and exact results for quantum
  string amplitudes.
\newblock {\em Comm. Math. Phys.}, 165(2):311--427, 1994.

\bibitem{CLLL2}
Huai-Liang Chang, Jun Li, Wei-Ping Li, and Chiu-Chu~Melissa Liu.
\newblock An effective theory of {GW} and {FJRW} invariants of quintics
  {C}alabi-{Y}au manifolds.
\newblock arXiv:1603.06184.

\bibitem{CLLL1}
Huai-Liang Chang, Jun Li, Wei-Ping Li, and Chiu-Chu~Melissa Liu.
\newblock Mixed-{S}pin-{P} fields of {F}ermat quintic polynomials.
\newblock arXiv:1505.07532.

\bibitem{CG}
Tom Coates and Alexander Givental.
\newblock Quantum {R}iemann-{R}och, {L}efschetz and {S}erre.
\newblock {\em Ann. of Math. (2)}, 165(1):15--53, 2007.

\bibitem{EGH}
Y.~Eliashberg, A.~Givental, and H.~Hofer.
\newblock Introduction to symplectic field theory.
\newblock {\em Geom. Funct. Anal.}, (Special Volume, Part II):560--673, 2000.
\newblock GAFA 2000 (Tel Aviv, 1999).

\bibitem{FP1}
C.~Faber and R.~Pandharipande.
\newblock Hodge integrals and {G}romov-{W}itten theory.
\newblock {\em Invent. Math.}, 139(1):173--199, 2000.

\bibitem{FP2}
C.~Faber and R.~Pandharipande.
\newblock Relative maps and tautological classes.
\newblock {\em J. Eur. Math. Soc. (JEMS)}, 7(1):13--49, 2005.

\bibitem{Ga3}
A.~Gathmann.
\newblock {G}{R}{O}{W}{I}.
\newblock
  \url{http://www.mathematik.uni-kl.de/en/agag/members/professors/gathmann/software/growi/}.

\bibitem{Ga2}
A.~Gathmann.
\newblock Topology recursion relations and {G}romov-{W}itten invariants of
  higher genus.
\newblock arXiv:math/0305361v1.

\bibitem{Ga1}
A.~Gathmann.
\newblock {G}romov-{W}itten invariants of hypersurfaces.
\newblock Habilitation thesis, Univ. of Kaiserslautern, 2003.

\bibitem{Gi}
Alexander~B. Givental.
\newblock Semisimple {F}robenius structures at higher genus.
\newblock {\em Internat. Math. Res. Notices}, (23):1265--1286, 2001.

\bibitem{GP}
T.~Graber and R.~Pandharipande.
\newblock Localization of virtual classes.
\newblock {\em Invent. Math.}, 135(2):487--518, 1999.

\bibitem{GrVa}
Tom Graber and Ravi Vakil.
\newblock Relative virtual localization and vanishing of tautological classes
  on moduli spaces of curves.
\newblock {\em Duke Math. J.}, 130(1):1--37, 2005.

\bibitem{GJR}
Shuai Guo, Felix Janda, and Yongbin Ruan.
\newblock A mirror theorem for genus two {G}romov-{W}itten invariants of
  quintic threefolds.
\newblock arXiv:1709.07392.

\bibitem{HHKQ}
Weiqiang He, Jianxun Hu, Hua-Zhong Ke, and Xiaoxia Qi.
\newblock Blow up formulae of high genus {G}romov-{W}itten invariants in
  dimensional six.
\newblock arXiv:1402.4221.

\bibitem{IP1}
Eleny-Nicoleta Ionel and Thomas~H. Parker.
\newblock Gromov-{W}itten invariants of symplectic sums.
\newblock {\em Math. Res. Lett.}, 5(5):563--576, 1998.

\bibitem{IP2}
Eleny-Nicoleta Ionel and Thomas~H. Parker.
\newblock Relative {G}romov-{W}itten invariants.
\newblock {\em Ann. of Math. (2)}, 157(1):45--96, 2003.

\bibitem{Ka}
Eric Katz.
\newblock An algebraic formulation of symplectic field theory.
\newblock {\em J. Symplectic Geom.}, 5(4):385--437, 2007.

\bibitem{JI}
Joachim Kock and Israel Vainsencher.
\newblock {\em An invitation to quantum cohomology}, volume 249 of {\em
  Progress in Mathematics}.
\newblock Birkh\"auser Boston, Inc., Boston, MA, 2007.
\newblock Kontsevich's formula for rational plane curves.

\bibitem{LR}
An-Min Li and Yongbin Ruan.
\newblock Symplectic surgery and {G}romov-{W}itten invariants of {C}alabi-{Y}au
  3-folds.
\newblock {\em Invent. Math.}, 145(1):151--218, 2001.

\bibitem{Li1}
Jun Li.
\newblock Stable morphisms to singular schemes and relative stable morphisms.
\newblock {\em J. Differential Geom.}, 57(3):509--578, 2001.

\bibitem{Li2}
Jun Li.
\newblock A degeneration formula of {GW}-invariants.
\newblock {\em J. Differential Geom.}, 60(2):199--293, 2002.

\bibitem{Li3}
Jun Li.
\newblock Recent progress in {GW}-invariants of {C}alabi-{Y}au threefolds.
\newblock In {\em Current developments in mathematics, 2007}, pages 77--99.
  Int. Press, Somerville, MA, 2009.

\bibitem{Liu}
Chiu-Chu~Melissa Liu.
\newblock Gromov-{W}itten theory of {C}alabi-{Y}au 3-folds.
\newblock In {\em Proceedings of the {I}nternational {C}ongress of
  {M}athematicians. {V}olume {II}}, pages 497--512. Hindustan Book Agency, New
  Delhi, 2010.

\bibitem{MP}
D.~Maulik and R.~Pandharipande.
\newblock A topological view of {G}romov-{W}itten theory.
\newblock {\em Topology}, 45(5):887--918, 2006.

\bibitem{QF}
Feng Qu.
\newblock A remark on virtual pushforward properties in {G}romov-{W}itten
  theory.
\newblock {\em Manuscripta Math.}, 153(1-2):119--128, 2017.

\bibitem{WL}
Long-Ting Wu.
\newblock Chamber structure for some equivariant relative {G}romov-{W}itten
  invariants of $\mathbb{P}^1$ in genus $0$.
\newblock arXiv:1705.06018.

\end{thebibliography}

\end{document}